\DeclareRobustCommand{\rchi}{{\mathpalette\irchi\relax}}
\newcommand{\irchi}[2]{\raisebox{\depth}{$#1\chi$}} 
\DeclareMathOperator\ad{ad}
\DeclareMathOperator\Ad{Ad}
\DeclareMathOperator\ran{Ran}
\DeclareMathOperator\sinc{\mathrm{sinc}}
\DeclareMathOperator\dist{\mathrm{dist}}
\DeclareMathOperator\A{\mathcal{A}}
\DeclareMathOperator\C{\mathcal{C}}
\DeclareMathOperator\D{\mathcal{D}}
\DeclareMathOperator\U{\mathcal{U}}
\DeclareMathOperator\h{\mathcal{H}}
\DeclareMathOperator\Exp{Exp}
\DeclareMathOperator\si{{\mathfrak s}}
\DeclareMathOperator\sinhc{\mathrm{sinhc}}
\begin{document}

\newtheorem*{theorem*}{Theorem}
\newtheorem{teo}{Theorem}[section]
\newtheorem{prop}[teo]{Proposition}
\newtheorem{lem}[teo]{Lemma}
\newtheorem{coro}[teo]{Corollary}
\theoremstyle{definition}
\newtheorem{defi}[teo]{Definition}
\newtheorem{rem}[teo]{Remark}
\newtheorem{ejem}[teo]{Example}
\newtheorem{problem}[teo]{Problem}
\newtheorem{conj}[teo]{Conjecture}

\markboth{}{}

\makeatletter

\title[Conjugate points in the Grassmannian]{\vspace*{-2cm}Conjugate points in the Grassmann manifold of a $C^*$-algebra}
\date{}
\author{Esteban Andruchow}
\address{E. Andruchow: Instituto Argentino de Matem\'atica ``Alberto P. Calder\'on'' (CONICET), Saavedra 15 3º piso (1083) CABA, Argentina \& Instituto de Ciencias,  Universidad Nacional de Gral. Sarmiento}
\email{eandruch@ungs.edua.ar}

\author{Gabriel Larotonda}
\address{G. Larotonda:  Departamento de Matem\'atica, Facultad de Cs. Exactas y Naturales, Universidad de Buenos Aires. Avenida Cantilo s/n, Ciudad Universitaria, Pabell\'on I  (1428) CABA, Argentina  \& Instituto Argentino de Matem\'atica ``Alberto P. Calder\'on'' (CONICET), Saavedra 15 3º piso (1083) CABA, Argentina}
\email{glaroton@dm.uba.ar}

\author{L\'azaro Recht}
\address{L. Recht: Instituto Argentino de Matem\'atica  ``Alberto P. Calder\'on'' (CONICET), Saavedra 15 3º piso (1083) CABA, Argentina}
\email{lrecht@usb.ve}

\keywords{$C^*$-algebra, conjugate point, conjugate locus, connection, cut locus, exponential map, geodesic, Grassmannian, Lie group, Morse index, order of degeneracy}
\subjclass[2010]{Primary 58B20; Secondary  22E65, 53C22}

\makeatother

\begin{abstract}Let $Gr$ be a component of the Grassmann manifold of a $C^*$-algebra, presented as the unitary orbit of  a given orthogonal projection $Gr=Gr(P)$. There are several natural connections in this manifold, and we first show that they all agree (in the presence of a finite trace in $\mathcal A$, when we give $Gr$ the Riemannian metric induced by the Killing form, this is the Levi-Civita connection of the metric). We study the cut locus of $P\in Gr$ for the spectral rectifiable distance, and also the conjugate tangent locus of $P\in Gr$ along a geodesic. Furthermore, for each tangent vector $V$ at $P$, we compute the kernel of the differential of the exponential map of the connection. We exhibit examples where points that are tangent conjugate in the classical  setting, fail to be conjugate: in some cases they are not monoconjugate but epinconjugate, and in other cases they are not conjugate at all.
\end{abstract}

\maketitle

\setlength{\parindent}{0cm} 

\thispagestyle{empty}
\tableofcontents

\newpage 

\section{Introduction}

The Grassmann manifold $Gr_k(n)$ of $k$-planes in $n$-dimensional (real or complex) space is a source of explicit computations in the classical Riemannian geometry. There are several ways to introduce a Riemannian metric which are equivalent: geometrically, this can be done by means of the principal angles $\{\theta_1,\cdots,\theta_k\}\subset [0,\frac{\pi}{2}]$ among two supspaces $S_1,S_2\in Gr_k(n)$ (see the paper by Li et al. \cite{qiu} for a nice explanation and applications). The subspaces are equal if and only if all the principal angles are zero. This allows one to define the \textit{angular distance} as $\dist(S_1,S_2)^2=\sum_{i=1}^k \theta_i^2$. It can be shown that this distance comes from a Riemanian metric (see for instance \cite[Theorem 4]{wong}). The Levi-Civita connection of this metric is well-known and makes of $Gr_k(n)$ a Riemannian symmetric space \cite{helga}. Its geodesics, paralell transport and curvature were calculated in various ways. One can also present $Gr_k(n)$ as the unitary orbit of a fixed $k$-dimensional orthogonal projection $P\in M_n(\mathbb K)$, for $\mathbb K=\mathbb R$ or $\mathbb K=\mathbb C$. In this case, the tangent space at $P$ is identified with the subspace of symmetric matrices $X$  (Hermitian in the complex case) such that $X=XP+PX$. Therefore a Riemannian metric is available, by means of $\langle X,Y\rangle_P=\textrm{Re}\mathrm{Tr}(XY^*)$. As early as 1948, Dixmier \cite{dixmier} found out that if $\|P-Q\|<1$ then there exists a unique $x^*=-x$ such that $X=[x,P]$ is tangent at $P$, with $\|X\|=\|x\|<\pi/2$ and $e^xPe^{-x}=Q$ (Dixmier called this matrix $x$ a \textit{direct rotation}). The eigenvalues of this direct rotation are the canonical angles among the subspaces identified with $\ran P,\ran Q$ (see \cite{qiu}). For the case of $\|P-Q\|=1$ he further showed that there exists a unitary $U$ such that $UPU^*=Q$ if and only if 
$$
\dim(\ker(P)\cap \ran(Q))=\dim(\ran(P)\cap \ker(Q)).
$$
The results of Dixmier and other consequences are better explained in a paper by Davis \cite{davis}. For a tangent vector $X$ one has $\langle X,X\rangle_P=\sum_{i=1}^k |\lambda_i(X)|^2$ with $\lambda_i(X)$ the eigenvalues of $X$, and this is the tangent  Riemannian metric that gives the angular distance in $Gr_k(n)$ as mentioned above. For its Levi-Civita connection, the unique geodesic through $P$ with initial speed $V=[v,P]$ (here $v^*=-v$ is $P$-codiagonal also) is $\gamma(t)=e^{tv}Pe^{-tv}=\Exp_P(tV)$.

Any reasonable norm with the $\theta_i$ would induce a Finsler metric, and then a distance in the Grassmannian by taking the infima of the length of rectifiable paths joining given endpoints. In terms of separation of subspaces in the Grassmannian, the one that is always available and controls every other metric is the supremum distance, given by
$$
\dist_{\infty}(P,Q)=\max\theta_i=\|x\|_{\infty}=\max\{\|x\xi\|: \|\xi\|=1\}
$$
when $Q=e^xPe^{-x}$ and $x$ is a direct rotation. Equivalently, one can give the tangent space the spectral norm of the whole space of matrices. With respect to the affine connection introduced above, this Finsler norm has the nice feature that makes of paralell transport an isometry. In particular geodesics have constant speed $\|V\|_{\infty}$ (the same remark applies to any unitarily invariant norm).

One of the first problems to solve is the charaterization of the conjugate points to $Q$ along $\gamma$, and the tangent conjugate locus of $P$, which are those $V$ such that $D(\Exp_P)_V$ has nontrivial kernel. This was solved for the classical Grassmannians $Gr_k(n)$ in a wonderful paper by Sakai in the seventies \cite{sakai}. It follows the ideas in the paper of Crittenden \cite{critten}, where the method of proof is based on the presentation of the Grassmanian as a symmetric space of the compact type, and using the machinery of Cartan subalgebras and real root decompositions. See also the paper by Berceanu \cite{berce} for further explanation and history of these developments.

\smallskip

In an abstract $C^*$-algebra $\mathcal A$, we let $Gr(P_0)\subset \mathcal A$ be a component of the Grasmann manifold presented as the unitary orbit of a self-adjoint projection $P_0$. These tools of root decompositions are not available, but one can presume that being an homogeneous manifold of the unitary group, the ideas and techniques of operator theory will provide a respectable substitute, following the approach by Andruchow, Corach, Porta, Recht and others \cite{andru1,cpr,pr,prV}. This is what we try to accomplish here, and we not only recover the tangent conjugate locus at $P\in Gr(P_0)$ but for each tangent conjugate point $TV\in T_PGr(P_0)$ we also give a full description of the kernel of the differential of the exponential map $(D\Exp_P)_{TV}$. 

\smallskip

In what follows we describe the organization ot this paper and we exhibit the main results along the way. First in Section \ref{cg} we present the main definitions and recall some results. Then for $\gamma$  a path in $Gr(P_0)$ and $\mu$ a vector field in $Gr(P_0)$ along $\gamma$, we can consider in $Gr(P_0)$ these four affine connections:
\begin{enumerate}
\item Let $\Pi$ the projection onto $T_PGr(P_0)$ along diagonal skew-adkoint operators: then $D_t\mu=\Pi (\mu'(t))$ is the \textit{horizontal connection} 
\item Let $U_t\subset \mathcal A$ be a horizontal lift of $\gamma$, i.e. $U_t'=[\gamma_t',\gamma_t]U_t$. Then $D_t\mu=U_t (U_t^*\mu U_t)'U_t$ is the \textit{reductive connection}
\item For $P\in Gr(P_0)$ let $S_P:Gr(P_0)\to Gr(P_0)$ be $S_P(Q)=(2P-1)Q(2P-1)$, then this makes of $Gr(P_0)$ a symmetric space and there is a natural torsion free connection that can be derived, this is the \textit{symmetric space connection}
\item If the algebra $\mathcal A$ has a finite trace, let $\langle X,Y\rangle=\mathrm{Tr}(XY^*)$, which makes of $Gr(P_0)$ a weak Riemannian manifold. There exist a Levi-Civita connection for this metric, the \textit{metric connection}.
\end{enumerate}
For the classical Grassmannians $Gr_k(n)$ it is well-known that $(1)$ (considered by Kovarik in \cite{kov} in the context of Banach algebras) and $(4)$ are the same connection, and furthermore it is the same connection of $(3)$ albeit with a different presentation of symmetric spaces (see for instance \cite{critten}). We show that the reductive connection, introduced by Porta and Recht in \cite{prV} is also the same connection hence \textit{all four connections above are in fact different presentations of the same one} (this is done in Proposition \ref{equalcon}, Section \ref{LC} and Remark \ref{ssc}). In Section \ref{sjf} we obtain a closed formula for any Jacobi field along a geodesic, for this connection. In Section \ref{scl} we show first that if $\mathcal A$ has real rank zero, normal geodesics are not minimizing past $|t|=\pi/2$, and then we discuss uniqueness of geodesics. This allows us to prove that normal geodesics are not minimizing past $t=\pi/2$ if $P_0$ has finite rank or co-rank (Corallary \ref{corocl}). Then we move on to the main subject of this paper, where we study the differential of $\Exp_P$ to find the conjugate points to $P$ along $\gamma$.  In Section \ref{scpoints} we prove
\begin{theorem*}[A]
Let $P\in Gr(P_0)$ and let  $V\in T_PGr(P_0)$ of unit speed. If $Q$ is conjugate to $P$ along $\gamma$ then $Q=\gamma(T)$ with 
$$
T=T(k,s,s')=\frac{k\pi}{|s-s'|}\qquad k\in\mathbb Z^*,\qquad s\ne s'\in \sigma(V).
$$
\end{theorem*}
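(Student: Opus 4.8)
Write $V=[v,P]$ with $v^*=-v$ a $P$-codiagonal element (of unit norm), so that $\gamma(t)=\Exp_P(tV)=e^{tv}Pe^{-tv}$, and recall that the Banach--Lie algebra $\mathfrak u$ of skew-adjoint elements splits as $\mathfrak u=\mathfrak k\oplus\mathfrak m$ into $P$-diagonal and $P$-codiagonal parts, with $v\in\mathfrak m$, $\ad_v(\mathfrak k)\subset\mathfrak m$, $\ad_v(\mathfrak m)\subset\mathfrak k$ (so $\ad_v^2$ preserves each summand). By definition $\gamma(T)$ is conjugate to $P$ along $\gamma$ precisely when $(D\Exp_P)_{TV}$ has nontrivial kernel, so the plan is to compute that differential. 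Since $\Exp_P(W)=e^{W\varepsilon}Pe^{-W\varepsilon}$ with $\varepsilon=2P-1$, and $W\mapsto z:=W\varepsilon$ carries $T_PGr(P_0)$ linearly onto $\mathfrak m$, I differentiate this formula directly, using the standard expression for the derivative of the exponential, to get for $Z\in T_PGr(P_0)$ and $z=Z\varepsilon\in\mathfrak m$:
\[
(D\Exp_P)_{TV}(Z)=\tfrac1T\,e^{Tv}\bigl[\,\tfrac{1-e^{-T\ad_v}}{\ad_v}(z)\,,\,P\,\bigr]e^{-Tv}.
\]
(The same identity can be read off the closed Jacobi-field formula of Section~\ref{sjf}, via $(D\Exp_P)_{tV}(tW)=J_W(t)$ for the Jacobi field $J_W$ with $J_W(0)=0$, $(D_tJ_W)(0)=W$.)

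Next I would split the entire function $\zeta\mapsto\frac{1-e^{-T\zeta}}{\zeta}$ into its even and odd parts $\frac{\sinh(T\zeta)}{\zeta}+\frac{1-\cosh(T\zeta)}{\zeta}$. Applied to $z\in\mathfrak m$, the first summand is a function of $\ad_v^2$, hence stays in $\mathfrak m$, while the second is $\ad_v$ composed with a function of $\ad_v^2$, hence lands in $\mathfrak k$. Because $\mathfrak k\cap\mathfrak m=\{0\}$, the bracket with $P$ above vanishes iff its $\mathfrak m$-component does, i.e.
\[
Z\in\ker(D\Exp_P)_{TV}\iff \tfrac{\sinh(T\ad_v)}{\ad_v}(z)=0 .
\]
Thus $\gamma(T)$ is conjugate to $P$ along $\gamma$ iff the bounded operator $f(\ad_v)$, with $f(\zeta)=\sinh(T\zeta)/\zeta$, annihilates some nonzero element of $\mathfrak m\subset\mathcal A$.

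Now the spectral count. Decomposing $\ad_v=L_v-R_v$ (left minus right multiplication) in $\mathcal B(\mathcal A)$, these two operators are bounded, commute, and have $\sigma(L_v)=\sigma(R_v)=\sigma(v)\subset i\mathbb R$ (as $v^*=-v$); hence $\sigma(\ad_v)\subset\sigma(v)-\sigma(v)\subset i\mathbb R$, $f(\ad_v)$ is well defined by the holomorphic functional calculus ($f$ entire), and $\sigma(f(\ad_v))=f(\sigma(\ad_v))$ by spectral mapping. If $f(\ad_v)$ has nonzero kernel then $0\in\sigma(f(\ad_v))$, so $f(\zeta_0)=0$ for some $\zeta_0\in\sigma(\ad_v)$; since $f(0)=T\ne0$ we have $\zeta_0\ne0$, and $\sinh(T\zeta_0)=0$ forces $\zeta_0=i\pi k/T$ with $k\in\mathbb Z^*$. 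Writing $\zeta_0=\lambda-\mu$ with $\lambda,\mu\in\sigma(v)$ and using $\sigma(v)=i\,\sigma(V)$ — which follows from $v=V\varepsilon$ (whence $v^2=-V^2$ and $\sigma(v^2)=\{-s^2:s\in\sigma(V)\}$) together with the symmetries $\Ad(\varepsilon)v=-v$, $\Ad(\varepsilon)V=-V$ — one gets $\lambda=is$, $\mu=is'$ with $s,s'\in\sigma(V)$, so $s-s'=\pi k/T$; since $\zeta_0\ne0$, $s\ne s'$, and therefore $T=k\pi/|s-s'|$ with $k\in\mathbb Z^*$ and $s\ne s'$ in $\sigma(V)$, as claimed.

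The main obstacle, I expect, is not the geometry — the reduction to $f(\ad_v)$ is short — but the operator-theoretic bookkeeping in the last step: running the holomorphic functional calculus and the spectral mapping theorem for $\ad_v$ on the complex Banach space $\mathcal A$ while keeping track of the real subspaces $\mathfrak k,\mathfrak m$, and establishing the two ``folklore'' facts $\sigma(\ad_v)\subset\sigma(v)-\sigma(v)$ and $\sigma(v)=i\,\sigma(V)$ carefully in the $C^*$-setting (minding the role of $0$ and of continuous versus point spectrum). Note that as organized the argument gives only the stated necessary condition; deciding which of these $T$ are genuine conjugate points comes down to whether $-(\pi k/T)^2$ lies in the point spectrum of $\ad_v^2|_{\mathfrak m}$, a finer matter left for later.
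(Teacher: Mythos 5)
Your proof follows essentially the same route as the paper's Lemma \ref{fcp}: compute $D(\Exp_P)_{TV}$ via the derivative of the Lie-group exponential, use the $\mathfrak k\oplus\mathfrak m$ grading to reduce invertibility to that of $\sinhc(T\ad v)$ on $\widetilde{\mathcal C}_P$, then locate $0$ in its spectrum via $\sigma(\ad v)\subset\{s-s':s,s'\in\sigma(v)\}$ and $\sigma(v)=i\,\sigma(V)$. The one presentational difference --- you invoke the spectral mapping theorem for the single entire function $\zeta\mapsto\sinh(T\zeta)/\zeta$, whereas the paper first Weierstrass-factors $\sinhc$ in Remark \ref{facto} so that the same building blocks can be reused in the later kernel computations --- is cosmetic.

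One inaccuracy to repair: you open with ``$\gamma(T)$ is conjugate to $P$ precisely when $(D\Exp_P)_{TV}$ has nontrivial kernel,'' but in this paper (and for Theorem (A) as stated) a conjugate point is one where $D(\Exp_P)_{TV}$ fails to be \emph{invertible}; nontrivial kernel is the \emph{monoconjugate} case, and epiconjugate points are also in scope. This costs you nothing: replace ``has nontrivial kernel'' by ``is not invertible'' throughout, and the spectral step still applies, since $0$ lies in the complex spectrum of $\sinhc(T\ad v)$ on $\mathcal A$ the moment the restriction to $\widetilde{\mathcal C}_P$ fails to be invertible for any reason (Remark \ref{complexifi} handles the real-to-complex passage, and $\mathcal A=\mathcal C_P\oplus\mathcal D_P$ is a $\sinhc(T\ad v)$-invariant block decomposition, so the spectrum of the restriction is contained in the full spectrum). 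The ``folklore facts'' you flag at the end are exactly what Remarks \ref{complexifi} and \ref{speo} supply: the inclusion $\sigma(\ad v)\subset\sigma(v)-\sigma(v)$ holds in any complex Banach algebra (Lumer--Rosenblum), and your commuting $L_v-R_v$ argument gives it directly as well.
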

This is shown in Lemma \ref{fcp} and the remarks before it. Now let $V=U|V|$ be the polar decomposition of $V$ in the enveloping von Neumann algebra of $\mathcal A$. Let 
$$
\lambda=(1-P)VP,\quad  \Omega=(1-P)UP.
$$
Let $P_{|\lambda|}$ stand for the projection onto the closure of the range of $\lambda$ and consider the $C^*$-algebra $\mathcal A_0=P_{|\lambda|}\mathcal A P_{|\lambda|}$. For the first conjugate point we obtain:
\begin{theorem*}[B] Let $V$ be a unit length tangent vector at $P\in Gr(P_0)$. Then the kernel of $D(\Exp_P)_{\frac{\pi}{2}V}$ at the first tangent conjugate point $Q=\gamma(\frac{\pi}{2})$ is
$$
\mathcal S=\left\{ \Omega z -z\Omega^*:\,z^*=-z\in \mathcal A_0\textrm{ and }|\lambda|z=z\right\}.
$$
If $Q$ is not monoconjugate to $P$, then it is epiconjugate to $P$.

If $d$ is the real dimension of the fixed point set of $V^2$ in any faithful representation of $\mathcal A$,  then for the complex Grassmannian we have that the dimension of $\mathcal S$ is $d^2$, while for the real Grassmanian it is $\frac{d^2-d}{2}$.
\end{theorem*}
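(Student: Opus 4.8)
The plan is to reduce everything to the Jacobi operator of the connection along $\gamma$. By the closed formula of Section \ref{sjf} (equivalently, from $(D\Exp_P)_{tV}(W)=[h(\ad_{tv})([W,P]),\gamma(t)]$ with $v=[V,P]$, $h(x)=(e^x-1)/x$, and the fact that $[W,P]$ is $P$-codiagonal, which collapses this to the even part $\sinhc$), a Jacobi field vanishing at $t=0$ is, after the natural trivialization along $\gamma$, of the form $t\,\sinc\!\big(t\sqrt{\ad_V^2}\,\big)W$, where $W\mapsto[V,[V,W]]=\ad_V^2 W$ is the (positive) Jacobi operator on $T_PGr(P_0)$ and $\|\ad_V\|\le 2\|V\|_\infty=2$. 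Hence $W\in\ker (D\Exp_P)_{TV}$ exactly when $W$ lies in the span of the spectral subspaces of $\ad_V^2$ for eigenvalues $(k\pi/T)^2$, $k\in\mathbb Z^*$; this reproves Theorem A, and at $T=\tfrac\pi2$ leaves only the eigenvalue $4$ (since $\ad_V^2\le 4$), so the first step is to record $\ker (D\Exp_P)_{\frac\pi2 V}=\ker(\ad_V^2-4)\subset T_PGr(P_0)$.

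Next I would compute this $4$-eigenspace via the polar decomposition. Writing $V=\lambda+\lambda^*$ in the $P$-block decomposition, $\ad_V^2(W)$ acts on the off-diagonal part $c=(1-P)WP$ of $W$ by $c\mapsto\lambda\lambda^*c+c\lambda^*\lambda-2\lambda c^*\lambda$. For $W=\Omega z-z\Omega^*$ with $z^*=-z\in\mathcal{A}_0$ and $|\lambda|z=z$ (hence also $z|\lambda|=z$) one has $c=\Omega z$, and using $\lambda=\Omega|\lambda|$, $\Omega^*\Omega=P_{|\lambda|}$ one checks $\lambda\lambda^*c=c$, $c\lambda^*\lambda=c$, $\lambda c^*\lambda=-c$, so $\ad_V^2(W)=4W$ and $\mathcal S\subseteq\ker(\ad_V^2-4)$. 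For the reverse inclusion I would diagonalize: in the enveloping von Neumann algebra (complexifying in the real case) the $\pm i$-eigenspaces of the skew element $v$ are spanned by $\xi\mp i\,\Omega\xi$ with $\xi$ in the $1$-eigenspace $E_1$ of $|\lambda|$; expanding a kernel element along these and imposing that $[W,P]$ be $P$-codiagonal and skew forces $W$ into the stated form with $z$ skew-adjoint in $E_1\mathcal{A}E_1$, i.e. $z^*=-z\in\mathcal{A}_0$ with $|\lambda|z=z$. This gives $\ker (D\Exp_P)_{\frac\pi2 V}=\mathcal S$. The map $z\mapsto\Omega z-z\Omega^*$ is injective ($\Omega^*(\Omega z-z\Omega^*)=z$, since $\Omega^*\Omega|_{\ran E_1}=\mathrm{id}$ and $\Omega^*z=0$), so $\mathcal S$ is linearly isomorphic to the space of skew-adjoint elements of $E_1\mathcal{A}E_1$; in a faithful representation where this corner is the full matrix algebra on the $d$-dimensional eigenspace $E_1$ (the relevant part of the fixed point set of $V^2$), that space has real dimension $d^2$ for $\mathbb K=\mathbb C$ and $\tfrac{d^2-d}{2}$ for $\mathbb K=\mathbb R$, which is the dimension count.

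For the monoconjugate/epiconjugate dichotomy: after the trivialization and the isometry $W\mapsto[W,P]$, $(D\Exp_P)_{\frac\pi2 V}$ is unitarily equivalent to $F(\ad_V^2)$ on $T_PGr(P_0)$, where $F(s)=\sinc(\tfrac\pi2\sqrt{s})$ is continuous, strictly positive on $[0,4)$ and $F(4)=0$. This operator is self-adjoint, invertible iff $4\notin\sigma(\ad_V^2)$, and injective iff $4$ is not an eigenvalue. Thus if $Q$ is conjugate but not monoconjugate, then $4\in\sigma(\ad_V^2)\setminus\sigma_p(\ad_V^2)$, the operator is injective with dense non-closed range, and $Q$ is epiconjugate; such situations do occur because the explicit eigenvectors $\Omega z-z\Omega^*$ with $z$ skew in the $\mu$-eigenspace of $|\lambda|$ give $\ad_V^2$ the eigenvalue $4\mu^2$, so if $1\in\sigma(|\lambda|)$ is only approached by eigenvalues (or, in the real case, has a one-dimensional eigenspace) then $4$ sits in the continuous spectrum of $\ad_V^2$ while $\mathcal S=\{0\}$.

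The main obstacle I expect is the reverse inclusion $\ker(\ad_V^2-4)\subseteq\mathcal S$: in the $C^*$-setting $|\lambda|$ need not have pure point spectrum, so one cannot simply diagonalize, and the argument that the extremal eigenvalue $4$ of $\ad_V^2$ forces $c$ to be supported on the $1$-eigenspace of $|\lambda|$ and to factor as $\Omega z$ with $z$ skew requires an operator-norm rigidity analysis of the equation $\lambda\lambda^*c+c\lambda^*\lambda-2\lambda c^*\lambda=4c$ (or a careful passage to the bidual handling $E_1=\chi_{\{1\}}(|\lambda|)$ directly). Keeping straight the distinction between $\ad_V$ (the Jacobi operator, whose $4$-eigenspace is the answer) and $\ad_v$ (whose $\pm 2i$-eigenspaces host the lifts $[W,P]$), together with the real-versus-complex scalar bookkeeping, is where the technical weight lies.
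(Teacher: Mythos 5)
Your reduction of the kernel to $\ker(\ad_V^2 - 4)$ (via $\sinhc(\tfrac{\pi}{2}\ad_v)$ and the Weierstrass factorization) is correct and agrees with what the paper does in Remark \ref{difexp}, Remark \ref{facto} and Remark \ref{ftcp}. The direct verification that $\mathcal S\subseteq\ker(\ad_V^2-4)$ is also fine. But the proposal has two genuine gaps, both of which you partially flag yourself.

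First, the reverse inclusion $\ker(\ad_V^2-4)\subseteq\mathcal S$ is announced but not actually carried out, and the sketch you offer (expanding along the $\pm i$-eigenspaces of $v$ in the bidual) will not go through as stated when $|\lambda|$ has no eigenvalue at $1$, precisely the situation that makes the problem interesting. The paper avoids this by first splitting $\mathcal A_{sk}$ into $P_v\mathcal AP_v\oplus\widetilde{\mathcal C}_{P_v}\oplus(1-P_v)\mathcal A(1-P_v)$ (Proposition \ref{bloques}), checking that at $T=\pi/2$ there can be no kernel in the $P_v$-codiagonal block (Remark \ref{pvc}.iv, which your approach does not address), and then solving, inside the corner $\mathcal A_0=P_{|\lambda|}\mathcal AP_{|\lambda|}$, the equation $(L_{|\lambda|}+R_{|\lambda|}-2)b=0$ for skew-adjoint $b$. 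The core of the reverse inclusion is Lemma \ref{muu2}$.g)$: from the polar decomposition $b=w|b|$ one derives $w|\lambda|w^*=P_{|b|}(2-|\lambda|)$, compares spectra ($[0,1]$ versus $\{0\}\cup[1,2]$) to conclude the element is a projection $p\le P_{|b|}$, and then uses $0\le P_{|b|}-p=-(1-|\lambda|)P_{|b|}\le 0$ to force $|\lambda|b=b$. That rigidity argument is exactly the missing piece.

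Second, the epiconjugate dichotomy. You argue by treating $F(\ad_V^2)$ as a self-adjoint operator and reading off monoconjugacy and epiconjugacy from the spectral theorem; but without a trace $T_PGr(P_0)$ has no Hilbert-space structure, so ``self-adjoint'' has no meaning here, and you never actually establish the underlying claim that $4\in\sigma(\ad_V^2)$ in the first place (your proof says only that ``such situations do occur,'' giving examples rather than a proof). The content of the claim ``if $Q$ is not monoconjugate it is epiconjugate'' is precisely that $\gamma(\pi/2)$ is \emph{always} conjugate, i.e.\ that $L+R-2$ is never invertible on $\mathcal A_0$. The paper proves this with a Banach-algebra numerical-range argument: $L+R$ is Hermitian in the sense that $\|e^{is(L+R)}\|=1$, hence $V(L+R)=\mathrm{co}\,\sigma(L+R)$, while a state $\varphi$ with $\varphi(|\lambda|)=1$ puts $2\in W(L+R)\subset V(L+R)$. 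This replaces the Hilbert-space spectral argument you invoke and works without any trace assumption. Once $4\in\sigma$ is known, ``not monoconjugate implies epiconjugate'' is immediate; but that step is not the hard part.

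Your dimension count is essentially the paper's (Corollary \ref{dimensiond}), and your remark on real versus complex bookkeeping is apt. In summary: the approach is sound in outline and matches the paper's reduction of the kernel to a spectral condition, but the two hard analytic steps --- the rigidity argument identifying $\ker(L+R-2)|_{sk}$ and the numerical-range argument showing $L+R-2$ is never invertible --- are exactly the ones you leave open, and they are the substance of the proof.
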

Theorem (B) is contained in Theorem \ref{t1cp} and Corollary \ref{dimensiond}. We show an example where this $Q$ is not monoconjugate but epiconjugate (Example \ref{noesmono}), and another one where it is both (Example \ref{eslasdos}). For other candidates $Q=\gamma(T)$, the situation is different, since they might not be conjugated to $P$. However in Theorem \ref{mononcon} we obtain:
\begin{theorem*}[C] Let $T=T(k,s,s')$, let $\mu_j=\frac{j}{|k|}|s-s'|$ and 
$$
\Lambda=\{j\in\mathbb N: \exists s_1\ne s_2\in\sigma(V)\textrm{ with } j |s-s'|=|k||s_1-s_2|\},
$$
let 
$$
\mathfrak H=\oplus_{j\in\Lambda}\ker((L-R)^2-\mu_j^2)\big|_{(\mathcal A_0)_h},\qquad \mathfrak K=\oplus_{j\in\Lambda}\ker(L+R-\mu_j)\big|_{(\mathcal A_0)_{sk}}.
$$
Then $\ker(DExp_P)_{TV}=\mathcal S\oplus\mathcal T$, where
$$
\mathcal S=\left\{ \Omega (a+b) +(a-b)\Omega^*    : \,a\in\mathfrak H,\, b\in \mathfrak K\right\}
$$
and 
$$
\mathcal T=\{x=P_vx+xP_v\in T_PGr(P_0): \sinhc(TV)x=xP_v\}.
$$
\end{theorem*}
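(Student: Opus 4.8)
The plan is to obtain $(D\Exp_P)_{TV}$ by differentiating the conjugation defining $\Exp_P$ (which also matches the Jacobi field formula of Section~\ref{sjf}) and then convert the vanishing condition into a spectral problem. Fix the $P$-codiagonal skew-adjoint lift $v$ of $V$, so that $\gamma(t)=e^{tv}Pe^{-tv}$, and for $W\in T_PGr(P_0)$ let $w$ be its unique $P$-codiagonal skew-adjoint representative, $W=[w,P]$. Since this lift is linear, $\Exp_P(TV+sW)=e^{Tv+sw}Pe^{-(Tv+sw)}$, and differentiating at $s=0$ gives
\begin{equation*}
(D\Exp_P)_{TV}(W)=e^{Tv}\Bigl[\,\tfrac{1-e^{-T\ad_v}}{T\ad_v}(w)\,,\,P\,\Bigr]e^{-Tv},
\end{equation*}
so $W$ lies in the kernel exactly when $\phi:=\frac{1-e^{-T\ad_v}}{T\ad_v}(w)$ commutes with $P$. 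Because $\ad_v$ interchanges $P$-diagonal and $P$-codiagonal elements and $w$ is $P$-codiagonal, $\phi$ splits as its even-in-$\ad_v$ part ($P$-codiagonal) plus its odd part ($P$-diagonal); the even part of $\frac{1-e^{-x}}{x}$ being $\sinhc(x)$, we conclude that $W\in\ker(D\Exp_P)_{TV}$ iff $\sinhc(T\ad_v)(w)=0$.

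The key computational step is the identity $\ad_v^{2}=-(L_V+R_V)^{2}$ on the space of $P$-codiagonal elements, where $L_V,R_V$ are left and right multiplication by $V$; this is a short bracket computation once one uses $v=V(2P-1)$ (up to sign) and that $P$-codiagonal elements anticommute with $2P-1$. Consequently $\sinhc(T\ad_v)(w)=\sinc\bigl(T(L_V+R_V)\bigr)(w)$ on codiagonals, and the kernel is the set of $P$-codiagonal skew-adjoint $w$ annihilated by $\sinc\bigl(T(L_V+R_V)\bigr)$. Since $(L_V+R_V)^{2}$ acts as the scalar $(s_1+s_2)^{2}$ on the block joining the $s_1$- and $s_2$-spectral subspaces of $V$, this kernel is the sum of the blocks with $s_1+s_2\ne 0$ and $\sin\bigl(T(s_1+s_2)\bigr)=0$, i.e. $|s_1+s_2|\in\frac{\pi}{T}\mathbb N$; given $T=T(k,s,s')$ and the symmetry of $\sigma(V)$ about $0$, these are precisely the values $\mu_j=\frac{j}{|k|}|s-s'|$ with $j\in\Lambda$, realised either as $\bigl||s_1|-|s_2|\bigr|$ or as $|s_1|+|s_2|$.

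It remains to put this in the intrinsic form of the statement. Decompose $w$ through the polar decomposition $V=U|V|$ taken in the enveloping von Neumann algebra. The part of $w$ adapted to the phase $\Omega=(1-P)UP$ is parametrised by elements of $\mathcal A_0=P_{|\lambda|}\mathcal A P_{|\lambda|}$, and on it $\ad_v^{2}$ decouples: it acts as $-(L-R)^{2}$ on the self-adjoint parameters and as $-(L+R)^{2}$ on the skew ones, with $L=L_{|\lambda|}$, $R=R_{|\lambda|}$. The eigenvalue $-\mu_j^{2}$ then forces the self-adjoint parameter into $\ker\bigl((L-R)^{2}-\mu_j^{2}\bigr)$ and the skew parameter into $\ker(L+R-\mu_j)$, i.e. into $\mathfrak H$ and $\mathfrak K$, and such a $w$ corresponds to the tangent vector $\Omega(a+b)+(a-b)\Omega^{*}$; this gives the summand $\mathcal S$. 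The remaining component of $w$ — the one meeting $\ker V$, which cannot be absorbed into the phase — is governed by $V$ alone, and there the condition $\sinhc(T\ad_v)(w)=0$ becomes $\sinhc(TV)x=xP_v$ with $P_v$ the support projection of $V$; this gives $\mathcal T$. The sum $\mathcal S\oplus\mathcal T$ is internal because the two families come from orthogonal spectral subspaces of $\ad_v^{2}$. For the real Grassmannian the argument is identical, the real-structure constraint being carried along in $w$, $\mathcal A_0$, $\mathfrak H$, $\mathfrak K$.

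The identity $\ad_v^{2}=-(L_V+R_V)^{2}$ and the reduction to $\sinc\bigl(T(L_V+R_V)\bigr)$ are routine; the real difficulty is the analytic bookkeeping. When $V$ has continuous spectrum, $|\lambda|$ need not be bounded below, the block decompositions above are not norm-closed, and the relevant spectral projections live only in the enveloping von Neumann algebra. The recurring point is therefore to verify that the candidate kernel elements $\Omega(a+b)+(a-b)\Omega^{*}$ and the elements of $\mathcal T$ genuinely lie in $T_PGr(P_0)$ — that is, belong to $\mathcal A$ and are $P$-codiagonal — and that $\mathcal S\cap\mathcal T=\{0\}$ with $\mathcal S+\mathcal T$ equal to the whole kernel; this, together with the dimension count in the style of Corollary~\ref{dimensiond}, is where most of the work sits.
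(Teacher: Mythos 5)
Your opening identity $\ad_v^{2}=-(L_V+R_V)^{2}$ on $P$-codiagonal operators is correct and is a genuinely clean way to package the paper's Weierstrass factorization: it converts $\sinhc(T\ad_v)$ into $\sinc\bigl(T(L_V+R_V)\bigr)$ in one step, which is a nice alternative to the block-by-block computation in Proposition~\ref{bloques}. After that, however, you converge onto the same reduction the paper uses (polar decomposition of the speed, passage to $\mathcal A_0=P_{|\lambda|}\mathcal A P_{|\lambda|}$, splitting into self-adjoint and skew parameters) and that is exactly where the real content of the theorem lives, and exactly where your argument stops being a proof and becomes a plan.

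There are three concrete gaps. First, the claim that $\mathfrak H$ and $\mathfrak K$ are the \emph{direct sums} over $j\in\Lambda$ of the kernels of the individual factors is not automatic: since the factors $(L-R)^2-\mu_j^2$, $L+R-\mu_j$ are not invertible, one needs a Bezout-type argument (polynomials $p_k$ with $\sum_k p_k(t)\prod_{l\ne k}(t-\mu_{j_l})=1$, applied via functional calculus to $L\pm R$) to see that the kernel of the product is the sum of the kernels, and that the sum is direct; this is Lemma~\ref{muu2}(b) and you do not address it. Second, you slide from the operator $(L+R)^2-\mu_j^2$ acting on skew parameters to $\ker(L+R-\mu_j)$, which silently discards the factor $L+R+\mu_j$; this requires the spectral estimate $\sigma(L+R)\subset[0,2]$ and $\mu_j>0$ so that $L+R+\mu_j$ is invertible (Lemma~\ref{muu2}(a)), together with the complexification trick of Remark~\ref{complexifi} to pass to real subspaces. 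Third, the description of $\mathcal T$ as the set of $P_v$-codiagonal $x$ with $\sinhc(TV)x=xP_v$ is asserted, not derived: the operator on $\widetilde{\mathcal C}_{P_v}$ is actually $L_{\sinhc(Tv)}+R_{\sinhc(Tv)}-1$, so the kernel condition is $Sx+xS=x$, and the reduction to $Sx=xP_v$ is a separate short argument (Remark~\ref{pvc}(ii)) that needs to be carried out. Your final paragraph, in which you acknowledge that verifying membership in $T_PGr(P_0)$, directness of $\mathcal S\oplus\mathcal T$, and the dimension count ``is where most of the work sits,'' is an accurate diagnosis — but that work is the theorem, and it is missing.
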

In Lemma \ref{muu2} we give sharp criteria to discard most of the direct summands in $\mathfrak H,\mathfrak K$. With these tools, in Section \ref{spc} we give a full characterization of the kernel at all conjugate points in the real and complex projective spaces (the orbit of a one-dimensional projection); this is Theorem \ref{losproye}. In Section \ref{sbc}, we study the points past the first tantent conjugate point. In the classical Grassmannians $Gr_k(n)$,  along a unit speed geodesic the point $Q=\gamma(T)$ for $T=(k,s,s')$ as above is always conjugate to $P$: more in general, we show in Lemma \ref{prime} that if $\mathcal A_0$ is a prime $C^*$-algebra or a von Neumann factor, then each $Q=\gamma(T)$ is either monogonjugate or epiconjugate to $P$. We end the paper with an elementary finite dimensional example that shows that when the center of $\mathcal A$ is not trivial, then most candidates to conjugate points (except for the first one) are in fact not conjugate to $P$ (Example \ref{pocos}).

\smallskip

\subsection*{Acknowledgements} This research was supported by Consejo Nacional de Investigaciones Cient\'\i ficas y T\'ecnicas (CONICET), Agencia Nacional de Promoci\'on de Ciencia y Tecnolog\'\i a (ANPCyT), and Universidad de Buenos Aires (UBA), Argentina.

\section{Connections, geodesics and Jacobi fields in the Grassmannian}\label{cg}

We will denote $\U_{\mathcal A}\subset \A$ the unitary group of the $C^*$-algebra $\A$. We shall denote by $\A_h\subset\A$ the real subspace  of self-adjoint (or Hermitian) elements of $\A$. Then $\A_{sk}=\mathrm{Lie}(\U)$ denotes the Lie algebra of the unitary group of $\A$, which consists of the skew-adjoint (i.e. anti-Hermitian) elements of $\A$. When $\mathcal A=\mathcal B(\mathcal H)$ is the algebra of bounded operators on a Hilbert space, one can identify a subspace $\mathcal S\subset \mathcal H$ with the unique orthogonal projection $P_{\mathcal S}\in \mathcal A$ such that $\ran(P_{\mathcal S})=\mathcal S$. After a motion by means of the unitary group $\mathcal S'=U(\mathcal S)$, the projections are transformed accordingly to $P_{\mathcal S'}=UP_{\mathcal S}U^*$. Thus we will consider the components of the Grassmann manifold, presented as unitary orbits of a fixed projection:
\begin{defi}The Grassmannian of the $C^*$-algebra $\A$ is the set
$$
Gr(\A)=\{P\in\A: P^2=P^*=P\}.
$$
\end{defi}
One advantage of this viewpoint of the Grassmannian is that it enables one to regard  $Gr(\A)$ as a subset of the  Banach space $\A$. Most of the facts concerning this viewpoint were presented in \cite{cpr,pr,prV}.  One sees the benefit of regarding subspaces as projections, when proving that this action just defined is locally transitive; this fact is well known (for instance \cite{kov,kato,halmos,davis,cpr,ass} etc.):
\begin{lem}\label{section}
Let $P,Q\in Gr(\h)$ such that $\|P-Q\|<1$. Then there exists $U=U(P,Q)\in\U_{\A}$ depending smoothy on $P,Q$ such that  $UPU^*=Q$. \end{lem}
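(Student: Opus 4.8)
The plan is to exhibit $U$ explicitly as a \emph{direct rotation} in the spirit of Dixmier. Put
$$
S = S(P,Q) = QP + (1-Q)(1-P)\in\A,
$$
a polynomial, hence real-analytic, function of the pair $(P,Q)$. A one-line computation gives $QS = QP = SP$, so that whenever $S$ is invertible we get $Q = SPS^{-1}$. Of course $S$ need not be unitary, so I would pass to the polar decomposition $S = U|S|$ with $|S| = (S^{*}S)^{1/2}$, and the point will be that $|S|$ commutes with $P$; granting this, the unitary factor $U = S|S|^{-1}$ satisfies
$$
UPU^{*} = U|S|\,P\,|S|^{-1}U^{*} = S P S^{-1} = Q.
$$
Since on positive invertible elements the map $A\mapsto A^{-1/2}$ is analytic (holomorphic functional calculus), $U = S(S^{*}S)^{-1/2}$ then depends real-analytically on $(P,Q)$ on the open set $\{\|P-Q\|<1\}$, with $U(P,P)=1$.

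Two facts must be verified. For the invertibility of $S$, the key identity is
$$
S^{*}S = PQP + (1-P)(1-Q)(1-P) = 1 - \bigl(P(1-Q)P + (1-P)Q(1-P)\bigr),
$$
obtained by expanding the product and using $PQP = P - P(1-Q)P$ and $(1-P)(1-Q)(1-P) = (1-P) - (1-P)Q(1-P)$. Writing $(1-Q)P = (1-Q)(P-Q)$ one gets $\|P(1-Q)P\| = \|(1-Q)P\|^{2}\le\|P-Q\|^{2}$, and symmetrically $\|(1-P)Q(1-P)\|\le\|P-Q\|^{2}$; moreover $P(1-Q)P$ lies in the corner $P\A P$ while $(1-P)Q(1-P)$ lies in $(1-P)\A(1-P)$, so in any faithful representation these two positive operators act on the mutually orthogonal subspaces $\ran P$ and $\ker P$, whence the norm of their sum is the maximum of the two and hence $\le\|P-Q\|^{2}<1$. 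Thus $S^{*}S\ge(1-\|P-Q\|^{2})\,1>0$ is invertible, and the same computation for $SS^{*}$ makes $S$ invertible. For the commutation, the identity above shows $S^{*}S$ commutes with $P$ (the summand $P(1-Q)P\in P\A P$ commutes with $P$, and $P$ annihilates $(1-P)Q(1-P)$ on both sides), so $|S| = (S^{*}S)^{1/2}$ commutes with $P$ by continuous functional calculus.

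The only genuinely delicate step is the norm estimate — specifically the observation that $P(1-Q)P$ and $(1-P)Q(1-P)$ are supported on orthogonal subspaces, which is what lets the hypothesis $\|P-Q\|<1$ (rather than merely $\|P-Q\|<1/2$) suffice; once that is in place, everything else is routine bookkeeping with the polar decomposition and the analyticity of functional calculus. I would also note in passing that $S(Q,P) = S(P,Q)^{*}$, so that $U(Q,P) = U(P,Q)^{*}$, a convenient symmetry although it is not required for the statement.
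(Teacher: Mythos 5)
Your proof is correct and is essentially the classical ``direct rotation'' argument found in the very references the paper cites for this lemma (Kato, Halmos, Davis); the paper itself gives no proof and simply defers to those sources. Your norm estimate (using that $P(1-Q)P$ and $(1-P)Q(1-P)$ live in complementary corners so their sum has norm $\max$ rather than the sum, giving the sharp bound $\|S^*S-1\|\le\|P-Q\|^2$) and the analyticity of $(P,Q)\mapsto S(S^*S)^{-1/2}$ on $\{\|P-Q\|<1\}$ are exactly what the cited sources establish.
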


Note the fact that for any pair $P,Q$ of projections, one always has $\|P-Q\|\le 1$. It follows that if we fix $P$, the set of $Q$ which are conjugate with $P$ contains an open dense subset of $Gr(\A)$, namely, $\{Q\in Gr(\A): \|P-Q\|<1\}$. 

\begin{defi} We will denote by $Gr(P_0)\subset Gr(\A)$ the orbit for the action of the unitary group of $\A$, of a fixed orthogonal projection $P_0^2=P_0^*=P_0\in\A$. We will further assume throughout the paper that $P_0$ is not a central projection, to avoid the trivial space.
\end{defi}

The following result is well-known, see for instance \cite{prV}:
\begin{teo}
$Gr(P_0)$ is a $C^\infty$ differentiable complemented submanifold of $\A_h$. For any fixed $P\in Gr(P_0)$, the orbit map $\pi_{P}:\U_{\mathcal A}\to Gr(P_0)$ given by $\pi_{P}(U)=UPU^*$ is a $C^\infty$ submersion.
 \end{teo}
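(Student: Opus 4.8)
The plan is to realize $Gr(P_0)$ near a fixed $P$ through the exponential chart $X\mapsto e^XPe^{-X}$ on the ``off-diagonal'' skew-adjoint elements, and to feed in Lemma \ref{section} (in its sharp, codiagonal form: Dixmier's direct rotation, recalled in the Introduction) to identify the image of this chart with $Gr(P_0)$. First I would decompose $\A_h=\mathcal{D}_P\oplus\mathcal{D}_P^{\perp}$, where $E_P(X)=PXP+(1-P)X(1-P)$ is a bounded idempotent, $\mathcal{D}_P=\ran E_P=\{Z\in\A_h:Z=PZP+(1-P)Z(1-P)\}$ (the $P$-diagonal part) and $\mathcal{D}_P^{\perp}=\ker E_P=\{Y\in\A_h:PYP=(1-P)Y(1-P)=0\}$ (the $P$-codiagonal part); both are closed and complemented. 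The analogous splitting $\A_{sk}=\mathfrak{k}_P\oplus\mathfrak{m}_P$, with $\mathfrak{k}_P$ the diagonal skew-adjoint elements (which is exactly $\lie(\mathcal{I}_P)$ for the isotropy group $\mathcal{I}_P=\{U\in\U_{\A}:UP=PU\}=\U_{P\A P}\times\U_{(1-P)\A(1-P)}$) and $\mathfrak{m}_P$ the codiagonal ones, is the structural heart of the matter: $\mathfrak{k}_P$ is complemented in $\A_{sk}$, and this is what lets the whole Banach--Lie homogeneous-space machinery run.

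Next I would study $\psi_P\colon\mathfrak{m}_P\to\A_h$, $\psi_P(X)=e^XPe^{-X}$. It is $C^\infty$, its image lies in $Gr(P_0)$, and $D(\psi_P)_0(X)=[X,P]$. A one-line computation shows $X\mapsto[X,P]$ is a bounded linear isomorphism of $\mathfrak{m}_P$ onto $\mathcal{D}_P^{\perp}$ (injective: a codiagonal element commuting with $P$ is $0$; surjective: $Y\in\mathcal{D}_P^{\perp}$ is the image of $X=(1-P)YP-PY(1-P)\in\mathfrak{m}_P$), so $D(\psi_P)_0$ is a topological isomorphism onto the complemented subspace $\mathcal{D}_P^{\perp}$. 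By the inverse function theorem in Banach spaces, on a small ball $B\subset\mathfrak{m}_P$ the map $\psi_P$ is a $C^\infty$ diffeomorphism onto a $C^\infty$ submanifold $\mathcal{M}_P\subset\A_h$ through $P$ with $T_P\mathcal{M}_P=\mathcal{D}_P^{\perp}$; that this submanifold is \emph{complemented} follows by enlarging $\psi_P$ to the local chart $(X,Z)\mapsto e^X(P+Z)e^{-X}$ of $\A_h$ near $P$, whose differential $(X,Z)\mapsto[X,P]+Z$ is an isomorphism $\mathfrak{m}_P\times\mathcal{D}_P\cong\mathcal{D}_P^{\perp}\times\mathcal{D}_P=\A_h$, so $\{Z=0\}$ is a split submanifold chart for $\mathcal{M}_P$.

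The step that carries the real weight is showing $\mathcal{M}_P=Gr(P_0)$ near $P$. The inclusion $\mathcal{M}_P\subset Gr(P_0)$ is immediate. For the reverse, given $Q\in Gr(P_0)$ with $\|Q-P\|<1$ I would invoke the direct rotation: there is a $P$-codiagonal skew-adjoint $x=x(P,Q)$, depending smoothly on $Q$ with $x(P,P)=0$, such that $e^{x}Pe^{-x}=Q$; shrinking the relatively open set $\mathcal{V}_P=\{Q\in Gr(P_0):\|Q-P\|<1\}$ so that $x(P,Q)\in B$, we get $Q=\psi_P(x(P,Q))\in\mathcal{M}_P$, with inverse chart $\psi_P^{-1}=x(P,\cdot)$. (Equivalently, one starts from a smooth section $U(P,Q)$ as in Lemma \ref{section} and corrects it on the right by an element of $\mathcal{I}_P$ to make it codiagonal; the direct rotation is precisely the normalized outcome, and this correction is possible exactly because $\mathfrak{k}_P$ is complemented.) Then $\{(\mathcal{V}_P,\psi_P^{-1})\}_{P\in Gr(P_0)}$ is a $C^\infty$ atlas whose transition maps are compositions of exponentials, logarithms, and smooth sections, hence $C^\infty$, and by the previous paragraph each chart is a split submanifold chart of $\A_h$. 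This proves $Gr(P_0)$ is a $C^\infty$ complemented submanifold of $\A_h$.

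For the submersion claim I would fix $U_0\in\U_{\A}$, write points near $U_0$ as $U_0e^{W}$ with $W\in\A_{sk}$ small, and note $\pi_P(U_0e^{W})=C_{U_0}(\,e^{W}Pe^{-W}\,)$, where $C_{U_0}$ is conjugation by $U_0$, a $C^\infty$ diffeomorphism of $Gr(P_0)$. The map $W\mapsto e^{W}Pe^{-W}$, restricted to the complemented subspace $\mathfrak{m}_P$, is the chart $\psi_P$, a local diffeomorphism onto a neighbourhood of $P$ in $Gr(P_0)$, while $\mathfrak{k}_P$ lies in the kernel of its differential at $0$; hence that differential is a split surjection and the map is a submersion near $0$. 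Composing with left translation and with $C_{U_0}$ shows $\pi_P$ is a submersion near $U_0$, and $U_0$ was arbitrary. I expect the only genuinely delicate point to be the one isolated above, namely producing a \emph{codiagonal} (horizontal) local section so that the exponential chart actually surjects onto $Gr(P_0)$; Dixmier's direct rotation is the clean input there, and everything else is the standard Banach--Lie homogeneous-space package, which works precisely because the diagonal subalgebra $\mathfrak{k}_P$ is complemented in $\A_{sk}$.
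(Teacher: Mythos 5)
Your proof is correct and follows essentially the standard homogeneous-space argument for this statement; the paper itself states the theorem without proof, citing Porta--Recht \cite{prV}, and your write-up is the argument that reference (and \cite{cpr,kov}) carry out: split $\A_h$ and $\A_{sk}$ into $P$-diagonal and $P$-codiagonal parts, use the exponential chart $X\mapsto e^{X}Pe^{-X}$ on the codiagonal skew-adjoints, apply the inverse function theorem, and feed in the direct rotation (Dixmier) to get local surjectivity onto $Gr(P_0)$.

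One small remark: your auxiliary chart $\Phi(X,Z)=e^{X}(P+Z)e^{-X}$, which you introduce only to certify that $\mathcal{M}_P$ is a split submanifold, already gives the identification $\mathcal{M}_P=Gr(P_0)$ near $P$ without invoking Dixmier. Indeed $\Phi(X,Z)$ is a projection iff $P+Z$ is, and for $Z$ $P$-diagonal, Hermitian and small, $(P+Z)^2=P+Z$ forces $PZP(PZP+1)=0$ and $(1-P)Z(1-P)((1-P)Z(1-P)-1)=0$, hence $Z=0$ since $\pm 1$ are not in the spectrum of the small blocks. Any projection close to $P$ is already in $Gr(P_0)$ by Lemma \ref{section}, so $\Phi^{-1}(Gr(P_0))=\{Z=0\}$ locally, which is simultaneously the complemented-submanifold chart and the local parametrization of $Gr(P_0)$. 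Using the direct rotation instead, as you do, is also fine and has the advantage of producing the explicit inverse chart $Q\mapsto x(P,Q)$; the two routes buy the same thing.
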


\begin{rem}[Real Grassmannians] The computations of this paper can be carried out to the case of the real Grassmannian of a real Hilbert space $\mathcal H$, by identifying a subspace $\mathcal S\subset\mathcal H$ with the orthogonal projection onto $\mathcal S$. In this case, the symbol $x^*$ will be used to denote the transpose operator (for the finite dimensional case, it is just the transpose matrix). Thus the unitary group is replaced by the orthogonal group, and $Gr(P_0)$ is the similarity orbit for the action of the orthgonal group of $\mathcal H$. We can then include in our discussion real $C^*$-algebras (see \cite{rosenberg} for a systematic review of basic results in this setting). There are no significant modifications in the proofs, except for the fact that in some cases it is necessary to pass to the complexification to do some computations and then get back. Thus all the results hold without modifications (when necessary, we introduce some clarification). 
\end{rem}

\subsection{$P$ co-diagonal operators}\label{pcodiago}

Following ideas in \cite{pr} and \cite{cpr}, we shall base our study of the geometry of $Gr(P_0)$ on the following decomposition of $\A$. Fix $P\in Gr(P_0)$. Then operators $A\in \A$ can be written as $2\times 2$ block matrices:
$$
A=\left(\begin{array}{cc} PAP & PAP^\perp \\ P^\perp AP & P^\perp AP^\perp \end{array} \right)=\left(\begin{array}{cc} a_{11} & a_{12} \\ a_{21} & a_{22} \end{array} \right).
$$
Then $\A$ can be decomposed as
$$
A=\left(\begin{array}{cc} a_{11} & 0 \\ 0 & a_{22} \end{array} \right)+\left(\begin{array}{cc} 0 & a_{12} \\ a_{21} & 0 \end{array} \right)=A_d+A_c,
$$
where $A_d$ will be called the $P$-diagonal part of $A$, and $A_c$ the $P$-co-diagonal part of $A$. Note that $A_d$ commutes with $P$, and this characterizes the $P$-diagonal operators. Let us denote by $\D_{P}$ and  $\C_{P}$ the (closed, complemented) subpaces of $P$-diagonal and $P$-co-diagonal operators; clearly $\D_{P}\oplus\C_{P}=\A$. 
$$
[ {\D_P}, {\D_P}]\subset  {\D_P},\qquad [ {\D_P}, {\C_P}]\subset  {\C_P},\qquad [ {\C_P}, {\C_P}]\subset  {\D_P}.
$$

\begin{rem}[Symmetries] It shall be sometimes useful to consider the symmetry induced by a projection. Namely, projections are in one to one correspondence with symmetries $\mathfrak{s}\in\A$: $\mathfrak{s}^*=\mathfrak{s}^{-1}=\mathfrak{s}$. The correspondence is given by 
$$
P\longleftrightarrow \mathfrak{s}_P=2P-1.
$$
This viewpoint was also adopted by Porta and Recht \cite{pr} and also by Kovarik \cite{kov}.
\end{rem}

Returning to the diagonal/co-diagonal decomposition, note that the $P$-diagonal part $A_d$ commutes with $\mathfrak{s}_{P}$ and the $P$-co-diagonal part $A_c$ anti-commutes with $\mathfrak{s}_{P}$:
\begin{align}\label{anticonm}
A_c\mathfrak{s}_{_{P}}&=\left(\begin{array}{cc} 0 & a_{12} \\ a_{21} & 0 \end{array}\right)\left(\begin{array}{cc} 1 & 0 \\ 0 & -1 \end{array}\right)=\left(\begin{array}{cc} 0 & a_{12} \\ -a_{21} & 0 \end{array}\right)\\
&=-\left(\begin{array}{cc} 0 & a_{12} \\ a_{21} & 0 \end{array}\right)\left(\begin{array}{cc} 1 & 0 \\ 0 & -1 \end{array}\right)=-\mathfrak{s}_{P}A_c.\nonumber
\end{align}
Moreover, using this and a direct computations it is easy to obtain that
\begin{equation}\label{anticonmexp}
\mathfrak{s}_{_{P}}e^{-A_c}=e^{A_c}\mathfrak{s}_{_{P}}.
\end{equation}

Let us show how the diagonal / co-diagonal decomposition gives also a natural linear connection for $Gr(P_0)$. We shall make it here explicit, though it is a particular case of what in classical differential geometry is a reductive structure for a homogeneous space. 

\medskip

As discussed for Hermitian operators, the  anti-Hermitian operators $X^*=-X$ can also be decomposed as \textit{horizontal vectors} (co-diagonal with respect to $P$) and \textit{vertical vectors} (diagonal with respect to $P$). This induces a decomposition of $T\U$ (which is a vector bundle over $Gr(P_0)$) as the direct Whitney sum of a vertical bundle and a horizontal bundle. Let us develop these ideas with more detail:

\begin{lem}[Tangent spaces and vector fields in $Gr(P_0)$]\label{campos} Let $P\in Gr(P_0)$. Then
\noindent
\begin{enumerate}
\item
$X\in\C_P$ if and only if $UXU^*\in\C_{UPU^*}$, for any $U\in\U_{\mathcal A}$.
\item
$X\in\C_P$ if and only if $X=XP+PX$ if and only if $\si_P X=-X\si_P$; in this case $[P,[P,X]]=X$. 
\item If $X\in \C_P$, the spectrum of $X$ is balanced i.e. $\sigma(X)=- \sigma(X)$.
\item 
The tangent space $T_P Gr(P_0)$ of  $Gr(P_0)$ is $\C_{P}\cap \A_h$, and a typical tangent vector $X_P$ at $P$ is of the form $X_P=[x,P]$ with $x^*=-x\in \C_P$; such $x$ is unique and is in fact given by $x=[X_P,P]$. 
\item In particular $x\mapsto [x,P]$ is an isomorphism between  $\C_P\cap \A_{sk}$ and $T_PGr(P_0)$, with inverse $V\mapsto [V,P]$.
\item Let $X$ be a vector field in $Gr(P_0)$, which is a smooth map from $Gr(P_0)$ into $\A_h$ such that $X_P\in \C_P$. Then it verifies 
$$
-DX_P(Y_P)=P(X_PY_P+Y_PX_P)=(X_PY_P+Y_PX_P)P
$$
and
$$
DX_P(Y_P)=DX_P(Y_P)P+PDX_P(Y_P)+X_PY_P+Y_PX_P
$$
for any $Y_P\in \C_P$.
\end{enumerate}
\end{lem}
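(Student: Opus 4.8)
The plan is to dispatch the six items one after another, each by a direct computation built on the $P$-diagonal / $P$-co-diagonal splitting $\A=\D_P\oplus\C_P$ and its compatibility with the symmetry $\si_P=2P-1$; beyond the grading relations recorded in Subsection \ref{pcodiago} and the submersion property of $\pi_P$ recalled just above, nothing else is needed.

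For $(1)$--$(3)$ I would start from the corner characterization $X\in\C_P\iff PXP=0=(1-P)X(1-P)$. Then $(1)$ is immediate: conjugating by $U\in\U_{\A}$ sends $P\mapsto UPU^*$, $1-P\mapsto U(1-P)U^*$ and $X\mapsto UXU^*$, so the two corner equations are preserved (use $U^*$ for the converse). For $(2)$ I would expand $XP+PX$ into the four $P$-corners and compare with the analogous expansion of $X$; the two agree precisely when $PXP=0$ and $(1-P)X(1-P)=0$. The same calculation, via $\si_PX+X\si_P=2(PX+XP)-2X$, gives the symmetry reformulation, and $[P,[P,X]]=PX+XP-2PXP$ collapses to $X$ once $PXP=0$. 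For $(3)$ I would read off from $(2)$ that $\si_PX\si_P=-X$ with $\si_P$ invertible, so $X$ is similar to $-X$ and $\sigma(X)=-\sigma(X)$; in the real case one first passes to the complexification to make sense of the spectrum.

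For $(4)$ and $(5)$ I would use that $Gr(P_0)$ is the $\U_{\A}$-orbit of $P$ and that $\pi_P$ is a submersion, so $T_PGr(P_0)$ is the image of the differential of $\pi_P$ at $1$, namely $x\mapsto[x,P]$ with $x\in\A_{sk}$. A corner check shows $[x,P]^*=[x,P]$ and $[x,P]\in\C_P$ for every $x$, while $[x_d,P]=0$ for the $P$-diagonal part $x_d$ of $x$; hence $T_PGr(P_0)\subseteq\C_P\cap\A_h$ and one may take $x\in\C_P\cap\A_{sk}$. Conversely, given $X\in\C_P\cap\A_h$ I would set $x=[X,P]$: then $x^*=-x$, $x\in\C_P$, and $[x,P]=[[X,P],P]=X$ (using $PXP=0$), which realizes $X$ as a tangent vector of the stated form. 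Uniqueness of such $x$ follows because two choices differ by an element of $\D_P\cap\C_P=\{0\}$, and this also forces $x=[X_P,P]$. Item $(5)$ is then the remark that $x\mapsto[x,P]$ is a bounded linear bijection between $\C_P\cap\A_{sk}$ and $T_PGr(P_0)$ with bounded inverse $V\mapsto[V,P]$, hence a Banach-space isomorphism.

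For $(6)$ I would differentiate the pointwise identities that define a tangent vector field. Fix a smooth curve $\gamma$ in $Gr(P_0)$ with $\gamma(0)=P$ and $\dot\gamma(0)=Y_P\in\C_P$; since $X_{\gamma(t)}\in\C_{\gamma(t)}$, part $(2)$ gives, for all $t$, both $X_{\gamma(t)}=X_{\gamma(t)}\gamma(t)+\gamma(t)X_{\gamma(t)}$ and $\gamma(t)X_{\gamma(t)}\gamma(t)=0$. Differentiating the first at $t=0$ yields the second displayed formula; differentiating the second controls the $P$-diagonal corner of $DX_P(Y_P)$, and combining the two recovers the first displayed formula. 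Here the product relations of the splitting ($\C_P\C_P\subseteq\D_P$, $\D_P\C_P\subseteq\C_P$, etc., all corner computations) do the bookkeeping --- in particular $X_PY_P+Y_PX_P\in\D_P$, which is why it commutes with $P$ and the two expressions $P(X_PY_P+Y_PX_P)$ and $(X_PY_P+Y_PX_P)P$ coincide. The most delicate steps are the corner bookkeeping in $(6)$ and the detour through the complexification in the real case of $(3)$; otherwise the lemma is a routine unwinding of the definitions, with no genuine obstacle.
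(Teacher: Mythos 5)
Your proof is correct and follows essentially the same route as the paper's: direct corner/symmetry computations for (1)--(3), the submersion $\pi_P$ (with kernel the $P$-diagonal skew-adjoints) for (4)--(5), and differentiation of the two pointwise identities from (2) along a curve through $P$ with speed $Y_P$ for (6). One caveat worth flagging in (6): the first displayed identity as printed cannot hold literally, since by the second displayed identity $DX_P(Y_P)$ has a nonzero $P$-co-diagonal part in general, whereas $P(X_PY_P+Y_PX_P)$ is $P$-diagonal. What your differentiation of $\gamma(t)X_{\gamma(t)}\gamma(t)=0$ at $t=0$ actually yields (and what the paper's proof, and later Proposition \ref{equalcon}, both use) is the corner identity
$$
-P\,DX_P(Y_P)\,P \;=\; P(X_PY_P+Y_PX_P)\;=\;(X_PY_P+Y_PX_P)P,
$$
the last equality because $X_PY_P,\,Y_PX_P\in\D_P$ and so commute with $P$, which also lets you pass from $Y_PX_PP+PX_PY_P$ to $P(X_PY_P+Y_PX_P)$. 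So your phrase ``combining the two recovers the first displayed formula'' should be tightened: differentiating $\gamma X\gamma=0$ plus the $\D_P$-membership of the products already gives the projected version directly, with no need to combine.
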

\begin{proof}
The first two assertions can be verified by hand from the very definitions and equation (\ref{anticonm}). Assumme that $\lambda\in \sigma(X)$, then $X-\lambda$ is not invertible and the same holds for $\si_P(X-\lambda)$. But
$$
\si_P(X-\lambda)=-X\si_P-\lambda\si_P=-(X+\lambda)\si_P
$$
thefore $X+\lambda$ is not invertible. The fourth assertion is a consequence of: the first two assertions and the fact that $\pi_P$ is a smooth submersion with kernel the diagonal operators $A^*=-A$. The fifth assertion is a restatement of the previous. Now let $X$ be a vector field in $Gr(P_0)$, consider any smooth path $\alpha$ in $Gr(P_0)$ with $\alpha(0)=P$ and $\alpha'(0)=Y_P$. Write $X'=DX_P(Y_P)=(X\circ\alpha(t))'|_{t=0}$, for short, also write $X=X_P$ and $Y=Y_P$. Since $PX_PP=0$ for any $P\in G(\h)$, replacing $P$ with $\alpha$ and differentiating at $t=0$ we get $PX'P+YXP+PXY=0$. Now since $\mathfrak s_P YX=-Y\mathfrak s_P X=YX\mathfrak s_P$, we also have $PYX=YXP$, thus $-PX'P=P(YX+XY)=(XY+YX)P$ and this proves the first identity in (6). The second identity has a similar proof, now starting from $X_P=PX_P+X_PP$.
\end{proof}

\begin{rem}[Reductive subalgebra]\label{csub} Using the tildes to denote the diagonal and co-diagonal skew-adjoint operators, we note that we have a decomposition 
$$
Lie(\U_{\A})=\A_{sk}=\widetilde{\D_P}\oplus \widetilde{\C_P}
$$
which is graded, in the sense that
$$
[\widetilde{\D_P},\widetilde{\D_P}]\subset \widetilde{\D_P},\qquad [\widetilde{\D_P},\widetilde{\C_P}]\subset \widetilde{\C_P},\qquad [\widetilde{\C_P},\widetilde{\C_P}]\subset \widetilde{\D_P}.
$$
This structure induced by the action (in particular, the last inclusion) is also what is usually called \textit{reductive}.
\end{rem}

\subsection{Linear connections} In this section we discuss the connection of the Grassmannian, which comes in different presentations.

\begin{defi}[The horizontal connection]\label{hc}
Let ${\bf P}_{\C_P}:\A_h\to\C_P\subset\A_h$ be the projection onto $\C_P$ along diagonal operators, it easy to see that it is given by
$$
{\bf P}_{\C_P}(A)=PAP^\perp+P^\perp AP \hbox{ for } A\in\A_h.
$$
Thus for any vector field $X$ in $Gr(P_0)$ we can define the \textit{horizontal affine connection} by differentiating and projecting, that is if $X$ is a vector field along a smooth path in $Gr(P_0)$, we let
$$
\frac{D^H X}{dt} ={\bf P}_{\C_{P}}(\dot{X}(t)).
$$
This connection was introduced in this form by Kovarik \cite{kov}.
\end{defi}

We now discuss a different presentation of this connection, which can be derived from the general presentation of the Grassmannian as an homogeneous reductive space as shown by Mata-Lorenzo, Porta and Recht in \cite{rm,prV}.

\begin{defi}
Let $P(t)$ with $t\in I=[t_0,t_1]$ be a smooth curve in $Gr(P_0)$. A smooth curve $U(t), t\in I$ of unitary operators is a \textit{co-diagonal lifting}  for $P(t)$, if 
$$
U(t)P(t_0)U(t)^*=P(t) \ \ \hbox{ and }\ \ U^*(t)\dot{U}(t)\in\C_{P(t)}.
$$
\end{defi}
If one requires that $U(t_0)=1$, then the curve $U$  is unique.  Existence and uniqueness of such liftings follow from the next result, for a proof see \cite{prV}:
\begin{lem}
The co-diagonal lifting satisfying $U(t_0)=1$, is  the unique solution of the following linear differential equation:
\begin{equation}\label{levantadahorizontal}
\left\{ \begin{array}{l} \dot{U}=[\dot{P}_t,P_t] U \\ U(t_0)=1 \end{array} \right. ,
\end{equation}
where we abbreviate $P_t=P(t)$.
\end{lem}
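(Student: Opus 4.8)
The plan is to separate the routine analytic part from the small amount of algebra. First I would note that, since $P_t$ is a smooth self-adjoint-operator-valued curve, the coefficient $t\mapsto[\dot P_t,P_t]$ is smooth and takes values in $\A_{sk}$, because a commutator of two self-adjoint elements is skew-adjoint. Hence \eqref{levantadahorizontal} is a linear differential equation with smooth coefficients in the Banach space $\A$, so on the interval $I$ it has a unique solution $U\colon I\to\A$ with $U(t_0)=1$. To see that $U_t$ is unitary I would differentiate: using $\dot U_t=[\dot P_t,P_t]U_t$ and $[\dot P_t,P_t]^*=-[\dot P_t,P_t]$ one gets $\tfrac{d}{dt}(U_t^*U_t)=0=\tfrac{d}{dt}(U_tU_t^*)$, and since both products equal $1$ at $t=t_0$ we conclude $U_t\in\U_{\A}$ for all $t$. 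What remains is to show that this $U$ is a co-diagonal lifting and, conversely, that every co-diagonal lifting with value $1$ at $t_0$ satisfies \eqref{levantadahorizontal}.

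For the intertwining property $U_tP_{t_0}U_t^*=P_t$ the key input is the identity $\dot P_t=[[\dot P_t,P_t],P_t]$. I would obtain it from Lemma~\ref{campos}: differentiating $P_t^2=P_t$ gives $\dot P_tP_t+P_t\dot P_t=\dot P_t$, which is precisely the statement $\dot P_t\in\C_{P_t}$, and the same item of that lemma then yields $[P_t,[P_t,\dot P_t]]=\dot P_t$, i.e. $[[\dot P_t,P_t],P_t]=\dot P_t$. With this, setting $R_t=U_t^*P_tU_t$ and differentiating, one finds
\[
\dot R_t=U_t^*\big(\dot P_t-[[\dot P_t,P_t],P_t]\big)U_t=0,
\]
so $R_t\equiv R_{t_0}=P_{t_0}$, which is the desired relation. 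For the co-diagonality I would read it off the equation: $\dot U_tU_t^*=[\dot P_t,P_t]$, and since $\dot P_t\in\C_{P_t}$, $P_t\in\D_{P_t}$ and $[\D_{P_t},\C_{P_t}]\subset\C_{P_t}$, this operator is $P_t$-co-diagonal; conjugating by $U_t$ and using Lemma~\ref{campos}(1) together with $U_t^*P_tU_t=P_{t_0}$ then gives that $U_t^*\dot U_t$ lies in the co-diagonal subspace required in the definition.

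For uniqueness I would take any co-diagonal lifting $V_t$ with $V_{t_0}=1$, set $W_t=\dot V_tV_t^*\in\A_{sk}$, and decompose $W_t=W_t^{\mathrm d}+W_t^{\mathrm c}$ along $\D_{P_t}\oplus\C_{P_t}$. Differentiating $P_t=V_tP_{t_0}V_t^*$ gives $\dot P_t=[W_t,P_t]=[W_t^{\mathrm c},P_t]$, since the diagonal part commutes with $P_t$; applying $[P_t,\,\cdot\,]$ and using Lemma~\ref{campos}(2) recovers $W_t^{\mathrm c}=[\dot P_t,P_t]$, while the co-diagonality hypothesis on $V$ (translated to $\dot V_tV_t^*$ via Lemma~\ref{campos}(1)) forces $W_t^{\mathrm d}=0$. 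Hence $\dot V_t=[\dot P_t,P_t]V_t$, so by the uniqueness part of the linear ODE $V_t=U_t$.

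I expect no serious obstacle: given Lemma~\ref{campos}, the statement is essentially bookkeeping. The two points that need a little care are establishing the algebraic identity $\dot P_t=[[\dot P_t,P_t],P_t]$ (which rests entirely on the characterizations of $\C_P$ in Lemma~\ref{campos}) and keeping track of which side $U_t$ acts on when passing between co-diagonality relative to $P_t$ and relative to the base point $P_{t_0}$; everything else is the standard existence–uniqueness theory for linear ordinary differential equations in a Banach algebra.
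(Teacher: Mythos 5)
Your proof is essentially the natural one, and the paper itself defers to \cite{prV} rather than proving this lemma, so there is no in-paper argument to compare against. That said, two points need attention.

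A minor slip in the unitarity step: $\frac{d}{dt}(U_t^*U_t)=0$ does follow directly, but
\begin{equation*}
\frac{d}{dt}(U_tU_t^*)=[\dot P_t,P_t]U_tU_t^*-U_tU_t^*[\dot P_t,P_t]=\bigl[[\dot P_t,P_t],U_tU_t^*\bigr]
\end{equation*}
is a commutator, not literally zero. The conclusion $U_tU_t^*\equiv 1$ still holds (the constant $1$ solves that linear ODE with the same initial value, or one can simply note that solutions of $\dot U=A(t)U$, $U(t_0)=1$ are automatically invertible and then use $U_t^*U_t=1$), but the identity as you wrote it is not correct.

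The more substantive issue concerns which co-diagonality condition you are verifying. Your computation gives $\dot U_tU_t^*=[\dot P_t,P_t]\in\C_{P_t}$, and conjugating through $U_t^*P_tU_t=P_{t_0}$ via Lemma~\ref{campos}(1) gives $U_t^*\dot U_t\in\C_{P_{t_0}}$. That is the geometrically correct horizontality condition for the submersion $\pi_{P_0}$, whose horizontal space at $U$ is $\{\dot U:\dot UU^*\in\C_{UP_0U^*}\}$. However, the definition preceding the lemma reads $U^*(t)\dot U(t)\in\C_{P(t)}$, which is a different subspace from $\C_{P(t_0)}$ in general, and the solution of the stated ODE does not in fact satisfy $U_t^*\dot U_t\in\C_{P_t}$. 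Your proof therefore validates the lemma only after correcting what is almost certainly a typo (the intended condition is $\dot U(t)U^*(t)\in\C_{P(t)}$, equivalently $U^*(t)\dot U(t)\in\C_{P(t_0)}$). You make the same silent correction in the uniqueness part, where passing from the hypothesis on $V_t^*\dot V_t$ to $W_t=\dot V_tV_t^*\in\C_{P_t}$ needs $V_t^*\dot V_t\in\C_{P_{t_0}}$, not $\C_{P_t}$. This should be stated explicitly rather than hidden in the phrase ``the co-diagonal subspace required in the definition''; otherwise a careful reader will think the proof contradicts the definition it is appealing to.
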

 
With the aid of the co-diagonal lifting we define a notion of parallel transport of tangent vectors in $Gr(P_0)$:
\begin{defi}[Paralell transport]
Given $X\in T_P Gr(P_0)$ and $P:[0,1]\to Gr(P_0)$,a smooth curve in $Gr(P_0)$ with $P(0)=P$, the parallel transport of $X$ along $P(t)$ is defined as $X\mapsto U(t)XU^*(t)$, where $U$ is the solution of the equation (\ref{levantadahorizontal}), $\dot{U}_t=[\dot{P}_t, P_t]U_t$, $U(0)=1$. 
\end{defi}

\begin{rem}\label{pt} It is easy to check that if $\delta(t)=e^{tZ}Pe^{-tZ}$ with $Z^*=-Z$ a co-diagonal operator, then the co-diagonal lifting of $\delta$ is $U(t)=e^{tZ}$. With this, paralell transport of $X_P\in \C_P=T_PGr(P_0)$ along $\delta$ is given by 
$$
\mu(t)= e^{tZ}X_P e^{-tZ}.
$$
\end{rem}

Note that by the Lemma \ref{campos} above, $X\in T_P Gr(P_0)=\C_{P}$ implies that 
$UXU^*\in \C_{UPU^*}=T_{P_t} Gr(P_0)$. The notion of parallel transport just introduced induces a covariant derivative as follows: 
\begin{defi}[The reductive connection]
If  $X(t)$ is a smooth field of tangent vectors along $P(t)$ in $Gr(P_0)$, let
\begin{equation}\label{derivadacovariante}
\frac{D^R X}{d t}=U\{\frac{d}{d t}(U^*XU)_{t=0}\}U^*,
\end{equation}
where $U$ is the solution of equation (\ref{levantadahorizontal}): $\dot{U}=[\dot{P}, P]U$, $U(0)=1$. Note that $U^*XU$ is a smooth curve in $\C_{P_0}$, thus its derivative is an element in $\C_{P_0}$, and therefore $\frac{D X}{d t}\in\C_{P(t)}=T_{P_t} Gr(P_0)$. We shall call this connection the \textit{reductive connection}, see \cite{rm}. 
\end{defi}

\begin{prop}[The reductive connection equals the horizontal connection]\label{equalcon} For $X,Y$  smooth vector fields in $Gr(P_0)$, $P\in Gr(P_0)$, the connection can be computed as
\begin{align*}
\nabla_Y X(P) & =DX_P(Y_P)+ \mathfrak s_P(X_PY_P+Y_PX_P)\\
 &= DX_P(Y_P)+[X_P,[Y_P,P]]=DX_P(Y_P)+[Y_P,[X_P,P]].
\end{align*}
Therefore it is a connection without torsion whose Christoffel bilinear operator at $P$ is given by $\Gamma_P(X,Y)=[X,[Y,P]]=\mathfrak s_P(XY+YX)$ for $X,Y\in \C_P\cap \A_h$.
\end{prop}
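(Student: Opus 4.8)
The plan is to compute both connections explicitly at a point $P\in Gr(P_0)$ and in the direction of a tangent vector $Y_P$, realized as $Y_P=\dot P(0)$ for a smooth curve $P(t)$ through $P$, and to check that both presentations produce the announced formula; equality of the two connections then follows, and torsion-freeness is read off from the symmetry in $X,Y$ of the resulting Christoffel operator.

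First I would unwind the reductive connection. Let $U(t)$ be the co-diagonal lifting with $U(0)=1$, so by (\ref{levantadahorizontal}) one has $\dot U(0)=[\dot P(0),P]=[Y_P,P]$, which is skew-adjoint because $\dot P$ and $P$ are self-adjoint. Differentiating $U^*XU$ at $t=0$ and using $U(0)=1$ gives
$$
\frac{D^R X}{dt}\Big|_{0}=\dot U(0)^*X_P+DX_P(Y_P)+X_P\dot U(0)=DX_P(Y_P)+[X_P,[Y_P,P]].
$$
Next I would rewrite the correction term using Lemma \ref{campos}(2): since $Y_P\in\C_P$ we have $Y_P=Y_PP+PY_P$ and $\mathfrak s_PY_P=-Y_P\mathfrak s_P$, whence $[Y_P,P]=Y_P\mathfrak s_P=-\mathfrak s_PY_P$; substituting, $[X_P,[Y_P,P]]=(X_PY_P+Y_PX_P)\mathfrak s_P$, and the same anticommutation relation shows that $\mathfrak s_P$ commutes with $X_PY_P+Y_PX_P$ (equivalently, this anticommutator is $P$-diagonal), so that $[X_P,[Y_P,P]]=\mathfrak s_P(X_PY_P+Y_PX_P)$. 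Exchanging $X$ and $Y$ in this identity and using the symmetry of the right-hand side gives the third expression $[Y_P,[X_P,P]]$. Hence $\Gamma_P(X,Y)=[X,[Y,P]]=\mathfrak s_P(XY+YX)$ is symmetric in $X,Y$, so $\nabla_YX-\nabla_XY$ reduces to $DX_P(Y_P)-DY_P(X_P)$, which is the Lie bracket $[Y,X]$ of the vector fields: the connection is torsion free.

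It then remains to see that the horizontal connection yields the same answer, i.e. that ${\bf P}_{\C_P}(DX_P(Y_P))=DX_P(Y_P)+\mathfrak s_P(X_PY_P+Y_PX_P)$, equivalently that the $P$-diagonal part of $Z:=DX_P(Y_P)$ equals $-\mathfrak s_P(X_PY_P+Y_PX_P)$. For this I would insert $Z$ into the identity of Lemma \ref{campos}(6), namely $Z=ZP+PZ+W$ with $W:=X_PY_P+Y_PX_P$; compressing by $P$ on both sides gives $PZP=-PWP$, and compressing by $P^\perp=1-P$ gives $P^\perp ZP^\perp=P^\perp WP^\perp$, so the diagonal part of $Z$ is $PZP+P^\perp ZP^\perp=-PWP+P^\perp WP^\perp=-\mathfrak s_PW$ (the last equality because $W$ is $P$-diagonal). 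Subtracting this diagonal part from $Z$ gives the claim, and completes the identification of the two connections.

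All of these are elementary manipulations with the $2\times 2$ block (diagonal/co-diagonal) decomposition, so I do not expect a real obstacle; the only points requiring care are the bookkeeping of signs in the relations $\mathfrak s_PA_c=-A_c\mathfrak s_P$ and in the compressions by $P$ and $P^\perp$, and being consistent about using the chosen curve with $\dot P(0)=Y_P$ so that $DX_P(Y_P)=\frac{d}{dt}\big|_0 X(P(t))$ appears identically in both computations.
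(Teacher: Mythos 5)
Your proof is correct and takes essentially the same approach as the paper: both argue through the identity $[X_P,[Y_P,P]]=\mathfrak s_P(X_PY_P+Y_PX_P)$ and both use the relations from Lemma \ref{campos}(6) to compute the horizontal connection; the only difference is that you compute the reductive connection first (directly from $\dot U(0)=[Y_P,P]$) and the horizontal one second, whereas the paper does the reverse, using the particular lift $U(t)=e^{tZ}$ of a geodesic. Your explicit check of torsion-freeness via the symmetry of $\Gamma_P$ and the Lie bracket in the ambient space is a nice addition that the paper leaves implicit.
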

\begin{proof}
Denoting $X'=DX_P(Y_P)$, by Lemma \ref{campos}(4), if we compute ${\bf P}_{\C_P}(DX_P(Y_P))$ we obtain 
\begin{align*}
\frac{D^H X}{dt} & =-2PX'P+PX'+X'P=2P(X_PY_P+Y_PX_P)+X'-(X_PY_P+Y_PX_P)\\
& =X'+\mathfrak s_P(X_PY_P+Y_PX_P).
\end{align*}
This establishes the first identity. On the other hand, writing $X=X_P,Y=Y_P$ for short we have
\begin{align*}
[X,[Y,P]] &=(XY+YX)P-XPY-YPX\\
& =P(XY+YX)-1/2(X\mathfrak s_PY+ XY)-1/2(Y\mathfrak s_PX+ YX)\\
& =P(XY+YZ)+1/2 \mathfrak s_P (XY+YX)-1/2(XY+YX)\\
&=(2P-1)(XY+YX)=\mathfrak s_P(XY+YX)
\end{align*}
where we used several times that $\mathfrak s_PX=-X\mathfrak s_P$ which implies $PXY=XYP$ (Lemma \ref{campos}(2)). This proves the second equality, and exchanging $X,Y$ in the previous equation we also obtain the proof of the final equality of the statement. Thus the formulas or this proposition give the horizontal connection. Now we show that it also equals the reductive connection: the value of a covariant along a path at $P$ only depends on the speed of the path at $P$, so let $\delta(t)=e^{tZ}Pe^{-tZ}$ with $Z^*=-Z$ co-diagonal and $[Z,P]=Y_P\in \C_P$; by the previous remark its horizontal lift is $U(t)=e^{tZ}$. Let $X$ be any vector field and consider $X$ along $\delta$, i.e. $X(\delta(t))$. Then by defninition of the reductive connection, we have that its value at $P$ is
$$
\frac{D^R X}{dt}=-ZX_P+X'_0+X_PZ=DX_P([Z,P])+[X_P,Z]=DX_P(Y_P)+[X_P,[Y_P,P]]
$$
where the last equality is due to Lemma \ref{campos}(3).
\end{proof}

\begin{coro}The geodesics of $Gr(P_0)$ for the connection are $\delta(t)=e^{tv}Pe^{-tv}$ with $v^*=-v$ co-diagonal, and paralell transport of $X\in \C_P$ along $\delta$ is given by $\mu(t)=e^{tZ}Xe^{-tZ}$. The exponential map of the connection  $Exp_P: \C_P\to Gr(P_0)$ is given by $Exp_P(V)=e^{[V,P]}Pe^{-[V,P]}$.
\end{coro}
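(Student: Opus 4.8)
The plan is to exhibit explicit solutions of the geodesic equation and of the parallel transport equation, and then close the argument with the standard uniqueness theorem for ODEs. Recall that a smooth curve $\delta$ in $Gr(P_0)$ is a geodesic precisely when $\tfrac{D\dot\delta}{dt}=0$, and that the parallel transport of $X\in T_PGr(P_0)$ along $\delta$ is the unique vector field $\mu$ along $\delta$ with $\mu(0)=X$ and $\tfrac{D\mu}{dt}=0$. Given $V\in\C_P\cap\A_h$, I would set $v=[V,P]$, which by Lemma \ref{campos}(4)--(5) is the unique element of $\C_P\cap\A_{sk}$ with $[v,P]=V$, and consider the curve $\delta(t)=e^{tv}Pe^{-tv}$. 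It lies in $Gr(P_0)$ since $e^{tv}\in\U_{\A}$, it is smooth, $\delta(0)=P$, and $\dot\delta(t)=e^{tv}[v,P]e^{-tv}=e^{tv}Ve^{-tv}$, so $\dot\delta(0)=V$. Since $v$ (equivalently $V$) ranges over all of $\C_P\cap\A_{sk}$ (respectively $T_PGr(P_0)$), this produces one curve for each admissible initial condition at $P$.

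To see that $\delta$ is a geodesic I would argue through the reductive presentation. By Remark \ref{pt} the co-diagonal lifting of $\delta$ with $U(0)=1$ is $U(t)=e^{tv}$; hence $U(t)^*\dot\delta(t)U(t)=[v,P]=V$ is constant in $t$, and the definition \eqref{derivadacovariante} of the reductive connection gives $\tfrac{D\dot\delta}{dt}=U(t)\,\tfrac{d}{dt}(V)\,U(t)^*=0$. As a cross-check one can instead use Proposition \ref{equalcon}: writing $X=[v,P]$ in $2\times2$ block form with respect to $P$ one verifies the elementary identity $[v,[v,P]]=-2\,\mathfrak s_P[v,P]^2$, and since $\ddot\delta=e^{tv}[v,[v,P]]e^{-tv}$, $\mathfrak s_\delta=e^{tv}\mathfrak s_Pe^{-tv}$ and $\dot\delta^2=e^{tv}[v,P]^2e^{-tv}$, this yields $\ddot\delta+\mathfrak s_\delta(\dot\delta^2+\dot\delta^2)=0$, i.e.\ $\nabla_{\dot\delta}\dot\delta=0$. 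The same mechanism handles parallel transport: for $X\in\C_P$ the curve $\mu(t)=U(t)XU(t)^*=e^{tv}Xe^{-tv}$ satisfies $U^*\mu U=X$ constant, hence $\tfrac{D\mu}{dt}=0$ with $\mu(0)=X$, which is exactly the claimed formula (this is already recorded in Remark \ref{pt}, with the $Z$ of the statement equal to $v$).

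Finally, for uniqueness: $Gr(P_0)$ is a smooth complemented submanifold of the Banach space $\A_h$ and the connection is smooth, so the geodesic equation is a second-order ODE with smooth right-hand side, and the Banach-space Picard--Lindelöf theorem provides, for each initial condition $(\delta(0),\dot\delta(0))=(P,V)$, a unique maximal geodesic; this must therefore be the curve $\delta$ constructed above, which is defined for all $t\in\R$. In particular $\Exp_P$ is defined on the whole tangent space at $P$ and $\Exp_P(V)=\delta(1)=e^{[V,P]}Pe^{-[V,P]}$. I expect the only point requiring a little care to be this uniqueness step in the infinite-dimensional setting, but it is standard: it only uses that $Gr(P_0)$ is a smooth (complemented) Banach submanifold and that the spray of a smooth connection is smooth. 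Every other ingredient is a direct computation with the block decomposition of Section \ref{pcodiago} together with Lemma \ref{campos}, Proposition \ref{equalcon} and Remark \ref{pt}.
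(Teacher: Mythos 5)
Your proof is correct and matches the paper's implicit reasoning: the corollary is a direct consequence of Proposition \ref{equalcon} and Remark \ref{pt}, and the verification you give — that $U^*\dot\delta\,U$ and $U^*\mu\,U$ are constant for the co-diagonal lift $U(t)=e^{tv}$, closed by the Banach-space Picard--Lindel\"of uniqueness for the geodesic ODE — is exactly what the paper leaves to the reader. Your cross-check identity $[v,[v,P]]=-2\,\mathfrak{s}_P[v,P]^2$ is also correct (it follows quickly from the block description of Remark \ref{blm}), and you rightly read the $Z$ in the parallel-transport formula as $v=[V,P]$.
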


\begin{rem}\label{blm} For given $P\in Gr(P_0)$, we represent any operator as a $2$ by $2$ block matrix as in Section \ref{pcodiago}, i.e. 
\begin{equation}\label{VX}
P=\left(\begin{array}{cc}1 & 0 \\ 0 & 0 \end{array}\right),\quad  V=\left(\begin{array}{cc} 0 & \lambda  \\ \lambda^* & 0 \end{array}\right), \quad X=\left(\begin{array}{cc} 0 & \rchi  \\ \rchi^* & 0 \end{array}\right)
\end{equation}
with $X,V$ typical tangent vectors at $P$, with $\lambda,\rchi$ rectangular block operators. Moreover if $v=[V,P]$ and $x=[X,P]$ then
$$
v=\left(\begin{array}{cc} 0 & -\lambda  \\ \lambda^* & 0 \end{array}\right), \quad x=\left(\begin{array}{cc} 0 & -\rchi  \\ \rchi^* & 0 \end{array}\right)
$$
and $[v,P]=V$, $[x,P]=X$. Since
$$
v^*v=\left(\begin{array}{cc}  \lambda\lambda^* & 0 \\ 0 & \lambda^*\lambda \end{array}\right)=V^*V
$$
it is apparent that $\|v\|=\|V\|$. Moreover $\|V\|=1$ if and only $\|\lambda^*\lambda\|=1 \,(=\|\lambda\lambda^*\|)$.
\end{rem}

Let $\sinc$ denote the cardinal sine function and $\sinhc$ denote the cardinal hyperbolic sine function, i.e.
$$
\sinc(x)=\left\{
\arraycolsep=4pt\def\arraystretch{1.1}\begin{array}{crl}
\displaystyle x^{-1}\sin(x) & \mathrm{if} & x\ne 0\\
1  & \mathrm{if} & x=0 \end{array}  \right.  \qquad
\sinhc(x)=\left\{
\arraycolsep=4pt\def\arraystretch{1.1}\begin{array}{crl}
\displaystyle x^{-1}\sinh(x) & \mathrm{if} & x\ne 0\\
1  & \mathrm{if} & x=0 \;.\end{array}  \right.
$$
\begin{rem}Let $V\in T_PGr(P_0)$ be as in (\ref{VX}), then the unique geodesic from $P$ with initial speed $V$ is
$$
\gamma(t)= \left(
\arraycolsep=-1pt\def\arraystretch{1.4}
\begin{array}{cc} \cos^2(t|\lambda^*|) & t\lambda \cos(t|\lambda|)\sinc(t|\lambda|)   \\ t\cos(t|\lambda|)\sinc(t|\lambda|)\lambda^* & \sin^2(t|\lambda|) \end{array}\right).
$$
\end{rem}
The proof is elementary but we include it for completeness: from the previous remark we know that
$$
v^2=-v^*v=\left(\begin{array}{cc}  -\lambda\lambda^* & 0 \\ 0 & -\lambda^*\lambda \end{array}\right), \quad\mathrm{ thus  }\quad v^{2k}=\left(\begin{array}{cc}  (-1)^k |\lambda^*|^{2k} & 0 \\ 0 & (-1)^k|\lambda|^{2k} \end{array}\right),
$$
and likewise
$$
v^{2k+1}=v^{2k}v=\left(\begin{array}{cc} 0& \!\!\! (-1)^{k+1} |\lambda^*|^{2k}\lambda \\  (-1)^{k}|\lambda|^{2k}\lambda^*  & 0\end{array}\right)=\left(\begin{array}{cc} 0&  \!\!\! (-1)^{k+1} \lambda |\lambda^*|^{2k} \\  (-1)^{k}\lambda^*|\lambda|^{2k}  & 0\end{array}\right).
$$
Using the exponential map series and with some rearranging, the formulas follow.

\subsection{Levi-Civita connection of the trace}\label{LC}

Let us  consider the case when the algebra has a finite trace $\tau$. In this case, there is an inner product defined globally on $\A$: 
$$
\langle X,Y\rangle=\textrm{Re}\;\tau(XY^*),
$$
and a norm $\|X\|_2=\sqrt{\tau X^*X}$. Note that $\tau(A^*)=\overline{\tau(A)}$, and thus $\langle X,Y\rangle\in\mathbb{R}$ if $X^*=X$ and $Y^*=Y$:
$$
\overline{\langle X,Y\rangle}=\overline{\tau(XY)}=\tau((XY)^*)=\tau(YX)=\tau(XY)=\langle X,Y\rangle.
$$
Therefore, a natural Riemannian metric is defined in $Gr(P_0)$, by endowing every tangent space with the fixed inner product of the trace in $\A_h$. When $\A$ is infinite dimensional, this is a \textit{weak} Riemannian metric: it is continuous but the trace norm does not give the original topology of the Grassmannian, which is the $C^*$-algebra topology.

\smallskip

Recall  ${\bf P}_{\C_P}:\A_h\to\C_P\subset\A_h$ is the projection along diagonal operators. We claim that diagonal operators are orthogonal to co-diagonal operators, or equivalently, that ${\bf P}_{\C_P}$ is an orthogonal projection:  
\begin{align*}
\langle{\bf P}_{\C_P}(A),B\rangle & =Tr((PAP^\perp+P^\perp AP)B) =Tr(PAP^\perp B)+Tr(P^\perp APB)\\
&=Tr(AP^\perp BP)+Tr(APBP^\perp) =Tr(A(P^\perp BP +PBP^\perp))\\
& =\langle A,{\bf P}_{\C_P}(B)\rangle.
\end{align*}
Thus the horizontal/reductive connection is the Levi-Civita connection of the trace Riemannian metric in $Gr(P_0)$ (see \cite{kov}). 

\medskip

It is well known and not hard to check that for $X,Y,Z\in \C_P$, the curvature tensor of the reductive connection is 
\begin{equation}\label{curvat}
R_P(X,Y)Z=-[[X.Y],Z].
\end{equation} 
Thus in the presence of a trace, if  $X,Y\in \C_P$ are orthonormal for the trace inner product, then the sectional curvature of the $2$-plane generated by $X,Y$ is 
$$
1/2\, Sec_P(X,Y)=-\langle XY,YX\rangle+\|XY\|_2,
$$
which is non-negative because of the Cauchy-Shwarz inequality of the trace, and in fact it is zero if and only if $[X,Y]=0$.

\subsection{Complex structure and symplectic form}

The Grassmannian has a natural complex structure $J:TGr(P_0)\to TGr(P_0)$ given by  
$J([X,P])=-iX$ for $X^*=-X$ co-diagonal. Equivalently, if $X_P\in C_P$ the complex structure is computed as
$$
J(X_P)=i(PX_P P^{\perp}-P^{\perp}X_PP)=i[P,X_P].
$$
This complex structure is equivariant in the sense that $UJ(X)U^*=J(UXU^*)$ and skew-hermitian with respect to the trace inner product (in the case of an algebra with trace). In this case the orbit also has a canonical symplectic form (the KKS-form, for Kostant, Kyriliov and Sorieau), given by
$$
\omega_P(X_P,Y_P)=-\langle X_P, JY_P\rangle=-\tau(X_P \, JY_P)=\tau(P[X,Y])
$$
for $X_P=[X,P],Y_P=[Y,P]\in \C_P$. This $2$-form is closed, if one uses the intrinsic definition for the exterior differential of a $2$-form given by 
$$
d\omega(X,Y,Z)=X(\omega(Y,Z))-\omega([X,Y],Z)+ \textrm{ the other two cyclic permutations}.
$$
Thus $(Gr(P_0), \,J\, , \langle\; ,\;\rangle \,,\, \omega)$ is a weak K\"ahler manifold (see \cite{tumpach} and the references therein). It is in fact strongly  K\"ahler or even hyper-K\"ahler in some cases (see \cite{tumpach} and the references therein), in particular for finite dimensional algebras $\mathcal A$.

Since the metric, the complex structure and thus the symplectic form are invariant for the coadjoint action of the unitary group $\U$, one sees that the group acts on the orbit by symplectomorphisms. The action admits the equivariant moment map $\mu:Gr(P_0)\to \A_{sk}^*$, which is essentially the identity. i.e. let us denote $\mu(P)(X)=\mu^X(P)$, then
$$
\mu^X(P)=\langle P,X\rangle=\tau(PX)
$$
for $P=UP_0U^*\in Gr(P_0)$ and $X^*=-X\in Lie(\U)=\mathcal A_{sk}$.  If we denote $X_{Gr}$ the vector field induced by the action, i.e. $X_{Gr}(P)=[X,P]$, we can see that the action is Hamiltonian: by derivating $\mu^X(U_tPU^*_t)=\tau(U_tPU_t^*X)$ with $U_t=e^{tZ}Pe^{-tZ}$ we obtain
$$
(D\mu^X)_P([Z,P])=\tau([Z,P]X)=-\tau(P[X,Z])=-\omega(X_{Gr}, [Z,P]).
$$
Therefore the map  $\mu^X:Gr(P_0)\to\mathbb R$ is a Hamiltonian: $D\mu^X=-i_{X_{Gr}}\omega$.

\subsection{Symmetric space structure} Consider $\sigma:\mathcal U_{\mathcal A}\to \mathcal U_{\mathcal A}$ given by $\sigma(u)=\mathfrak s u\mathfrak s$ with $\mathfrak s=2P_0-1$ the associated involution. Then $\sigma$ is an automorfism and it is involutive ($\sigma^2=id$). The fixed point subgroup of $\sigma$ is 
$$
\U_D=\{u\in\U_{\mathcal A}: uP_0=P_0u\},
$$ 
the diagonal unitary elements of $\mathcal A$. Then since $Gr(P_0)\simeq \U_{\mathcal A}/\U_{\mathcal D}$, the Grassmannian is a symmetric space in the sense of Cartan-Loos, see \cite[Section 3]{neeb} and \cite[Section 1.3]{chu}. The induced symmetric space action is given by the formula
$$
\mu(P,P')=\mathfrak s_P P' \mathfrak s_P
$$
for  $P,P'\in Gr(P_0)$ where $\mathfrak s_P=2P-1$. If we denote $S_P(P')=P\cdot P'=\mu(P,P')$, this is a Banach symmetric product satisfying 
\begin{enumerate}
\item $P\cdot P=P$ and $\forall P\in Gr(P_0)$ there exists a neighbourhood $V$ such that if $P'\in V$ and $P\cdot P'=P'$, then $P=P'$.
\item $P\cdot (P\cdot P')=P'$
\item $P\cdot (P'\cdot P'')=(P\cdot P')\cdot (P\cdot P)''$
\end{enumerate}
One obtains a symmetric product in the tangent bundle by letting $\Sigma=T\mu$ (which makes of $TGr(P_0)$ also a Banach symmetric space). The notation $\Sigma_V=\Sigma(V,\cdot)$ is used for the symmetries of $TGr(P_0)$. In our case by differentiating $\mu$ we get 
$$
\Sigma_V(W)= 2V P' \mathfrak s_P  + 2\mathfrak s_P P' V+ \mathfrak s_PW\mathfrak s_P.
$$
for tangent vectors $V,W$ at $P,P'$ respectively.

\begin{rem}[Symetric space connection]\label{ssc} Each symmetric product carries a natural torsion free connection, where its spray is given by $F(V)=-D(\Sigma_V\circ Z)(V)$, being $Z:T\to TM$ the zero section of the manifold (see \cite[Section 3]{neeb} for the Banach manifold setting). In our case $\Sigma_{V/2}\circ Z(P)=V P' \mathfrak s_P  + \mathfrak s_P P' V$, and if we differentiate with respect to $P'$ at $P$ in the direction of $V$ we obtain $F(V)=-2V^2\mathfrak s_P=-\mathfrak s_P 2V^2$ (the last equality by Lemma \ref{campos}(2)). Now the bilinear Christoffel  operator is obtained by polarizing the spray $F$, i.e.
$$
\Gamma_P(V,W)=-\frac{1}{2}(VW+WV)
$$
for tangent vectors $V,W$ at $P\in Gr(P_0)$ (see \cite[Chapter VIII]{lang}). We can use the standard formula to recover the covariant derivative from the spray, which for a vector field $\mu$ along the path $\gamma\subset M$ is $D_t\mu= \mu'-\Gamma_{\gamma}(\gamma',\mu)$. In this case we obtain 
$$
D_t\mu=\mu'+\mathfrak s_{\gamma}(\gamma'\mu +\mu\gamma')
$$
thus by Proposition \ref{equalcon} \textit{the connection induced by the symmetric product in the Grassmannian equals the reductive connection}.
\end{rem}

All the geometric properties of the connection can be derived from the symmetric product $\mu$. In particular, the map $S_P$ is the \textit{geodesic symmetry} around $P\in Gr(P_0)$: it maps $\gamma(t)$ to $\gamma(-t)$ for any geodesic $\gamma$ of the connection such that $\gamma(0)=P$. In fact, the reductive connection is the unique torsion free connection in $Gr(P_0)$ which is invariant for all the symmetries $S_P$, $P\in Gr(P_0)$ \cite[Theorem 3.6]{neeb}. As a consequence, $Gr(P_0)$ is locally symmetric in the classical sense: if $R$ is the curvature tensor of the connection, then $\nabla R=0$ \cite[Teorema 7.5.8]{lar}.

\subsection{Jacobi fields}\label{sjf}

In this section we discuss Jacobi fields in order to get a better understanding of the conjugate points along geodesics of the Grassmannian. We first recall the following facts (for proofs see for instance \cite[Theorem 2.9]{lang}):

\begin{prop}Let $\nabla$ be an affine connection in a Banach manifold $M$, let $R$ be its curvature tensor. Let $\gamma$ be a geodesic of $\nabla$ and let $\mu$ be a vector field along $\gamma$. We indicate with $D_t$ the covariant derivative of $\mu$ along $\gamma$. The following are equivalent:
\begin{enumerate}
\item $\mu$ is a Jacobi field along $\gamma$, i.e. $D^2_t \mu= R_{\gamma}(\gamma',\mu)\gamma$.
\item There exists a variation $\nu_s$ of $\gamma$ by geodesics such that $\mu=\nu'_0$.
\end{enumerate}
\end{prop}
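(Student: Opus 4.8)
The plan is to prove the two implications $(2)\Rightarrow(1)$ and $(1)\Rightarrow(2)$ separately, following the classical argument adapted to the Banach setting (this is Theorem 2.9 in \cite{lang}); I only sketch the key steps. For $(2)\Rightarrow(1)$, let $\nu\colon(-\varepsilon,\varepsilon)\times I\to M$ be a smooth variation with $\nu(0,\cdot)=\gamma$ and $\nu_s:=\nu(s,\cdot)$ a geodesic for each $s$, write $\partial_t\nu,\partial_s\nu$ for the coordinate velocity fields along $\nu$, and set $\mu=\partial_s\nu|_{s=0}$. The first step is the symmetry lemma $D_s\partial_t\nu=D_t\partial_s\nu$, valid because the connection is torsion free and $[\partial_s,\partial_t]=0$; for our connection this is Proposition \ref{equalcon}. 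Then one computes
$$
D_t^2\mu=D_tD_s\partial_t\nu\big|_{s=0}=\big(D_sD_t\partial_t\nu+R(\partial_t\nu,\partial_s\nu)\partial_t\nu\big)\big|_{s=0},
$$
where the second equality is the definition of $R$ together with $[\partial_s,\partial_t]=0$. Since each $\nu_s$ is a geodesic, $D_t\partial_t\nu\equiv0$, so the first term drops and we obtain $D_t^2\mu=R_\gamma(\gamma',\mu)\gamma'$.

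For $(1)\Rightarrow(2)$, given a solution $\mu$ of the Jacobi equation along $\gamma$, choose a smooth curve $c$ in $M$ with $c(0)=\gamma(0)$ and $\dot c(0)=\mu(0)$, and a smooth vector field $Y$ along $c$ with $Y(0)=\gamma'(0)$ and $(D_sY)(0)=(D_t\mu)(0)$. Define $\nu(s,t)=\Exp_{c(s)}(tY(s))$, which is well defined and smooth on a neighbourhood of $\{0\}\times I$ because the spray of the connection is smooth; then $\nu_0=\gamma$ and each $\nu_s$ is a geodesic, so by the first implication $\tilde\mu:=\partial_s\nu|_{s=0}$ is a Jacobi field along $\gamma$. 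A direct check gives $\tilde\mu(0)=\dot c(0)=\mu(0)$ and $(D_t\tilde\mu)(0)=(D_sY)(0)=(D_t\mu)(0)$. Since the Jacobi equation, after trivializing $\gamma^*TM$ by parallel transport along $\gamma$, becomes a linear second-order ODE for a curve in the fixed model space $T_{\gamma(0)}M$, its solutions are determined by their value and covariant derivative at $t=0$; hence $\tilde\mu=\mu$, and $\nu$ is the desired variation.

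The only points requiring care beyond the finite-dimensional case are (i) that $\Exp$ is defined and smooth, which follows from smoothness of the spray, and (ii) existence and uniqueness for the linear Jacobi ODE along $\gamma$, which reduces to a standard linear ODE in a Banach space after the parallel trivialization; both are available here. I also note that the computation in $(2)\Rightarrow(1)$ uses that the connection is torsion free, which for the reductive connection of $Gr(P_0)$ is exactly Proposition \ref{equalcon}; this is the main structural input, the rest being the routine manipulation of covariant derivatives of two-parameter maps.
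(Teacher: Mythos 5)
The paper gives no proof of this proposition; it cites Lang's Theorem~2.9 (which is proved in the spray formalism, hence automatically torsion free). Your argument is the standard one and is correct, and you rightly flag the only two points that need care in the Banach setting: the symmetry lemma $D_s\partial_t\nu=D_t\partial_s\nu$ requires the connection to be torsion free (implicit in the statement, and verified for the reductive connection by Proposition~\ref{equalcon}), and existence/uniqueness for the Jacobi equation follows by trivializing $\gamma^*TM$ via parallel transport and invoking linear ODE theory in Banach spaces.
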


We also need to recall the well-known formulas for the differential of the exponential map in a Banach-Lie group:
\begin{lem}Let $v,w\in Lie(G)$ where $G$ is a Banach-Lie group, let $\exp$ be its exponential map, let $F(\lambda)=\frac{1-e^{-\lambda}}{\lambda}$ extended by $1$ at $\lambda=0$, let $G(\lambda)=e^{\lambda}F(\lambda)$. Then
$$
D\exp_v(w)=e^v F(\ad v)w=[G(\ad v)w]e^w.
$$
\end{lem}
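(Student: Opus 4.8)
The displayed formula is the classical expression for the differential of the group exponential, so the plan is to run the standard two–parameter argument, being mildly careful about the Banach setting. First I would reduce everything to an identity inside $\lie(G)$. Since $D\exp_v(w)=\frac{d}{dt}\big|_{t=0}\exp(v+tw)$, introduce the smooth map $\Phi\colon\mathbb{R}^2\to G$, $\Phi(s,t)=\exp\big(s(v+tw)\big)$, which is smooth because $\exp$ is smooth on a Banach--Lie group, and the curve
$$
\eta(s)=\Phi(s,0)^{-1}\,\partial_t\Phi(s,0)\in\lie(G),
$$
noting $\Phi(s,0)=e^{sv}$. Then $\eta(0)=\partial_t\big|_{t=0}(e)=0$ and $e^v\eta(1)=D\exp_v(w)$, so it suffices to compute $\eta(1)$.

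Next I would derive the linear ODE satisfied by $\eta$. Using the one–parameter subgroup equation in left-invariant form, $\partial_s\Phi=(v+tw)\Phi$, together with $\partial_s\partial_t\Phi=\partial_t\big((v+tw)\Phi\big)=w\Phi+(v+tw)\partial_t\Phi$, the product rule for $\partial_s\big(\Phi^{-1}\partial_t\Phi\big)$ gives two terms $\mp\,\Phi^{-1}(v+tw)\Phi\cdot\big(\Phi^{-1}\partial_t\Phi\big)$ that cancel, leaving
$$
\partial_s\big(\Phi(s,t)^{-1}\partial_t\Phi(s,t)\big)=\Phi(s,t)^{-1}\,w\,\Phi(s,t)=\Ad\big(\Phi(s,t)^{-1}\big)w.
$$
Setting $t=0$, this reads $\eta'(s)=\Ad(e^{-sv})w=e^{-s\,\ad v}w$, an equation with values in the Banach space $\lie(G)$; integrating from the initial condition $\eta(0)=0$ yields $\eta(s)=\int_0^s e^{-u\,\ad v}w\,du$. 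Evaluating at $s=1$ gives $\eta(1)=\int_0^1 e^{-u\,\ad v}w\,du=F(\ad v)w$, and therefore $D\exp_v(w)=e^v F(\ad v)w$.

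For the second expression I would either repeat the same bookkeeping with the right-invariant quantity $(\partial_t\Phi)\Phi^{-1}$, or simply push $e^v$ through the integral: since $e^v\,\Ad(e^{-uv})w=\Ad\big(e^{(1-u)v}\big)w\cdot e^v=\big(e^{(1-u)\,\ad v}w\big)e^v$, the change of variables $r=1-u$ gives $D\exp_v(w)=\big(\int_0^1 e^{r\,\ad v}w\,dr\big)e^v=\big(e^{\ad v}F(\ad v)w\big)e^v=\big(G(\ad v)w\big)e^v$.

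The only genuine subtlety — the step I expect to need the most care — is the Banach-manifold bookkeeping: smoothness of $\Phi$ and the attendant interchange of $\partial_s$ and $\partial_t$, together with the fact that $\ad v$ is a bounded operator on $\lie(G)$, so that the entire functions $F$ and $G$ act through norm-convergent power series and commute with the Riemann integrals above; once these are granted, the computation is the finite-dimensional one verbatim.
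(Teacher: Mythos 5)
The paper states this lemma without proof, citing it as a well-known fact, so there is no argument of the paper to compare against; your derivation is the standard one and it is essentially correct. Two small remarks. First, the lemma as printed has a typo --- the last factor in the display should be $e^v$, not $e^w$ --- and your computation implicitly corrects it, since the change of variables $r=1-u$ indeed gives $[G(\ad v)w]\,e^v$. Second, the steps $\partial_s\Phi=(v+tw)\Phi$ and the ensuing product-rule cancellation are written as if $G$ sat inside an associative Banach algebra; that is true in the paper's application (the unitary group $\mathcal U_{\mathcal A}\subset\mathcal A$), but not for an abstract Banach--Lie group as in the lemma's hypotheses. In the general case one can replace that step by the left Maurer--Cartan identity $\partial_s B-\partial_t A+[A,B]=0$ applied to $A=\Phi^{-1}\partial_s\Phi=v+tw$ and $B=\Phi^{-1}\partial_t\Phi$ (interpreted via left translation to $\lie(G)$); at $t=0$ it yields the intrinsic linear ODE $\eta'(s)=w-[v,\eta(s)]$, whose unique solution with $\eta(0)=0$ is again $\eta(s)=\int_0^s e^{-u\,\ad v}w\,du$, so the conclusion is unchanged. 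The rest of the bookkeeping --- the integral representation $F(\ad v)w=\int_0^1 e^{-u\,\ad v}w\,du$, the identity $\Ad(e^v)=e^{\ad v}$, and pushing $e^v$ through the integral --- is correctly handled, and you rightly flag that boundedness of $\ad v$ is what lets the entire functions $F$ and $G$ act via norm-convergent series.
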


In the following proposition we assumme that $v,x,y$ are skew-adjoint and co-diagonal with respect to $P\in Gr(P_0)$.

\begin{teo}[Jacobi fields]\label{JF} Let $\gamma(t)=e^{tv}Pe^{-tv}$ be the unique geodesic of the reductive connection of $Gr(P_0)$ with $\gamma(0)=P$, $\gamma'(0)=[v,P]=V$. Let $X=[x,P], Y=[y,P]$. Then the unique Jacobi field $\mu$ along $\gamma$ with $\mu(0)=X$, $D_t\mu(0)=Y$ is given by
\begin{align*}
\mu(t)&= [x, e^{tv}Pe^{-tv}]+ e^{tv}[t F(t \ad v)y,P]e^{-tv}\\
& = e^{tv} \left\{ [\cosh(t  \ad v)x,P] +t[\sinhc(t\ad v)y,P]  \right\}e^{-tv}.
\end{align*}
\end{teo}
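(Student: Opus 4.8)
The plan is to verify directly that the proposed field $\mu$ satisfies the Jacobi equation $D_t^2\mu = R_\gamma(\gamma',\mu)\gamma'$ together with the initial conditions $\mu(0)=X$, $D_t\mu(0)=Y$; uniqueness of solutions of the linear second-order ODE (the Jacobi equation) then finishes the argument. The first observation is that it suffices to treat the two elementary building blocks separately: the field $\mu_1(t)=[x,e^{tv}Pe^{-tv}]$ (which should account for the $\mu(0)=X$, $D_t\mu(0)=0$ case after a correction) and $\mu_2(t)=e^{tv}[tF(t\ad v)y,P]e^{-tv}$, and then combine them by linearity. Actually the cleaner route is to first establish the second, more symmetric expression: using that the co-diagonal lift of $\gamma$ is $U(t)=e^{tv}$ (Remark \ref{pt}), the reductive covariant derivative along $\gamma$ becomes $D_t\mu = e^{tv}\,\frac{d}{dt}\!\big(e^{-tv}\mu\, e^{tv}\big)\,e^{-tv}$, i.e.\ in the trivialization $\tilde\mu(t)=e^{-tv}\mu(t)e^{tv}$ one simply has $\widetilde{D_t\mu}=\tilde\mu'$ and $\widetilde{D_t^2\mu}=\tilde\mu''$.

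Next I would transport the curvature term through the same trivialization. Since $\gamma'(t)=e^{tv}[v,P]e^{-tv}$, conjugating by $e^{-tv}$ turns $\gamma'$ into the constant $V=[v,P]$, and by equivariance of the curvature tensor (it is built from iterated brackets, formula \eqref{curvat}) the Jacobi equation $D_t^2\mu=R_\gamma(\gamma',\mu)\gamma' = -[[\gamma',\mu],\gamma']$ becomes, after trivializing, the constant-coefficient linear ODE $\tilde\mu'' = -[[V,\tilde\mu],V]$ for a curve $\tilde\mu(t)\in\C_P$. The key algebraic identity here is that on $\C_P$ one has $-[[V,\cdot],V] = (\ad v)^2$; this follows because $v=[V,P]$, $V=[v,P]$, and the operators $\ad v$ and $\ad V = \ad[v,P]$ agree on the relevant subspace (one expands both sides as in the proof of Lemma \ref{campos} and uses $\si_PV=-V\si_P$, $\si_P v=-v\si_P$). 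So the trivialized Jacobi equation is exactly $\tilde\mu'' = (\ad v)^2\tilde\mu$, whose solution with $\tilde\mu(0)=X$, $\tilde\mu'(0)=D_t\mu(0)$ read off from the trivialization — here one must be a little careful, since $\widetilde{D_t\mu}(0)=\tilde\mu'(0)= [x,[v,P]]\cdot(\text{correction}) + \ldots$, but in fact $\tilde\mu'(0)$ works out to involve $Y$ cleanly — is $\tilde\mu(t)=\cosh(t\ad v)X' + t\,\sinhc(t\ad v)Y'$ for appropriate $X',Y'\in\C_P$. Matching $[x,P]\mapsto$ the bracket form and undoing the trivialization gives the stated closed formula; the equivalence of the two displayed expressions for $\mu$ is then a short computation using $\cosh(t\ad v)+ \text{(the $\mu_1$ piece)}$ rearrangement, i.e.\ that $e^{-tv}[x,e^{tv}Pe^{-tv}]e^{tv} = [e^{-tv}xe^{tv},P]$ only after accounting for the $P$-diagonal part, which is where the two forms genuinely differ before projection.

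The main obstacle I expect is bookkeeping around the covariant derivative versus the ambient derivative: $\mu(t)$ as written takes values in $\A_h$ but its $P(t)$-diagonal part is not zero term-by-term, and one has to confirm that after the projection $\mathbf{P}_{\C_{\gamma(t)}}$ implicit in the reductive connection, the first display really has $\mu(0)=X$ and $D_t\mu(0)=Y$ (not $Y$ plus a spurious Christoffel correction). Concretely, differentiating $\mu_1(t)=[x,\gamma(t)]$ at $t=0$ gives $[x,[v,P]]=[x,V]$, which is $P$-diagonal, so its horizontal projection is $0$ — good — while $\mu_2$ contributes $D_t\mu_2(0)=Y$ after using $F(0)=1$ and the fact that $e^{tv}(\cdot)e^{-tv}$ has vanishing derivative of the conjugation at the level of the trivialized derivative. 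Verifying these initial conditions and the passage between the bracket form $[x,\gamma]$ and the conjugated form $e^{tv}[\cosh(t\ad v)x,P]e^{-tv}$ is the only genuinely delicate point; everything else is the constant-coefficient ODE $\tilde\mu''=(\ad v)^2\tilde\mu$ solved by $\cosh$ and $\sinhc$, exactly as in the classical Lie-theoretic computation, combined with the formulas for $D\exp$ recalled just above the statement.
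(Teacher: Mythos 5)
Your proposal takes a genuinely different route from the paper — you plan to verify the Jacobi ODE directly in the trivialization $\tilde\mu=e^{-tv}\mu e^{tv}$, whereas the paper never touches the curvature tensor: it constructs the explicit geodesic variation $\nu_s(t)=e^{sx}e^{t(v+sy)}Pe^{-t(v+sy)}e^{-sx}$, differentiates at $s=0$ using the formulas for $D\exp$, checks the initial conditions via the Christoffel formula of Proposition \ref{equalcon}, and then simplifies using the diagonal/co-diagonal grading. Unfortunately, the proposal as written hinges on an algebraic identity that is false.

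The claim ``on $\C_P$ one has $-[[V,\cdot],V]=(\ad v)^2$'', supported by the assertion that ``$\ad v$ and $\ad V=\ad[v,P]$ agree on the relevant subspace'', does not hold. Writing $V=\left(\begin{smallmatrix}0&\lambda\\\lambda^*&0\end{smallmatrix}\right)$ and $v=\left(\begin{smallmatrix}0&-\lambda\\\lambda^*&0\end{smallmatrix}\right)$ and taking $Z=\left(\begin{smallmatrix}0&\alpha\\\alpha^*&0\end{smallmatrix}\right)\in\C_P\cap\A_h$, a direct block computation gives
\[
-[[V,Z],V]=V^2Z+ZV^2-2VZV,\qquad (\ad v)^2Z=-V^2Z-ZV^2-2VZV,
\]
(using $v^2=-V^2$ and $vZv=VZV$), so they differ by $2(V^2Z+ZV^2)$. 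You can see the failure already in $Gr_1(\mathbb C^2)$: with $\lambda=1$ and $Z=iv=\left(\begin{smallmatrix}0&-i\\ i&0\end{smallmatrix}\right)$ one has $\ad v(Z)=0$, hence $(\ad v)^2Z=0$, while $-[[V,Z],V]=4Z\ne 0$. Likewise $\ad v$ and $\ad V$ do not agree on $\C_P\cap\A_h$: on the same $Z$, $\ad v(Z)=0$ but $\ad V(Z)\ne 0$. So the ``constant-coefficient ODE'' you write down is not the one the proposed field satisfies, and the argument does not close.

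What \emph{is} true, and is the identity you need to make this route work, is that the isomorphism $a\mapsto[a,P]$ from $\widetilde{\C}_P$ onto $\C_P\cap\A_h$ \emph{anti}-intertwines the two squares: $[(\ad v)^2a,\,P]=-(\ad V)^2[a,P]$. With this, the trivialized equation $\tilde\mu''=-(\ad V)^2\tilde\mu$ on $\C_P\cap\A_h$ pulls back to $a''=(\ad v)^2a$ on $\widetilde{\C}_P$, whose solutions are $\cosh(t\ad v)$ and $t\sinhc(t\ad v)$ as in the statement. But then a further pitfall appears: feeding the displayed curvature formula $R_P(X,Y)Z=-[[X,Y],Z]$ into the Jacobi equation $D_t^2\mu=R_\gamma(\gamma',\mu)\gamma'$ as stated gives $\tilde\mu''=-[[V,\tilde\mu],V]=(\ad V)^2\tilde\mu$, with the wrong sign relative to what the formula actually satisfies (and to what positive curvature demands — a computation in $Gr_1(\mathbb C^2)$ shows $\tilde\mu''=+[[V,\tilde\mu],V]$). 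Resolving that sign requires pinning down the sectional-curvature / $R$-conventions precisely, which the paper's variation argument sidesteps entirely. I would therefore recommend either fixing the identity, the sign, and the bookkeeping carefully, or switching to the paper's strategy of producing the field directly as $\partial_s|_{s=0}\nu_s(t)$ for a concrete geodesic variation.
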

\begin{proof}
Consider $\beta_s=e^{sx}Pe^{-sx}$, the geodesic with initial speed $X$, and let $\xi_s=P_0^s(\beta)(V+sY)\in T_{\beta_s}Gr(P_0)$, then $\xi_0=V$. Let $\nu_s(t)=\Exp_{\beta_s}(t\xi_s)$, which is clearly a variation of $\gamma$ by geodesics, so $\mu(t)=\frac{d}{ds}|_{s=0}\nu(s,t)$ is a Jacobi field along $\gamma$. Clearly also
$$
\mu(0)=\frac{d}{ds}\big|_{s=0}\nu(s,0)=\frac{d}{ds}\big|_{s=0}\Exp_{\beta_s}(0)=\frac{d}{ds}\big|_{s=0}\beta_s=\beta'(0)=X.
$$
Since $\xi_s=P_0^s(\beta)(V)+sP_0^s(\beta)(Y)$, we have that 
$$
\xi_0'=\Gamma_P(X,V)+Y+0=[X,[V,P]]+Y=[X,v]+Y.
$$
The first equality is due to the definition of paralell transport and the second is due to the formula for the connection given in Proposition \ref{equalcon}. The third equality is due to Lemma \ref{campos}(4). Now $\frac{d}{dt}|_{t=0}\nu(s,t)=\xi_s$ hence
\begin{align*}
D_t\mu(0) & =D_t|_{t=0}\nu_0'(t)  =\frac{d}{dt}\big|_{t=0}\frac{d}{ds}\big|_{s=0}\nu(s,t)- \Gamma_P(X,V)\\
&=\frac{d}{ds}\big|_{s=0}\xi_s -\Gamma_P(X,V)=\xi_0'-[X,v]=Y.
\end{align*}
By the uniqueness of solutions of differential equations in Banach spaces, $\mu$ is the unique Jacobi field along $\gamma$ with the prescribed condition. Now we compute $\mu$ explicitly: by Remark \ref{pt}, we have that  $\xi_s=[e^{sx}(v+sy)e^{-sx},\beta_s]$, hence 
$$
\nu_s(t)=e^{sx}e^{t(v+sy)}Pe^{-t(v+sy)}e^{-sx}.
$$
Now
\begin{align*}
\nu_0'(t) & =xe^{tv}Pe^{-tv}-e^{tv}Pe^{-tv}x+\exp_{*tv}(ty) Pe^{-tv}-e^{tv}P\exp_{*-tv}(-ty)\\
&  = [x, e^{tv}Pe^{-tv}]+ e^{tv}[t F(t \ad v)y,P]e^{-tv}\\
&=e^{tv}\left\{[e^{-t \ad v}x,P] + t [F(t \ad v)y,P]\right\}e^{-tv},
\end{align*}
where in the second equality (last term) we used the formulas for the differential of the exponential map in the Lie group of unitary operators. Now we have to recall that the Lie algebra of skew-adjoint operators has a grading, the Cartan decomposition of the algebra in diagonal and co-diagonal operators (with respect to $P$, see Remark \ref{csub}). The following identities
$$
e^{\lambda}=\cosh(\lambda)+\sinh(\lambda)\qquad  1-e^{-\lambda}=\sinh(\lambda)+ 1-\cosh(\lambda)
$$
gives us the diagonal-codiagonal decomposition of $e^{\ad v}$ and of $F(\ad v)$, and since the diagonal elements of $\A_{sk}\cap \mathcal D_P$ are in the kernel of $\ad P$, this finishes the proof.
\end{proof}

\begin{rem}[Killing fields]A vector field $X$ is Killing if its flow $\rho_t$ is an automorphism of the reductive connection. It is well-known that $X$ is Killing if and only if $X\circ \gamma$ is Jacobi along $\gamma$ for any geodesic $\gamma$ of the connection. Fix $P\in P_0$: if $\gamma(t)=e^{tv}Pe^{-tv}=e^{t\ad v}P$ as before, note that $X(Q)=[x,Q]$ (with $Q\in Gr(P_0)$) is a Killing field when $x^*=-x\in \C_P$, in fact it is the unique Killing field with $X(P)=[x,P]$. The flow of this Killing field is the one-parameter group of connection automorphisms
$$
\rho_t(Q)=e^{tx}Qe^{-tx}.
$$
\end{rem}

\section{Cut locus and conjugate locus}\label{scl}

In this section we discuss the cut locus and conjugate points along a geodesic, and compute the order of degeneracy of the first conjugate points. Then since the Hopf-Rinow theorem is not valid in infinite dimensions, we discuss existence and uniqueness of geodesics joining given endpoints.

\begin{defi}[Length and distance] We will measure the length of paths with the norm of the $C^*$-algebra, which is the spectral norm. Then the rectifiable distance $\dist_{\infty}$ is defined accordingly as the infima of the length of the paths joining given endpoints. This will be called the \textit{spectral distance} in $Gr(P_0)$ and denoted $\dist_{\infty}(P,Q)$ for $P,Q\in Gr(P_0)$. We say that a path is \textit{minimizing} if its length equals the distance among its endpoints.
\end{defi}

\begin{teo}[Minimizing geodesics]\label{min} If $V=[v,P]\in T_PGr(P_0)$ with $v\in \widetilde{\mathcal C}_P$ and $\|V\|=1$, then the unique geodesic $\gamma(t)=e^{tv}Pe^{-tv}$ from $P$ with initial speed $V$ is minimizing for $t\in [-\frac{\pi}{2},\frac{\pi}{2}]$. 
\end{teo}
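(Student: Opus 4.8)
The plan is to reduce the statement to the minimality of one-parameter subgroups in the unitary group $\U_{\A}$ for the bi-invariant spectral Finsler metric, transported to $Gr(P_0)$ through the embedding $R\mapsto \mathfrak s_R=2R-1$. First I would record that $\gamma$ has constant speed: by Remark \ref{pt} parallel transport along $\gamma$ is conjugation by the unitary $e^{tv}$, a spectral isometry of $\A$, so $\|\gamma'(t)\|=\|\gamma'(0)\|=\|V\|=1$ and hence $\Le(\gamma|_{[0,T]})=T$ for every $T>0$. Thus it suffices to prove the lower bound $\dist_\infty(P,\gamma(T))\ge T$ for $0<T\le \pi/2$; the interval $[-\pi/2,0]$ is handled identically after replacing $v$ by $-v$, so that the initial speed becomes $-V$, still of norm one.

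Next I would set up the symmetry embedding. The map $j:Gr(P_0)\to\U_{\A}$, $j(R)=\mathfrak s_R=2R-1$, is smooth and equivariant, and since $dj_R(X)=2X$ it multiplies the spectral length of any rectifiable path by exactly $2$ (note $2X$ is indeed tangent to $\U_\A$ at $\mathfrak s_R$, because $\mathfrak s_R X$ is skew-adjoint by $(\ref{anticonm})$). Along the geodesic, $\mathfrak s_{\gamma(t)}=e^{tv}\mathfrak s_P e^{-tv}$, and the anticommutation $e^{tv}\mathfrak s_P=\mathfrak s_P e^{-tv}$ coming from $(\ref{anticonmexp})$ turns this into $\mathfrak s_P e^{-2tv}$, so that $\mathfrak s_P\mathfrak s_{\gamma(t)}=e^{-2tv}$: under $j$ followed by left translation by $\mathfrak s_P$, the geodesic becomes the one-parameter subgroup $t\mapsto e^{-2tv}$ of $\U_{\A}$.

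Now take any piecewise-$C^1$ path $\delta:[0,1]\to Gr(P_0)$ with $\delta(0)=P$ and $\delta(1)=Q:=\gamma(T)$. The curve $s\mapsto \mathfrak s_P\,\mathfrak s_{\delta(s)}$ lies in $\U_{\A}$ (a product of two symmetries), joins $1$ to $\mathfrak s_P\mathfrak s_Q=e^{-2Tv}$, and has spectral length $2\,\Le(\delta)$, since left translation by the fixed unitary $\mathfrak s_P$ is an isometry of the spectral Finsler metric. Hence
$$
2\,\Le(\delta)\ \ge\ \dist_{\U_{\A}}(1,e^{-2Tv}).
$$
As $\|v\|=\|V\|=1$ we have $\|{-2Tv}\|=2T\le\pi$; granting that one-parameter subgroups of $\U_{\A}$ are minimizing for the spectral metric up to length $\pi$ (i.e. $\dist_{\U_\A}(1,e^{z})=\|z\|$ whenever $z^*=-z$ and $\|z\|\le\pi$), we get $\dist_{\U_\A}(1,e^{-2Tv})=2T$, whence $\Le(\delta)\ge T$. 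Taking the infimum over $\delta$ gives $\dist_\infty(P,Q)\ge T=\Le(\gamma|_{[0,T]})$, which proves minimality on $[-\pi/2,\pi/2]$.

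The one nontrivial input, and the main obstacle, is that last assertion about $\U_{\A}$. The upper bound $\dist_{\U_\A}(1,e^z)\le\|z\|$ is immediate from the curve $s\mapsto e^{sz}$ itself; the content is the matching lower bound. I would either cite it from the literature on Finsler geometry of unitary groups of $C^*$-algebras, or reprove it directly: pass to a faithful representation $\A\subset\mathcal B(\mathcal H)$, and for $\varepsilon>0$ choose a unit vector $\xi$ which is an $\varepsilon$-approximate eigenvector of the self-adjoint operator $-iz$ for a spectral value of modulus close to $\|z\|$; then estimate from below the length of the planar curve $s\mapsto \langle\alpha(s)\xi,\xi\rangle\in\overline{\mathbb D}$ (where $\alpha$ is any competitor path from $1$ to $e^z$ in $\U_\A$), which runs from $1$ to within $\varepsilon$ of $e^{\pm i\|z\|}$ and has derivative of modulus at most $\|\alpha'(s)\|$. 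The delicate point is that this curve need not remain on the unit circle and could a priori pass near the origin, so one must control its argument/winding number; that is precisely where the restriction $\|z\|\le\pi$ enters, and letting $\varepsilon\to0$ concludes. (Alternatively, one could bypass $\U_\A$ entirely and argue inside $Gr(P_0)$ that $\dist_\infty(P,Q)\ge \arcsin\|P-Q\|$, using that for the geodesic $\|P-\gamma(T)\|=\sin T$ for $T\le\pi/2$; the cost there is dealing with the non-smoothness of the norm along $\delta$.)
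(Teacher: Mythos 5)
The paper offers no argument of its own for this theorem: the proof is just the citation to Porta--Recht \cite{pr}, who establish the statement directly in the space of idempotents of a Banach algebra. Your route is genuinely different and is a known alternative: embed $Gr(P_0)$ into $\U_{\A}$ by $R\mapsto\mathfrak s_R=2R-1$, push forward a competitor path $\delta$ to $s\mapsto\mathfrak s_P\mathfrak s_{\delta(s)}\in\U_{\A}$, and appeal to the minimality of one-parameter subgroups of $\U_{\A}$ for the bi-invariant spectral norm up to parameter $\pi$. Every step of that reduction is correct: $dj_R=2\,\mathrm{id}$ so lengths exactly double; $\mathfrak s_P\mathfrak s_{\gamma(t)}=e^{-2tv}$ by (\ref{anticonmexp}); left translation by $\mathfrak s_P$ is a Finsler isometry; and $\|2Tv\|=2T\le\pi$ for $T\in[0,\pi/2]$. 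If you simply cite the $\U_{\A}$ ingredient, as you say you are willing to do, the argument is complete (though then it is no more self-contained than the paper's citation of Porta--Recht).

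The sketch you give for reproving the $\U_{\A}$ ingredient, however, has a real gap. Tracking the scalar $f(s)=\langle\alpha(s)\xi,\xi\rangle$ in the closed unit disk cannot yield $\dist_{\U_{\A}}(1,e^z)\ge\|z\|$: a piecewise smooth curve in the closed disk from $1$ to $e^{i\theta}$, $0<\theta\le\pi$, may have length as close as you like to the chord $|1-e^{i\theta}|=2\sin(\theta/2)<\theta$, and winding-number bookkeeping does not repair this, because the near-minimal competitor (a segment) does not wind at all, and because $|f(s)|$ has no positive lower bound, so arcs near the origin carry arbitrarily much argument at arbitrarily small length. The fix is to track the full unit vector $\alpha(s)\xi$ on the unit sphere of $\mathcal H$ rather than its inner product with $\xi$. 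One has $\|\tfrac{d}{ds}(\alpha(s)\xi)\|=\|\alpha'(s)\xi\|\le\|\alpha'(s)\|$, so the length of $s\mapsto\alpha(s)\xi$ on the sphere is at most $\Le(\alpha)$; and the intrinsic (great-circle) distance on the real unit sphere between $\xi$ and $e^{i\theta}\xi$ is $\arccos\re\langle\xi,e^{i\theta}\xi\rangle=\arccos\cos\theta=\theta$ for $0\le\theta\le\pi$, which is exactly where the restriction $\|z\|\le\pi$ enters. Taking $\xi$ an $\varepsilon$-approximate eigenvector of $-iz$ with eigenvalue $\theta$ of modulus $\|z\|$, noting $\|e^z\xi-e^{i\theta}\xi\|\le\|(-iz-\theta)\xi\|<\varepsilon$, and letting $\varepsilon\to0$ then gives $\Le(\alpha)\ge\|z\|$, closing the gap.
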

\begin{proof}
This result for the spectral norm was proved by Porta and Recht in \cite{pr}.
\end{proof}

Note that in particular the geodesic diameter of the Grassmannian of a $C^*$-algebra is greater or equal than $\pi/2$ (recall that we assumed throughout that $P_0$ is non-central).

\subsection{Cut locus}

\begin{defi} The \textit{cut locus}  $P\in Gr(P_0)$ is the set of points $Q\in Gr(P_0)$ such that geodesics from $P$ to $Q$ are not minimizing past $P$. The \textit{tangent cut locus} $TC_P\subset T_PM$  is the preimage of $C_P$ at $P$ by means of the exponential map i.e. $TC_P=\{V\in T_PM: \Exp_P(V)\in C_P\}$. 
\end{defi}

Following an idea in \cite{acl} we show next that in some cases the cut locus is exactly at the first tangent conjugate point (see Theorem \ref{t1cp} below), as in the classical setting. We recall that $\mathcal A$ has \textit{real rank zero} when the set of self-adjoint elements with finite spectrum is dense in the space of self-adjoint elements of $\mathcal A$. The first examples of such $C^*$-algebras are the von Neumann algebras or the compact operators on a separable Hilbert space.

\begin{teo}\label{notmin} Asumme that $\mathcal A$ has real rank zero. Then unit speed geodesics of $Gr(P_0)$ are not minimizing past $|t|=\frac{\pi}{2}$.
\end{teo}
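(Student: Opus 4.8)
The plan is to show that for every $t_0>\pi/2$ the geodesic $\gamma(t)=e^{tv}Pe^{-tv}$ (with $V=[v,P]$, $v$ co-diagonal skew, $\|V\|=1$) fails to be minimizing on $[0,t_0]$. Since a sub-path of a minimizing geodesic is minimizing, and since the case $t_0<-\pi/2$ is symmetric, it is enough to treat $t_0$ in a right neighbourhood of $\pi/2$, and for such $t_0$ it suffices to produce a rectifiable path from $P$ to $\gamma(t_0)$ of spectral length strictly less than $t_0$. Writing $P=\left(\begin{smallmatrix}1&0\\0&0\end{smallmatrix}\right)$ and $V=\left(\begin{smallmatrix}0&\lambda\\\lambda^*&0\end{smallmatrix}\right)$ as in Remark \ref{blm}, we have $\|\lambda\|=1$, hence $1\in\sigma(|\lambda|)$. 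The heuristic, imported from the finite dimensional picture, is that the part of $\gamma$ lying in the spectral direction of $|\lambda|$ at the value $1$ is a copy of a great circle of a projective line, that such a great circle ceases to be minimizing exactly at $\pi/2$, and that this sub-geodesic carries the full unit speed, so the whole geodesic inherits the defect.

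The key point — and the place where real rank zero is used — is to replace ``the spectral direction of $|\lambda|$ at $1$'' by a genuine projection of $\mathcal A$. Note $|\lambda|\in(1-P)\mathcal A(1-P)$, a corner of $\mathcal A$, hence again of real rank zero; so $|\lambda|$ can be approximated in norm by an element $\mu$ with finite spectrum, and I would take $q\in(1-P)\mathcal A(1-P)$ to be the spectral projection of $\mu$ for its eigenvalues in a slab $(a,\|\mu\|]$, with $a$ close to $1$ and $a$ in a gap of $\sigma(\mu)$. Since $1\in\sigma(\mu)$ (approximately), $q\neq 0$; and because $1-q$ is again a spectral projection of $\mu$, one gets both $|\lambda|$ uniformly close to $1$ on $\operatorname{ran}q$ and a bona fide gap $\|\lambda(1-q)\|\le 1-\eta<1$ on the complement.

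With $q$ in hand I would let $w$ be the polar part of $\lambda q$: this lies in $\mathcal A$ because $|\lambda q|=(q|\lambda|^2q)^{1/2}$ is invertible in $q\mathcal A q$; put $e=ww^*\le P$ and $r=e+q$. Define $\tilde v$ to be block-diagonal with respect to $r$, equal to $\left(\begin{smallmatrix}0&-w\\w^*&0\end{smallmatrix}\right)$ on the $r$-corner and equal to $v$ off it. A careful chain of operator estimates — exploiting that $\|[q,|\lambda|]\|$ is small and the block structure, so that every error is $O(\|\mu-|\lambda|\|)$ (the naive bounds only give $O(\|\mu-|\lambda|\|^{1/2})$, which is not enough) — yields $\|\tilde v-v\|$ small, $\|\tilde v\|=1$, $[\tilde v,r]=0=[P,r]$, $\tilde v^2=-r$ on the $r$-corner, and $\|(1-r)\tilde v(1-r)\|\le 1-\eta$. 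Consequently $\tilde\gamma(t)=e^{t\tilde v}Pe^{-t\tilde v}$ splits as $\tilde\gamma_0(t)\oplus\tilde\gamma_1(t)$ on $r\mathcal A r\oplus(1-r)\mathcal A(1-r)$, with $\tilde\gamma_0$ a unit-speed closed geodesic of period $\pi$ (so $e^{\pi\tilde v_0}=-r$). Then $\tilde\gamma_0(t_0)=\tilde\gamma_0(t_0-\pi)$ is reached from the $r$-part of $P$ by a geodesic of length $\pi-t_0<\pi/2$; splicing this with $\tilde\gamma_1|_{[0,t_0]}$, of length $\le t_0(1-\eta)<t_0$, and using that the two corners are orthogonal so the spectral speed of the spliced path is the maximum of the two, gives a path from $P$ to $\tilde\gamma(t_0)$ of length $\le\max(\pi-t_0,\,t_0(1-\eta))<t_0$. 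Finally, since $\|e^{t_0\tilde v}-e^{t_0 v}\|\le t_0\|\tilde v-v\|$, the projections $\tilde\gamma(t_0)$ and $\gamma(t_0)$ are close, so Lemma \ref{section} provides a short path between them, and concatenating everything gives $\dist_\infty(P,\gamma(t_0))<t_0$.

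The hard part is exactly this last balancing act: the cost of ``rounding'' $\lambda q$ to $w$ and of replacing $v$ by $\tilde v$ is, in order of magnitude, comparable to the length one saves, so the constants must be watched, and if a single slab leaves too little room one must instead decompose $\sigma(\mu)$ into several thin slabs, reroute each of the corresponding near-circular sub-geodesics, and check that the accumulated closure defects remain negligible. (An alternative packaging of the same ideas would be to prove directly, under real rank zero, that $\dist_\infty(P,Q)\le\pi/2$ for all $P,Q\in Gr(P_0)$, from which the statement is immediate; this runs into the same quantitative issue.) Everything else — the reduction, the $2\times2$-block computations, the splitting of $\tilde\gamma$, and the identity $\tilde\gamma_0(t_0)=\tilde\gamma_0(t_0-\pi)$ — is routine.
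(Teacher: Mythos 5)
Your proposal is a genuinely different route from the paper's, and it is incomplete: you yourself flag the crucial gap. The ``balancing act'' — showing that the savings from rerouting the near-circular block dominates the $O(\|\mu-|\lambda|\|)$ cost of passing from $v$ to $\tilde v$, of replacing $\lambda q$ by its polar part $w$, and of bridging $\tilde\gamma(t_0)$ to $\gamma(t_0)$ via Lemma~\ref{section} — is the actual content of the theorem. Both the error $\|\tilde v - v\|$ and the savings $t_0\eta$ from the gap in $\sigma(|\lambda|(1-q))$ are governed by the same choice of $\mu$ and slab, so there is no a priori reason that the error can be made much smaller than the savings; your caveat about slicing $\sigma(\mu)$ into many slabs and ``checking that the accumulated closure defects remain negligible'' is an open step, not a routine one. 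In addition, the lower bound making $q|\lambda|^2 q$ invertible in $q\mathcal A q$ (so that $w\in\mathcal A$) is itself of the order of $a-\|\mu-|\lambda|\|$ and feeds into the same circle of estimates. As it stands, the argument is a plausible program rather than a proof.

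The paper's proof is shorter and sidesteps all of this by applying real rank zero to the \emph{exponent} rather than to $|\lambda|$. Fix $t_0>\pi/2$, set $v_0=t_0v$ and $Q=\gamma(t_0)$, and choose skew-adjoint $v_n\in\mathcal A$ with finite spectrum and $\|v_n-v_0\|\to 0$; put $Q_n=e^{v_n}Pe^{-v_n}$. The decisive observation is that to bound $\dist_\infty(P,Q_n)$ one only needs a rectifiable competitor, not a geodesic, and the curve $\beta(t)=e^{tz}Pe^{-tz}$, $t\in[0,1]$, has constant spectral speed $\|[z,P]\|\le\|z\|$ for \emph{any} skew-adjoint $z$. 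Because $v_n$ has finite spectrum, its continuous functional calculus contains the spectral truncation: one may replace $v_n$ by a reduced $z_n\in\mathcal A$ whose spectrum lies in $i[-\tfrac{\pi}{2},\tfrac{\pi}{2}]$ and which still carries $P$ to $Q_n$ (the relevant invariant is $\mathfrak{s}_{Q}=e^{2v}\mathfrak{s}_P$, equation~(\ref{anticonmexp}), so only the exponent modulo $i\pi\mathbb Z$ matters), hence $\dist_\infty(P,Q_n)\le\|z_n\|\le\pi/2$. Since $Q_n\to Q$ in norm and $\dist_\infty$ is continuous, $\dist_\infty(P,\gamma(t_0))\le\pi/2<t_0$. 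There is no error term to balance, because the spectral reduction is performed exactly on the finite-spectrum approximant. Your closing remark that the direct route ``runs into the same quantitative issue'' is therefore not right: it does not, once one approximates the exponent $v$ instead of $|\lambda|$ and uses a non-geodesic competitor whose length is read off from $\|z_n\|$.
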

\begin{proof}
Let $\gamma(t)=e^{tv}Pe^{-tv}$ with $\|v\|=1$, assumme that $t_0>\pi/2$, let $v_0=t_0 v$, then $\|v_0\|>\pi/2$. Let  $Q=\gamma(t_0)=e^{v_0}Pe^{-v_0}$. Let $v_n^*=-v_n\in \mathcal A$ be such that $v_n$ has finite spectrum and $\|v_n-v_0\|<\frac{1}{n}$, let $Q_n=e^{v_n}Pe^{-v_n}$.  Consider the truncation $z_n$ of $v_n$ into the interval $i[-\frac{\pi}{2},-\frac{\pi}{2}]$, i.e. for $k\in \mathbb N_0$ let 
\[ f(x)=\left\{
\arraycolsep=-1pt\def\arraystretch{1.2}\begin{array}{lcc}
 x +(k+1)\frac{\pi}{2}  & \,\, \, \, \, \, \,  \, \, \, \, \, -2(k+2)\frac{\pi}{2} \leq x < -(k +1)\frac{\pi}{2} \\
 x & \, \, \, \, \, \,  \, \, \, \, \, -\frac{\pi}{2} \leq x \leq \frac{\pi}{2} \\
  x-(k+1)\frac{\pi}{2} & \, \, \, \, \, \, \, \, \, \quad\; (k+1)\frac{\pi}{2} < x \leq (k+2)\frac{\pi}{2}
\end{array}
\right.,
\]
and let $z_n=if(-i v_n)=-z_n^*$. Since the spectrum of $v_n$ is finite, $z_n\in \mathcal A$, and moreover $\|z_n\|\le \pi/2$. It is plain also that $e^{z_n}=e^{v_n}$, thus $Q_n=e^{z_n}Pe^{-z_n}$ and if we let $\beta(t)=e^{tz_n}Pe^{-tz_n}$ then 
\begin{align*}
\dist_{\infty}(P,Q_n) & \le L_0^1(\beta)=\|z_nP-Pz_n\|\le \max\{\|Pz_n(1-P)\|,\|(1-P)z_nP\|\}\\
& \le \|z_n\|\le \pi/2<\|v_0\|=L_0^{t_0}(\gamma).
\end{align*}
Since $Q_n\to Q$, we are done.
\end{proof}

\begin{rem}[Cut locus]\label{nohayejem}
See also Corollary \ref{corocl} below for another particular case. A related question is about the rectifiable diameter of $Gr(P_0)$; this relation is patent in the Riemanian setting because the cut locus of $P$ consists exactly of the points that are either conjugate to $P$ or the points such that there exist two minimizing geodesics from $P$ arriving at the point). The rectifiable diameter of $Gr(P_0)$ is a relevant invariant of $C^*$-algebras, related to the \textit{exponential length}, see for instance \cite{ncphil} and the references therein. In particular it is shown there that if $\mathcal A$ has real rank zero and the cancellation property, then in fact the diameter of $Gr(P_0)$ is $\frac{\pi}{2}$ \cite[Theorem 3.2]{ncphil}. It is also worth mentioning here that if $\mathcal A$ is  purely infinite simple then this rectifiable diameter is exactly $\pi$ \cite[Theorem 3.3]{ncphil}.
\end{rem}

Now we consider the enveloping von Neumann algebra of $\mathcal A$, let $P\wedge P'$ denote the infimum of the projections (again a projection) and let $\sim$ denote the Murray-von Neumann equivalence of projections. Then a full characterization of points that can be joined with a geodesic was obtained by Andruchow in \cite{andru1} as follows:
\begin{teo}Let $P,Q\in Gr(P_0)$ with $\mathcal A$ a von Neumann algebra. Then there exists a geodesic joining $P,Q$ if and only if 
$$
P\wedge (1-Q)\sim Q\wedge (1-P).
$$
In this case there exist a minimizing geodesic joining them. Moreover, the geodesic is unique if and only if $P\wedge (1-Q)=0$.
\end{teo}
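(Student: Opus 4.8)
The plan is to reduce the existence of a geodesic to a question about logarithms of the symmetry $\mathfrak{s}_Q\mathfrak{s}_P$ and then carry out spectral bookkeeping. The key identity is that for a $P$-codiagonal skew $v$, equation \eqref{anticonmexp} gives $e^{v}\mathfrak{s}_P e^{-v}=e^{2v}\mathfrak{s}_P$; hence the geodesic $\gamma(t)=e^{tv}Pe^{-tv}$ (with $v^{*}=-v\in\C_P$) joins $P$ to $Q$ if and only if $e^{2v}=U$, where $U:=\mathfrak{s}_Q\mathfrak{s}_P\in\A$ is unitary and satisfies $\mathfrak{s}_PU\mathfrak{s}_P=U^{*}$. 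So the statement becomes: decide when $U$ has a $P$-codiagonal skew-adjoint logarithm $2v$ inside $\A$, and count them. First I would locate the point mass of $U$ at $-1$: a short manipulation gives $\ker(U+1)=\ker(P+Q-1)$, and splitting $\xi\in\ker(P+Q-1)$ as $P\xi+(1-P)\xi$ one checks $P\xi\in\ran(P\wedge(1-Q))$ and $(1-P)\xi\in\ran(Q\wedge(1-P))$, the reverse inclusions being immediate. Thus $N:=\ker(U+1)$ reduces $P$ and $PN=\ran(P\wedge(1-Q))$, $(1-P)N=\ran(Q\wedge(1-P))$; consequently the hypothesis $P\wedge(1-Q)\sim Q\wedge(1-P)$ says precisely that $PN\sim(1-P)N$ in $\A$.

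For necessity: if such a $v$ exists, then $v$ commutes with $U=e^{2v}$, hence with the spectral projection of $U$ onto $N$, so $N$ reduces $v$; writing $v|_N=\bigl(\begin{smallmatrix}0&-\mu\\\mu^{*}&0\end{smallmatrix}\bigr)$ for $N=PN\oplus(1-P)N$, the relation $e^{2v|_N}=-1|_N$ forces, through the diagonal block $\cos\bigl(2(\mu\mu^{*})^{1/2}\bigr)=-1$, the spectra of $(\mu\mu^{*})^{1/2}$ and $(\mu^{*}\mu)^{1/2}$ into $\{(2k+1)\pi/2:k\ge 0\}$; both are therefore invertible, so the polar part of $\mu$ is a unitary in $\A$ from $(1-P)N$ onto $PN$, giving $P\wedge(1-Q)\sim Q\wedge(1-P)$.

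For sufficiency and minimality I would build $v$ separately on $N^{\perp}$ and $N$, both of which reduce $P$. On $N^{\perp}$ the point spectrum of $U$ misses $-1$, so $2v:=\operatorname{Log}U$ (principal branch, values in $i(-\pi,\pi)$, via the Borel calculus) is bounded of norm $\le\pi$; it is skew since $U$ is normal and $P$-codiagonal since $\mathfrak{s}_P(\operatorname{Log}U)\mathfrak{s}_P=\operatorname{Log}(\mathfrak{s}_PU\mathfrak{s}_P)=\operatorname{Log}(U^{*})=-\operatorname{Log}U$. On $N$, pick (by the hypothesis) a unitary $W\in\A$ from $(1-P)N$ onto $PN$ and set $2v|_N:=\pi\bigl(\begin{smallmatrix}0&-W\\W^{*}&0\end{smallmatrix}\bigr)$, so that $(2v|_N)^{2}=-\pi^{2}\,1|_N$, $e^{2v|_N}=-1|_N$ and $\|2v|_N\|=\pi$. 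The assembled $v^{*}=-v\in\C_P\cap\A$ has $e^{2v}=U$, hence $e^{v}Pe^{-v}=Q$, and $\|[v,P]\|=\|v\|\le\pi/2$ by Remark \ref{blm}; by Theorem \ref{min} (after rescaling the parameter) the geodesic $e^{tv}Pe^{-tv}$, $t\in[0,1]$, is minimizing, and since $\A$ has real rank zero every minimizing geodesic joining $P,Q$ has length $\le\pi/2$ by Theorem \ref{notmin}.

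For uniqueness: if $P\wedge(1-Q)=0$ then (being equivalent to it) $Q\wedge(1-P)=0$ too, so $N=0$ and $-1\notin\sigma_p(U)$. For any geodesic of length $\le\pi/2$ joining $P,Q$, the spectral measure of $-iv$ is supported in $[-\pi/2,\pi/2]$ and gives no mass to $\pm\pi/2$ (otherwise $U=e^{2v}$ would have $-1$ in its point spectrum); since $\phi\mapsto e^{2i\phi}$ is injective on $(-\pi/2,\pi/2)$, $v=\tfrac12\operatorname{Log}U$ is forced, and the minimizing geodesic is unique. Conversely, if $P'=P\wedge(1-Q)\ne 0$, replacing $W$ above by $uW$ for a unitary $u$ of the corner $P'\A P'$ keeps $(2v|_N)^{2}=-\pi^{2}\,1|_N$ but changes $v$, producing distinct minimizing geodesics from $P$ to $Q$ (the scalars $u=e^{i\alpha}P'$ already give a circle of them, and $u=-P'$ works in the real case). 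The main obstacle I anticipate is the functional-calculus bookkeeping where the spectrum of $U$ accumulates at $-1$ without meeting it: one must use the Borel rather than the continuous calculus, and check that every operator produced, in particular the partial isometries implementing $\sim$, stays inside the von Neumann algebra $\A$; this is exactly where the hypothesis that $\A$ is a von Neumann algebra is used.
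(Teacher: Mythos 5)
The paper does not prove this theorem itself --- it quotes it from Andruchow's 2021 PAMS paper \cite{andru1} --- so there is no in-paper proof to compare against. That said, your argument is correct and is, as far as the structure of the problem admits, the natural route (and, I believe, essentially the route of \cite{andru1}): reduce the geodesic equation $e^{v}Pe^{-v}=Q$ to finding a $P$-codiagonal skew-adjoint logarithm of $U=\mathfrak{s}_Q\mathfrak{s}_P$, identify $\ker(U+1)=\ker(P+Q-1)$ as the Halmos "exceptional" part splitting into $\ran(P\wedge(1-Q))\oplus\ran(Q\wedge(1-P))$, and handle the two pieces $N$ and $N^{\perp}$ separately. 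All the individual steps check out: the identity $2+U+U^{*}=4(P+Q-1)^{2}$ gives the kernel identification; $N$ reduces $P$, $Q$, $U$ and any logarithm $v$; on $N$ the diagonal blocks of $e^{2v|_{N}}=-1$ force $\mu$ to be invertible and its polar part to implement the Murray--von~Neumann equivalence; on $N^{\perp}$ the Borel principal logarithm is bounded by $\pi$, skew, and $P$-codiagonal since $\mathfrak{s}_PU\mathfrak{s}_P=U^{*}$; the assembled $v$ has $\|v\|\le\pi/2$, hence minimizing by Theorem \ref{min}; Theorem \ref{notmin} (von Neumann algebras have real rank zero) caps the length of any minimizing geodesic at $\pi/2$; and on $\{-\pi/2<\theta<\pi/2\}$ the map $\theta\mapsto e^{2i\theta}$ is injective, forcing $v=\tfrac12\operatorname{Log}U$ when $N=0$, while a nontrivial corner $P'=P\wedge(1-Q)$ supplies unitaries $u$ (a circle of them in the complex case, $\pm P'$ in the real case) producing distinct $v$'s of the same norm. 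You correctly isolate the place where the von Neumann hypothesis is needed (spectral projection onto $N$, Borel rather than continuous calculus, closure under the polar part). One small remark: the "uniqueness'' clause of the theorem should be read as uniqueness among minimizing geodesics (equivalently, among $v$ with $\|v\|\le\pi/2$), which is exactly what you prove; uniqueness among all geodesics is vacuously false, e.g.\ for $P=Q$.
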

In finite dimensional algebras, or in algebras of compact operators, the condition is automatically fulfilled, see \cite{andru1}. If $\mathcal A$ is not a von Neuman algebra, and the condition is fullfilled, the geodesic might not have speed in $\mathcal A$, so some caution is required. But by considering the enveloping von Neumann algebra of the $C^*$-algebra $\mathcal A$, it follows that if there exist a geodesic joining $P,Q$, then the condition must be fullfilled. That the condition is expressed in terms of the weak closure of $\mathcal A$ is a phenomena that will reappear soon (Remark \ref{polard} and Theorem \ref{mononcon} below).  We now discuss uniqueness of geodesics, and show that before the first cut locus they are unique as in the Riemannian setting.

\begin{teo}\label{cl} Let $V\in T_PGr(P_0)$, let $\gamma$ be  the unique geodesic from $P$ with initial speed $V$. Then
\begin{enumerate}
\item If $t\|V\|<\pi/2$, then the only minimizing geodesic joining $P,Q=\gamma(t)$ is $\gamma$.
\item If $t_0\|V\|=\frac{\pi}{2}$ and either $\pm\frac{\pi}{2}$ is an eigenvalue of $t_0 V$, then there is another minimizing geodesic $\gamma_1\subset \mathcal A''$ joining $P$ to $Q=\gamma(t_0)$. Moreover, if the eigenvalue is isolated, then $\gamma_1\subset \mathcal A$ and $\gamma$ is not minimizing past $t_0$.
\end{enumerate}
\end{teo}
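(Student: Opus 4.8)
The plan is to treat the two statements separately. In both I will use that, by the rescaled form of Theorem \ref{min}, $\dist_{\infty}(P,\gamma(t))=t\|V\|$ whenever $t\|V\|\le\pi/2$, and that Dixmier's direct rotation is unique: if $\|P-Q\|<1$ there is a \emph{unique} skew-adjoint codiagonal $x$ with $\|x\|<\pi/2$ and $e^xPe^{-x}=Q$ (recalled in the Introduction).

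For (1): any minimizing geodesic from $P$ to $Q=\gamma(t)$, reparametrized on $[0,1]$, has by the Corollary after Proposition \ref{equalcon} the form $\gamma_1(r)=e^{rw}Pe^{-rw}$ with $w^*=-w$ codiagonal and $\|w\|=\dist_{\infty}(P,Q)=t\|V\|<\pi/2$. From the explicit form of $\gamma$ (the Remark after the definition of $\sinc,\sinhc$) one reads off $\|P-Q\|=\sin(t\|V\|)<1$. Since $w$ and $tv$ are both skew-adjoint codiagonal of norm $<\pi/2$ conjugating $P$ to $Q$, uniqueness of the direct rotation forces $w=tv$, so $\gamma_1$ is a reparametrization of $\gamma$.

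For (2): let $F$ be the spectral projection of $V^2$ (equivalently of $|V|$) at the eigenvalue $\|V\|^2=(\pi/(2t_0))^2$; the hypothesis means precisely $F\ne0$. As $V^2$ is $P$-diagonal, $F$ commutes with $P$ and with $V$, so $V^{(1)}:=VF$ and $V^{(0)}:=V-VF$ are commuting, self-adjoint, codiagonal, with $V^{(0)}V^{(1)}=0$, $\|V^{(1)}\|=\|V\|$ and $(V^{(0)}-V^{(1)})^2=V^2$. Putting $v^{(i)}=[V^{(i)},P]$ one gets $v=v^{(0)}+v^{(1)}$, $v^{(0)}v^{(1)}=v^{(1)}v^{(0)}=0$, and $(t_0v^{(1)})^2=-(\pi/2)^2F$, whence $e^{2t_0v^{(1)}}=1-2F$, which commutes with $P$; this yields $e^{t_0v^{(1)}}Pe^{-t_0v^{(1)}}=e^{-t_0v^{(1)}}Pe^{t_0v^{(1)}}$ and therefore
$$
Q=e^{t_0v}Pe^{-t_0v}=e^{t_0v^{(0)}}e^{t_0v^{(1)}}Pe^{-t_0v^{(1)}}e^{-t_0v^{(0)}}=e^{t_0(v^{(0)}-v^{(1)})}Pe^{-t_0(v^{(0)}-v^{(1)})}.
$$
Thus $\gamma_1(t):=e^{t(v^{(0)}-v^{(1)})}Pe^{-t(v^{(0)}-v^{(1)})}$ is a geodesic with $\gamma_1(0)=P$, $\gamma_1(t_0)=Q$, initial speed $V^{(0)}-V^{(1)}\ne V$ (since $VF\ne0$) and $\|V^{(0)}-V^{(1)}\|=\|V\|$, hence minimizing on $[0,t_0]$ and distinct from $\gamma$. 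Its speed lies in $\mathcal A$ exactly when $F\in\mathcal A$; in general only $F\in\mathcal A''$, giving $\gamma_1\subset\mathcal A''$, while if the eigenvalue $\|V\|$ is isolated in $\sigma(|V|)$ then $F$ is a Riesz projection, so $F\in\mathcal A$ and $\gamma_1\subset\mathcal A$. Finally, still assuming isolatedness, to see $\gamma$ is not minimizing past $t_0$ take $t_0<t_1<2t_0$ and $w:=t_1v^{(0)}-(2t_0-t_1)v^{(1)}$. Using that on the range of $F$ the curve $s\mapsto e^{sv^{(1)}}Pe^{-sv^{(1)}}$ depends only on $s\pi/(2t_0)$ modulo $\pi$ (read off from the explicit geodesic formula), one checks $e^{-(2t_0-t_1)v^{(1)}}Pe^{(2t_0-t_1)v^{(1)}}=e^{t_1v^{(1)}}Pe^{-t_1v^{(1)}}$, so $e^wPe^{-w}=\gamma(t_1)$; as $v^{(0)},v^{(1)}$ are orthogonal, $\|w\|=\max\{t_1\|V^{(0)}\|,(2t_0-t_1)\|V\|\}<t_1\|V\|=L_0^{t_1}(\gamma)$, the strict inequality using $t_1>t_0$ and $\|V^{(0)}\|<\|V\|$ (isolatedness). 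So $r\mapsto e^{rw}Pe^{-rw}$ beats $\gamma|_{[0,t_1]}$, and since subpaths of minimizing paths are minimizing, $\gamma$ is not minimizing beyond $t_0$.

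The main obstacle is the construction in (2): spotting the ``reversed rotation on the $\pm\pi/2$-eigencomponent'' (the projective-line fact that a half-turn equals its inverse, encoded here in $e^{2t_0v^{(1)}}=1-2F$), and keeping track of whether the spectral projection $F$ sits in $\mathcal A$ or only in $\mathcal A''$ — which is exactly what the isolated-eigenvalue hypothesis governs, and what separates the two regimes (alternative geodesic in the bidual, versus alternative geodesic in $\mathcal A$ together with failure of minimality).
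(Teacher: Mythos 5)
Your proof is correct and, for part (2), takes a genuinely different and arguably cleaner route than the paper. In part (1), the mechanism is essentially the same: you reduce to uniqueness of a small-norm codiagonal skew-adjoint exponent conjugating $P$ to $Q$, and invoke Dixmier's uniqueness of the direct rotation, whereas the paper reproves that uniqueness from the identity $2\gamma(t)-1=e^{2tv}\mathfrak s_P$ (so that $\gamma(t)=\gamma_1(t)$ forces $e^{2tv}=e^{2tv_1}$) together with injectivity of $\exp$ on the ball of radius $\pi$; these are two packagings of the same fact.

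In part (2) your approach diverges more substantially. The paper chooses a single unit eigenvector $\xi_1$ with $v\xi_1=i\tfrac{\pi}{2}\xi_1$ and works with the rank-one projections $p_\pm=\xi_1\otimes\xi_1$, $\mathfrak s_P\xi_1\otimes\mathfrak s_P\xi_1$, setting $v_1=v_\perp-i\tfrac{\pi}{2}(p_+-p_-)$; only in the isolated case does it pass to the full eigenprojection. You instead take $F$ to be the \emph{full} spectral projection of $V^2$ at $\|V\|^2$, decompose $V=V^{(0)}+V^{(1)}$ with $V^{(1)}=VF$, and flip $V^{(1)}$. This is cleaner in two respects. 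First, $F$ belongs to $\mathcal A''$ automatically (it is a spectral projection of an element of $\mathcal A$), whereas for a general faithful representation a rank-one projection $\xi_1\otimes\xi_1$ need not lie in $\mathcal A''$, so the paper's claim ``$p_+\in\mathcal A''$'' is delicate in general and your version sidesteps this. Second, your non-minimality estimate does not need to fine-tune an $\varepsilon$: since $v^{(0)}v^{(1)}=v^{(1)}v^{(0)}=0$, for any $t_1\in(t_0,2t_0)$ the element $w=t_1v^{(0)}-(2t_0-t_1)v^{(1)}$ satisfies $e^wPe^{-w}=\gamma(t_1)$ and $\|w\|=\max\{t_1\|V^{(0)}\|,(2t_0-t_1)\|V\|\}<t_1\|V\|$, using only $t_1>t_0$ and $\|V^{(0)}\|<\|V\|$ (the latter from isolatedness). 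The key algebraic identity in both arguments is the same — the half-period symmetry $e^{2t_0v^{(1)}}=1-2F$ commuting with $P$ — so the underlying geometric picture (reverse the rotation on the top eigenspace) is shared; what differs is which projection realizes it and, consequently, how seamlessly the $\mathcal A''$ versus $\mathcal A$ dichotomy falls out.
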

\begin{proof}
 First we prove the second assertion. To simplify the notation, after rescaling $V=[v,P]$ we can assumme that $t_0=1$. By Lemma \ref{campos}(2) we have that both $\pm i\pi/2\in \sigma(v)$. Let $\xi_1\in\mathcal H$ of unit norm be such that $v\xi_1=i\frac{\pi}{2}\xi_1$, let $p_+=\xi_1\otimes \xi_1$ then $p_+\in\mathcal A''$. Let $\xi_2=\mathfrak s_P\xi_1$, then  $p_-=\xi_2\otimes\xi_2=\mathfrak s_P p_+\mathfrak s_P\in\mathcal A''$ and since $\mathfrak s_pv=-v\mathfrak s_p$ (Lemma \ref{cl})  $vp_-=-i\frac{\pi}{2}p_-$. Now $v^2p_{\pm}=-\frac{\pi^2}{4}p_{\pm}$ hence $|v|p_{\pm}=\frac{\pi}{2}p_{\pm}$ and $up_{\pm}=\pm ip_{\pm}$. Eigenvalues corresponding to different eigenvectors are orthogonal, hence $p_+p_-=0$. Write
$$
|v|=|v|(1-(p_++p_-))+|v|(p_++p_-)=|v|(1-(p_++p_-))+\frac{\pi}{2}(p_++p_-)
$$
then
$$
v=u|v|=u|v|(1-(p_++p_-))+i\frac{\pi}{2}(p_+-p_-)=v_{\perp}+i\frac{\pi}{2}(p_+-p_-),
$$
with $v_{\perp}p_+=0=v_{\perp}p_-$ and $\|v_{\perp}\|\le \pi/2$. Now 
$$
\mathfrak s_P v(1-p_++p_-)=-v \mathfrak s_P(1-(p_++p_-))=-v (1-(p_++p_-))\mathfrak s_P
$$
and
$$
\mathfrak s_P (p_+-p_-)=\mathfrak s_Pp_+-p_+\mathfrak s_P =-(p_+-p_-)\mathfrak s_p
$$
hence both $v_{\perp}, i\frac{\pi}{2}(p_+-p_-)\in \widetilde{\mathcal C}_P$. Hence we can consider 
$$
v_1=v_{\perp} -\frac{i\pi}{2}(p_+-p_-)\in \mathcal A'',
$$
which is skew-adjoint and $P$-codiagonal, with $\|v_1\|=\|v\|=\pi/2$. Note also that 
$$
\exp(2v)=-p_+-p_-+e^{2v_{\perp}}(1-(p_++p_-))=\exp(2v_1).
$$
Let $\gamma_1(t)=e^{tv_1}Pe^{-tv_1}\subset \mathcal A''$, then by (\ref{anticonmexp}) we have that 
$$
2\gamma(1)-1=e^{v}2Pe^{-v}-1=e^{v}\mathfrak s_P e^{-v}=e^{2v}\mathfrak s_P=e^{2v_1}\mathfrak s_P=2\gamma_1(1)-1.
$$
This shows that $\gamma(1)=\gamma_1(1)$ and since both geodesics have the same speed, they are both minimizing joining $P,Q$. Assumme now that $i\pi/2$ is an isolated eigenvalue, then by the symmetry of the spectrum so is $-i\pi/2$. Let $p_+$ be the full eigenprojection, which is now in $\mathcal A$.  Since $p_+=\mathfrak s_p \mathfrak s_p p_+=\mathfrak s_p p_-$, we see that $p_-$ is in fact the eigenprojection for the opposite eigenvalue, and it also follows that $\|v_{\perp}\|<\frac{\pi}{2}$. So let  $\delta=\frac{\pi}{2}-\|v_{\perp}\|>0$, and take any $\varepsilon>0$ such that $\varepsilon<\frac{\delta}{\pi-\delta}$. We claim that $\gamma$ is not minimizing in $[0,1+\varepsilon]$. Let
$$
v_2=(1-\varepsilon)i\frac{\pi}{2}(p_--p_+)+(1+\varepsilon)v_{\perp},
$$
then $v_2$ is $P$-codiagonal and $\|v\|_2=\max\{(1-\varepsilon)\frac{\pi}{2},(1+\varepsilon)\|v_{\perp}\|\}=(1-\varepsilon)\frac{\pi}{2}$ by our choice of $\varepsilon$. Let $\gamma_2$ be the geodesic from $P$ with initial speed $V_2=[v_2,P]$; a straightforward computation shows that $e^{2v_2}=e^{2(1+\varepsilon)v}$. Hence again using the trick of the symmetry $\mathfrak s_P$ we see that $\gamma_2(1)=\gamma(1+\varepsilon)$. But then
$$
\dist_{\infty}(P,\gamma(1+\varepsilon))\le L_0^1(\gamma_2)=\|v_2\|=(1-\varepsilon)\frac{\pi}{2} < (1+\varepsilon)\frac{\pi}{2}=L_0^{1+\varepsilon}(\gamma)
$$
thus $\gamma$ is not minimizing in $[0,1+\varepsilon]$. Finally,  we prove the first assertion: if $t\|V\|=t\|v\|<\pi/2$, then $\|2tv\|<\pi$. We have that $\gamma(t)=\gamma_1(t)$  for some geodesic $\gamma_1(t)=e^{tv_1}Pe^{-tv_1}$ with $\|v_1\|=\|v_0\|$ (by repeating the argument with the symmetry $\mathfrak s_P$) is only possible if $e^{2tv}=e^{2tv_1}$. But then by the injectivity of the exponential map in exponents of norm strictly less than $\pi$, we see that it must be $v_0=v_1$ hence $\gamma_1=\gamma$.
\end{proof}

\begin{rem}[Real case]  In the case of the real Grassmannians  the proof needs some adaptation: let $e_1=\mathfrak{Re}(\xi_1)$ and $e_2=\mathfrak{Im}(\xi_1)$, then $ve_1=-e_2$ and $ve_2=e_1$ so $ue_1=-e_2$ and $ue_2=e_1$ and we can write
$$
v=v_{\perp}+\frac{\pi}{2}(-e_1\otimes e_2+e_2\otimes e_1).
$$
Then we change the sign of the second term to obtain $v_1$ and the rest of the proof follows in the same fashion.
\end{rem}

\begin{coro}\label{corocl} Assume $P_0$ has finite rank or co-rank (in particular, any finite dimensional Grassmannian). If $P,Q\in Gr(P_0)$ and $\dist(P,Q)=\pi/2$, then there exist at least two minimizing geodesics joining them, and unit speed geodesics are not minimizing past $\pi/2$.
\end{coro}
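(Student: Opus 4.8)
The plan is to establish the two assertions separately, first reducing to the case that $P_0$ has finite rank $k$: since $P\mapsto 1-P$ is an isometry $Gr(P_0)\to Gr(1-P_0)$ carrying geodesics to geodesics, the finite corank case follows from the finite rank case.

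For the statement that unit speed geodesics are not minimizing past $\pi/2$, I would start from a unit speed geodesic $\delta(t)=e^{tw}P'e^{-tw}$ with $w\in\widetilde{\mathcal C}_{P'}$, $\|w\|=1$, and note that the off-diagonal block of $w$ in the $2\times2$ decomposition with respect to $P'$ (Remark \ref{blm}) maps into the $k$-dimensional space $\ran P'$, so $w$ has finite rank and hence finite spectrum. This is exactly the feature that the proof of Theorem \ref{notmin} extracts from the real rank zero hypothesis by approximation; so I would run that proof with $w$ itself playing the role of the finite-spectrum approximant, with no approximation step: for $t_0>\pi/2$, the sawtooth truncation of $-it_0w$ yields $z\in\widetilde{\mathcal C}_{P'}$ with $\|z\|\le\pi/2$ and $e^{z}P'e^{-z}=\delta(t_0)$, whence $\dist_\infty(P',\delta(t_0))\le\|[z,P']\|\le\|z\|\le\pi/2<t_0=L(\delta|_{[0,t_0]})$ and $\delta$ is not minimizing on $[0,t_0]$. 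In particular $\dist_\infty\le\pi/2$ on all of $Gr(P_0)$.

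For the existence of two minimizing geodesics when $\dist_\infty(P,Q)=\pi/2$, I would first show $\|P-Q\|=1$: otherwise Dixmier's direct rotation gives a $P$-codiagonal skew-adjoint $x\in\mathcal A$ with $e^xPe^{-x}=Q$ and $\|x\|<\pi/2$, and the geodesic $e^{tx}Pe^{-tx}$ — minimizing up to $t=1$ by Theorem \ref{min} after rescaling, and equal to $Q$ there — would force $\dist_\infty(P,Q)\le\|x\|<\pi/2$. Next, fixing a faithful representation and using Halmos' decomposition of $\mathcal H$ adapted to $P,Q$ (with generic part $\mathcal H_0=\mathcal K\oplus\mathcal K$, $\dim\mathcal K\le k$), I would compare the ranks of $P$ and $Q$ to get $\dim(\ran P\cap\ker Q)=\dim(\ker P\cap\ran Q)=:m$; since the finite dimensional generic part contributes norm $<1$ to $P-Q$, the equality $\|P-Q\|=1$ forces $m\ge1$. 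I would then assemble a $P$-codiagonal skew-adjoint $x$ which vanishes on $(\ran P\cap\ran Q)\oplus(\ker P\cap\ker Q)$, equals the generic direct rotation (of norm $<\pi/2$) on $\mathcal H_0$, and is a quarter turn carrying $\ran P\cap\ker Q$ onto $\ker P\cap\ran Q$ on the remaining $2m$-dimensional piece; then $e^xPe^{-x}=Q$, $\|x\|=\pi/2$, and $\pm i\pi/2$ is an eigenvalue of $x$, hence $\pm\pi/2$ is an eigenvalue of $V:=[x,P]$ (Remark \ref{blm} gives $|x|=|V|$). With $t_0=1$ we get $t_0\|V\|=\pi/2$ and $\pm\pi/2\in\sigma(t_0V)$, so Theorem \ref{cl}(2) furnishes a second minimizing geodesic joining $P$ and $Q$, distinct from $\gamma(t)=e^{tx}Pe^{-tx}$ (which is minimizing on $[-1,1]$ by Theorem \ref{min} and reaches $Q$ at $t=1$).

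The delicate point — and the one real obstacle — is that the quarter-turn piece of $x$, and with it the second geodesic produced by Theorem \ref{cl}(2), a priori lives only in the enveloping von Neumann algebra $\mathcal A''$. When $\mathcal A$ is a von Neumann algebra, in particular in the finite dimensional case which the statement chiefly has in mind, everything stays in $\mathcal A$; moreover $\pm i\pi/2$ is then an isolated point of $\sigma(x)$ (the finitely many generic angles lie in $[0,\pi/2)$), so Theorem \ref{cl}(2) also yields that $\gamma$ itself is not minimizing past $t_0$. For a general $C^*$-algebra one would instead have to realize $x$ inside $\mathcal A$, e.g.\ by approximating $Q$ within $Gr(P_0)$ by projections $Q_n$ with $\|P-Q_n\|<1$ (a dense set) and passing to a limit of the corresponding direct rotations; controlling that limit is where the argument needs the most care.
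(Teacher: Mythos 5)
Your argument is essentially correct and reaches the same conclusion via a genuinely different route, so a comparison is worthwhile. The paper's proof is shorter and has two ingredients: it \emph{cites} \cite[Section 3]{andru1} for the existence of a (minimizing) geodesic $\gamma$ joining $P$ to $Q$, observes that in the finite rank/co-rank case the off-diagonal block $\lambda$ of the speed $V$ is a compact operator, hence $\pm i\frac{\pi}{2}$ are isolated eigenvalues of $v$, and then applies Theorem~\ref{cl}(2) once, which simultaneously produces the second minimizing geodesic (in $\mathcal A$, thanks to isolatedness) and the non-minimality of $\gamma$ past $t_0$. You instead split the two assertions: for the ``not minimizing past $\pi/2$'' part you notice that the hypothesis of finite rank makes $w$ itself of finite rank, hence of finite spectrum, so the sawtooth truncation in the proof of Theorem~\ref{notmin} can be applied directly to $w$ with no approximation step — this is a neat observation that gives a self-contained, elementary proof of this half without appealing to Theorem~\ref{cl}(2) at all. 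For the ``two geodesics'' part you reconstruct the needed geodesic by hand via the Halmos five-fold decomposition (deducing $\|P-Q\|=1$, then $m=\dim(\ran P\cap\ker Q)=\dim(\ker P\cap\ran Q)\ge 1$, then assembling the direct rotation from the generic rotation plus an $m$-dimensional quarter turn), rather than citing \cite{andru1}. Your accounting of ranks and of the norm contribution of the generic part (which is $<1$ because $\dim\mathcal K\le k$ is finite) is correct. The one loose end you yourself flag — that the quarter-turn piece of $x$, and hence the exponent of your first geodesic, a priori lives only in $\mathcal A''$ for a general $C^*$-algebra — is genuine and is precisely the point that the paper delegates to the citation of \cite{andru1}; your observation that compactness of $V$ (via the ``isolated eigenvalue'' clause of Theorem~\ref{cl}(2)) is what keeps the \emph{second} geodesic inside $\mathcal A$ matches the paper's mechanism, but the existence of the \emph{first} geodesic inside $\mathcal A$ still needs the external input. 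So: part one of your proof is a correct alternative argument, more elementary than the paper's; part two mirrors the paper's use of Theorem~\ref{cl}(2) but trades the citation for an explicit construction, which for a general $C^*$-algebra is not fully closed in your write-up (and you say so). For the finite dimensional and von Neumann cases, which you single out, your argument is complete.
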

\begin{proof}
If $P_0$ has finite rank or co-rank, the same goes for any $P\in Gr(P_0)$. That there exist a geodesic in this case is known, see for instance \cite[Section 3]{andru1}. Now if $V$ is described as in (\ref{VX}), in both cases we have $\lambda$ and $\lambda^*$ compact operators (one of them has a finite dimensional domain, the other a finite dimensional co-domain). Thus $v$ (equivalently, $V$)  is a compact operator and $\pm i \frac{\pi}{2}$ are isolated eigenvalues, and the previous theorem ends the proof.
\end{proof}

\subsection{Conjugate points}\label{scpoints}

In this section we characterize the differential of the exponential map and its invariant subspaces, and we present all possible candidates for tangent conjugate points to $P\in Gr(P_0)$ along a geodesic $\gamma(t)=e^{tv}Pe^{-tv}$.

\begin{rem}[Differential of the exponential map of the connection]\label{difexp}
Since the exponential map of the reductive connection is $Exp_P(V)=e^{[V,P]}Pe^{-[V,P]}$, one can compute its differential explicitly, or use the Theorem  \ref{JF} and the well-known fact that if $\mu$ is the unique Jacobi field along $\gamma(t)=Exp_P(tV)$ with $\mu(0)=0$ and $D_t\mu(0)=\dot{\mu}_0=Y$, then we have
$$
D(Exp_P)_V(Y)=\mu(1)=e^v[\sinhc (\ad v)y,P]e^{-v}
$$
where as before, $V=[v,P]$ and $Y=[y,P]$ with $v,y\in \widetilde{\mathcal C}_P=\A_{sk}\cap \C_P$. This formula was obtained for any symmetric Banach space in \cite[Lemma 3.10]{neeb}.
\end{rem}

\begin{rem}[Complexification and the spectrum]\label{complexifi} Since we are dealing with real Banach spaces such as $\mathcal A_{sk}$, it will be convenient to clarify the notions of spectrum (and the notation) we will be using. We recall that if $X$ is a real Banach space and $T\in \mathcal B(X)$ a real linear bounded operator, then with the standard complexification $X'=X\oplus i X$ and $T'=T\oplus T\in\mathcal B(X')$ it is well-known that
$$
\sigma(T')\cap\mathbb R=\sigma_{\mathbb R}(T)\; \left(\;=\{s\in\mathbb R: T-s1\; \textrm{ is not invertible}\}\;\right). 
$$
We further note that $\mathcal A=\mathcal A_{sk}\oplus i\mathcal A_{sk}$ is a standard complexification of $\mathcal A_{sk}$. Hence if $T'\in \mathcal B(\mathcal A)$ has $\mathcal A_{sk}$ as an invariant subspace, we can conclude that 
$$
\sigma(T'|_{\mathcal A_{sk}})=\sigma(T')\cap \mathbb R.
$$
In particular $T'$ is invertible in $\mathcal A$ if and only if it is invertible restricted to $\mathcal A_{sk}$. The same remark holds if we replace $\mathcal A_{sk}$ with the self-adjoint operators.
\end{rem}

\begin{rem}[Factorization of the differential of the exponential map] \label{facto}
For $v\in\widetilde{\mathcal C}_P$, the operator $\ad v$ does not preserve that space. However the operator $\ad^2 v$ does. Thus if we pair the roots of the entire function $\sinhc$ and their opposites, the Weierstrass factorization of that function allows us to write 
\begin{equation}\label{wf}
\sinhc(z)=\prod_{k\ne 0} \left( 1+ \frac{z}{i k\pi }\right)= \prod_{k\ge 1} \left( 1+ \frac{z^2}{k^2\pi^2 }\right)\qquad z\in\mathbb C
\end{equation}
By means of the holomorphic functional calculus we obtain
$$
\sinhc(t\ad v)=\prod_{k\ne 0} \left( 1+ \frac{t\ad v}{i k\pi }\right)= \prod_{k\ge 1} \left( 1+ \frac{t^2\ad^2 v}{k^2\pi^2 }\right),
$$
and in the last expression we have as building blocks linear operators from $\widetilde{C}_P$ into itself. 
\end{rem}

\begin{rem}[The spectrum and eigenvalues of elementary operators]\label{speo}
Note that if $0\ne s\in \sigma(\ad^2v)$ is an eivenvalue in $\mathcal A$, then it is also an eigenvalue in $\mathcal C_P$: because if $0\ne x\in \mathcal D_P$ is such that $\ad^2v(x)=\lambda x$, then applying $\ad v$ we see that 
$$
\ad^2v [v,x]=s [v,x]
$$
and since $[v,x]\in \mathcal C_P$ and it is nonzero (because $sx \ne 0$) we see that $s$ is an eigenvalue of $\ad^2v|_{\mathcal C_P}$.  However, eigenvectors are not our only source of problems. In the infinite dimensional setting, it is possible for a bounded linear operator to be injective but not bounded from below, or injective but not surjective (the range can be a proper subspace or a dense subspace). It was shown by Lumer and Rosenblum \cite[Theorem 10]{rosen} that when $\mathcal A=\mathcal B(\mathcal H)$, we have $\sigma(\ad v)=\{s-t: s,t\in\sigma(v)\}$ if we consider $\ad v$ as an operator on $\mathcal A$. This equality of sets for the spectrum was later extended to elementary operators in von Neumann factors and also for prime and primitive $C^*$-algebras by Mathieu in \cite[Section 4]{mathieu} and \cite[Theorem 3.9]{mathieu2}; see also \cite{abrams}. In \cite[Theorem 5]{rosen} it is also shown that for any complex Banach algebra the inclusion $\subset$ holds, while the equality might fail. 
\end{rem}

\begin{rem}[Monoconjugate and epiconjugate points] We will identify those points $V_0$ where our operator $D(\Exp_P)_{V_0}$ is not invertible. These points will be called \textit{tangent conjugate points} to $P$. If the operator is not injective, it is customary to call the point \textit{monoconjugate}, and the (real) dimension of its kernel is the \textit{order} of the conjugate point. If the operator is not surjective the point is called \textit{epiconjugate}. This phenomena on conjugate points was first observed in the Riemann-Hilbert setting by Grosmann \cite{gros} and McAlpin \cite{mcalpin}. 
\end{rem}


Let $P\in Gr(P_0)$, let $V=[v,P]\in T_P Gr(P_0)$ with $v^*=-v\in \C_P$ and $\|V\|=1$. Let $\gamma(t)=Exp_P(tV)=e^{tv}Pe^{-tv}$. 
\begin{lem}\label{fcp} The candidates to tangent conjugate points are at located at $TV$ with
$$
T(k,s,s')=\frac{k\pi}{|s-s'|},\qquad k\in\mathbb Z^*=\mathbb Z\setminus\{ 0\}, \qquad s\ne s'\in \sigma(V)\subset [-1,1].
$$ 
\end{lem}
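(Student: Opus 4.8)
The plan is to turn the non-invertibility of $D(\Exp_P)_{TV}$ into the statement that $0$ lies in the spectrum of $\sinhc(T\ad v)$, and then to extract the possible values of $T$ from the spectral mapping theorem together with the Lumer--Rosenblum bound on $\sigma(\ad v)$. First I would invoke Remark \ref{difexp}: writing $Y=[y,P]$ with $y\in\widetilde{\C_P}$, and using that $TV=[Tv,P]$ with $\ad(Tv)=T\ad v$,
$$
D(\Exp_P)_{TV}(Y)=e^{Tv}\,[\,\sinhc(T\ad v)\,y\,,\,P\,]\,e^{-Tv}.
$$
Since $Q\mapsto e^{Tv}Qe^{-Tv}$ is a linear isometry of $\A$ and $y\mapsto[y,P]$ is a linear isomorphism of $\widetilde{\C_P}$ onto $T_PGr(P_0)$ (Lemma \ref{campos}), the differential $D(\Exp_P)_{TV}$ is invertible iff $\sinhc(T\ad v)$ is invertible as an operator on $\widetilde{\C_P}$. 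Because $\sinhc$ is even it is an entire function of $\ad^2v$, and by the reductive grading (Remarks \ref{csub} and \ref{facto}) $\ad^2v$, hence $\sinhc(T\ad v)$, leaves $\widetilde{\C_P}$ invariant; by Remark \ref{complexifi} its invertibility on $\widetilde{\C_P}$ is equivalent to invertibility on the complexification $\C_P$.

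Next I would use $\A=\D_P\oplus\C_P$: both summands are closed, complemented and invariant under $\sinhc(T\ad v)$ (again by evenness and the grading), so if the operator fails to be invertible on $\C_P$ it fails to be invertible on $\A$, i.e. $0\in\sigma(\sinhc(T\ad v))$ with the spectrum taken in $\mathcal B(\A)$. The spectral mapping theorem for the holomorphic functional calculus (applicable since $\sinhc$ is entire) then produces $\mu\in\sigma(\ad v)$ with $\sinhc(T\mu)=0$. The zero set of $\sinhc$ is exactly $\{ik\pi:k\in\mathbb Z^*\}$, so $T\mu=ik\pi$ for some $k\in\mathbb Z^*$; in particular $\mu\neq 0$.

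Finally I would locate $\mu$ using Remark \ref{speo}: in any complex Banach algebra one has $\sigma(\ad v)\subseteq\{s_1-s_2:s_1,s_2\in\sigma(v)\}$. Since $v^*=-v$ we have $\sigma(v)\subseteq i\mathbb R$, and since $v^2=-V^2$ (Remark \ref{blm}) with $V^*=V$ and both spectra balanced (Lemma \ref{campos}), we get $\sigma(v)=i\,\sigma(V)$. Hence $\mu=i(s-s')$ with $s,s'\in\sigma(V)\subseteq[-1,1]$, and $s\neq s'$ because $\mu\neq 0$; substituting into $T\mu=ik\pi$ yields $T(s-s')=k\pi$, that is $T=T(k,s,s')=\frac{k\pi}{|s-s'|}$ after absorbing the sign into $k$. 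The one thing requiring care is the chain of reductions relating $\sinhc(T\ad v)$ restricted to $\widetilde{\C_P}$, to its complexification $\C_P$, and to the whole algebra $\A$, so that complex spectral theory and the Lumer--Rosenblum inclusion become available; once that is set up there is no genuine obstacle, and since only the inclusion $\sigma(\ad v)\subseteq\sigma(v)-\sigma(v)$ is used, this list of candidates holds with no primeness or factoriality hypothesis on $\A$ (equality of these sets, needed later to decide which candidates are genuinely conjugate, is another matter).
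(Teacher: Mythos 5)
Your proof is correct and follows essentially the same route as the paper: both reduce via Remark \ref{difexp} to the invertibility of $\sinhc(T\ad v)$ on $\widetilde{\C_P}$, invoke the Lumer--Rosenblum inclusion $\sigma(\ad v)\subseteq\{s_1-s_2:s_1,s_2\in\sigma(v)\}$, and use that the zero set of $\sinhc$ is $\{ik\pi:k\in\mathbb Z^*\}$. The one presentational difference is that the paper reads off the candidate times from the Weierstrass factorization of $\sinhc(T\ad v)$ into building blocks $1+\tfrac{T^2\ad^2 v}{k^2\pi^2}$ (Remark \ref{facto}), machinery that is re-used later in Remark \ref{bloque}, Proposition \ref{bloques} and Theorem \ref{mononcon}, whereas you apply the spectral mapping theorem for the holomorphic functional calculus to the entire function $\sinhc$ directly; your version makes fully explicit the step that $\sinhc(T\ad v)$ is non-invertible precisely when $T\mu$ hits a zero of $\sinhc$ for some $\mu\in\sigma(\ad v)$, and is also more careful about the chain of reductions from $\widetilde{\C_P}$ to its complexification $\C_P$ and then to $\mathcal B(\A)$ that makes complex spectral theory applicable.
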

\begin{proof}
Since $x\mapsto [x,P]$ is an isomorphism from $\widetilde{\mathcal C}_P$ onto $T_PGr(P_0)$, by Remark \ref{difexp} it suffices to study the operator $x\mapsto \sinhc(t\ad v)x$. If we look at the final factorization of the previous remark, it is apparent that each building block $1+ \frac{t^2\ad^2 v}{k^2\pi^2}$ will fail to be invertible at $t$ such that
$$
-k^2\pi^2 \in t^2 \sigma(\ad^2v)=t^2\sigma(\ad v)^2
$$ 
for each $k\ne 0$. If we consider $x\mapsto \ad v (x)=[v,x]$, but as an operator from $\mathcal A$ into itself, we have that  $\sigma(\ad v)\subset\{s-s':s,s'\in \sigma(v)\}$, by \cite[Theorem 10]{rosen}. Hence $\sigma(\ad^2 v)\subset\{-|s-s'|^2: s,s'\in \sigma(V)\} \subset [-4,0]$ since $\sigma(V)=i\sigma(v)\subset i[-1,1]$. If $s=s'$ it must be $k=0$ which is excluded. Hence it must be $-k^2\pi^2 =-t^2 |s-s'|^2$ for some $s\ne s'$ in $\sigma(V)$, and this proves the lemma.
\end{proof}

\begin{rem}[Building blocks]\label{bloque} For each $T=T(k,s,s')$ the point $Q=Exp_P(TV)$ is conjugate to $P$ when $\sinhc(T\ad v)$ is not invertible, and this happens if and only if any of the operators
$$
x\mapsto (\ad^2 v+\frac{j^2}{k^2}|s-s'|^2 1)x,\quad j\in\mathbb Z^*
$$
is not invertible in $\mathcal C_P$. Equivalently, naming $\mu_j=\frac{|j|}{|k|}|s-s'|>0$, when any of the operators $\ad^2 v+\mu_j^21$ is not invertible. This will happen if and only if there exists $s_1\ne s_2\in\sigma(V)$ such that $|s_1-s_2|=\mu_j$.
\end{rem}

\begin{rem}Consider the case of the projective real line, presented as the orthogonal group-orbit of a one-dimensional projection. Since the tangent space is one-dimensional, we have  $\ad^2v\equiv 0$ there, hence
$$
D(Exp_P)_V(Y)=\mu(1)=e^v[\sinhc (\ad v)y,P]e^{-v}=e^v[y,P]e^{-v}=Y,
$$
the differential of the exponential map is always the identity map, and \textit{there are no conjugate points at all}. For finite dimensional Grassmannians however, it is known (and we will show below) that this is the only case with no conjugate points.
\end{rem}

\begin{rem}[First tangent conjugate point]\label{ftcp}
When  $\|V\|=1$ we know that $1$ or $-1$ belongs to $\sigma(V)$, but by Lemma \ref{campos}(2) we have both $-1,1$ belong to $\sigma(V)$. Hence the first (candidate to) tangent conjugate point occurs at $t_0=\pm\frac{\pi}{2}$, since it corresponds to $k=\pm1$, $s=1, s'=-1$. In this case note that the corresponding operator of Remark \ref{bloque} will fail to be invertible if there exist $s_1,s_2\in \sigma (V)$ such that $|j|=|k||s_1-s_2|/2$; but since $|s_1-s_2|\le 2$ then it must be $|j|\le |k|=1$. Hence the only operator of interest in this case is 
$$
 x\mapsto (4+\ad^2 v)x\qquad  x\in \widetilde{\mathcal C}_P
$$
which corresponds to $\mu=2$. We will show that this operator is never invertible (in any $C^*$-algebra) hence $Q=\gamma(\frac{\pi}{2})$ is always the first conjugate point along $\gamma$.
\end{rem}

\begin{rem}[Block operators]\label{bloo} We represent $P$ as a block matrix and likewise generic tangent vectors at $P$ by $V=\lambda +\lambda^*$, $X=\rchi+\rchi^*$ as in equation (\ref{VX}) above, with $\|V\|=1$ (note that $\lambda=PV(1-P)=PV$ and likewise $\rchi=PX(1-P)=PX$). As before, we let $x=[X,P]$ and $v=[V,P]$. The operators of Remark \ref{bloque} have the form
$$
x\mapsto [v,[v,x]]+\mu^2 x=v^2x+xv^2-2vxv+\mu^2 x
$$
for some $\mu>0$, i.e. $\mu=|j||k|^{-1}|s-s'|$ for $s\ne s'\in \sigma(V)$ and $j,k\in\mathbb Z^*$. Moreover since we are only interested in the case when this operator is not invertible, it must be 
$$
\mu^2\in\sigma(-\ad^2 v)\subset\{|s_1-s_2|^2:s_1\ne s_2\in\sigma(V)\}.
$$
In particular we know then that $0<\mu\le 2$. The operator can then be rewritten in terms of block-operators as
\begin{equation}\label{opermu}
\rchi\mapsto |\lambda^*|^2\rchi+\rchi|\lambda|^2-2\lambda\rchi^*\lambda-\mu^2\rchi.
\end{equation}
\end{rem}

\begin{rem}[Polar decomposition of the speed]\label{polard} We assume $\mathcal A$ embedded in a concrete operator algebra $\mathcal B(\mathcal H)$. Since $\mathrm{rank}(|\lambda|)=\mathrm{rank}(|\lambda^*|)$, we can consider the partial isometry $\Omega: \ran(1-P)\to \ran(P)$ given by $\Omega|\lambda|\xi=\lambda\xi$ on $\ran(|\lambda|)$ (and its closure), and extended by zero on $\ran(|\lambda|)^{\perp}\subset\ran 1-P$. That $\Omega$ is well-defined, bounded and a partial isometry can be proved in the same fashion as in the case of the standard polar decomposition for an operator on  a fixed Hilbert space. Apparently, we have $\lambda=\Omega|\lambda|$ and $\Omega^*\Omega=P_{|\lambda|}$, where the latter is the projection onto the closure of the range of $|\lambda|$.  Moreover $\Omega\Omega^*=P_{|\lambda^*|}$ and $\Omega |\lambda|\Omega^*=|\lambda^*|$. 

\smallskip

Let $V=U|V|$ be the polar decomposition of $V$ in the enveloping von Neumann algebra of $\mathcal A$, then $|V|=|v|$ and $\Omega=PU(1-P)$. In particular $v=u|v|=u|V|$ with
\begin{equation}\label{uU}
U=\left(\begin{array}{cc} 0 & \Omega  \\ \Omega^* & 0 \end{array}\right) \quad \textrm{ and } \quad u=\left(\begin{array}{cc} 0 & \Omega  \\ -\Omega^* & 0 \end{array}\right).
\end{equation}
Note that  $\Omega\in \mathcal A$ if and only if $U\in\mathcal A$. This happens in particular if $V$ is a \textit{regular} element of $\mathcal A$ (i.e. $0$ is isolated in $\sigma(V)$), see \cite[Theorem 2.6]{arias}.
\end{rem}

\begin{defi}\label{plam} We will use ${\mathfrak{Re}},\mathfrak{Im}:\mathcal A\to\mathcal A$ to denote the real linear operators that take the symmetric and skew symmetric part of an operator in $\mathcal A$, i.e $\mathfrak{Re}(x)=(x+x^*)/2$ and $\mathfrak{Im}(x)=(x-x^*)/2$. For fiven $V\in \mathcal A$, we will decompose $\mathcal A_{sk}$ in a direct sum of three subspaces:
\begin{equation}\label{ds}
\mathcal A_{sk}=P_v\mathcal A_{sk} P_v\bigoplus\; \widetilde{\mathcal C}_{P_v}\; \bigoplus (1-P_v)\mathcal A_{sk}(1-P_v),
\end{equation}
where
$$
\widetilde{\mathcal C}_{P_v}= P_v\mathcal A_{sk}(1-P_v)\oplus (1-P_v)\mathcal A_{sk}P_v=P_v\mathcal A_{sk}\oplus \mathcal A_{sk}P_v.
$$
Denoting $P_{|\lambda^*|}\le P$ the range projection of $\lambda\lambda^*$ and $P_{|\lambda|}$  the range projection of $\lambda^*\lambda$, we have we 
$$
P_V=P_{|V|}=P_v=P_{|\lambda^*|}+P_{|\lambda|}.
$$
\end{defi}

\begin{prop}\label{bloques} Let $V=[v,P]\in T_PGr(P_0)$. Then each of the three subspaces in (\ref{ds}) are invariant for $\sinhc(T\ad V)$. If $V$ has unit norm,  $v=u|v|$ is the polar decomposition of $v$ and $T=T(k,s,s')$ as in Lemma \ref{fcp}, then with respect to this direct sum we have 
$$
\sinhc(T\ad v)=L_u(\Pi_-\oplus \Pi_+)L_{u^*} \bigoplus (L_{\sinhc(Tv)}+R_{\sinhc(Tv)}-1)\bigoplus 1
$$
where  $\Pi_-,\Pi_+\in\mathcal B(\mathcal C_P)$ are given by 
$$
\Pi_-=\Pi_{j\in\mathbb N}\big(1-\frac{1}{\mu_j^2}(L_{|v|}-R_{|v|})^2\big)\mathfrak{Re}
\qquad \Pi_+=\Pi_{j\in\mathbb N}\big(1-\frac{1}{\mu_j^2}(L_{|v|}+R_{|v|})^2\big)\mathfrak{Im}
$$
for $\mu_j=j|k|^{-1}|s-s'|$, and they preserve self-adjoint (resp. skew-adjoint) operators. 
\end{prop}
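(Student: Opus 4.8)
The plan is to analyze $\sinhc(T\ad v)$ on each of the three summands of (\ref{ds}) separately, passing — as in Remark \ref{polard} — to the enveloping von Neumann algebra so that $P_v$, $|v|$ and the partial isometry $u$ of the polar decomposition $v=u|v|$ are available. The outer direct sum decomposition is then immediate: since $|v|$ (hence $P_v$) is the support projection of the normal element $v$ we have $v=P_vvP_v$, so $\ad v$ — and therefore any holomorphic function of $\ad v$, in particular $\sinhc(T\ad v)$ — leaves $P_v\mathcal A P_v$, $\widetilde{\mathcal C}_{P_v}$ and $(1-P_v)\mathcal A(1-P_v)$ invariant, while $\ad v$ also commutes with the adjoint, so the three pieces of $\mathcal A_{sk}$ in (\ref{ds}) are invariant. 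On $(1-P_v)\mathcal A_{sk}(1-P_v)$ one has $v(1-P_v)=(1-P_v)v=0$, hence $\ad v=0$ and $\sinhc(T\ad v)=1$ there. On $\widetilde{\mathcal C}_{P_v}$, $\ad v$ restricts to $L_v$ on $P_v\mathcal A(1-P_v)$ and to $-R_v$ on $(1-P_v)\mathcal A P_v$, so (since $\sinhc$ is even) $\sinhc(T\ad v)$ restricts to $L_w$ and to $R_w$ with $w=\sinhc(Tv)$; because $w=P_vwP_v+(1-P_v)$ a one-line check gives that on all of $\widetilde{\mathcal C}_{P_v}$ this equals $L_w+R_w-1$, the middle summand.

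The substance lies in the summand $M_{sk}:=P_v\mathcal A_{sk}P_v$. Here I would use that $v$ is normal, so $u$ and $|v|$ commute, $u^{*}=-u$ and $u^{*}u=uu^{*}=P_v$; thus inside $M:=P_v\mathcal A P_v$ the element $u$ is a skew-adjoint unitary with $u^{2}=-P_v$, i.e.\ a complex structure. Introduce the adjoint-preserving involution $\Phi(x)=-uxu$ on $M$; then $M=M^{+}\oplus M^{-}$ with $M^{\pm}=\{x\in M:\ ux=\pm xu\}$, and the associated projections $x\mapsto\tfrac12(x\mp uxu)$ also split $M_{sk}$. A direct computation, using $v^{2}=-|v|^{2}$, $v=u|v|=|v|u$ and these commutation relations, yields
\begin{equation*}
\ad^{2}v\big|_{M^{+}}=-(L_{|v|}-R_{|v|})^{2},\qquad \ad^{2}v\big|_{M^{-}}=-(L_{|v|}+R_{|v|})^{2}.
\end{equation*}
Plugging this into the Weierstrass factorization $\sinhc(T\ad v)=\prod_{j\ge1}\big(1+\tfrac{T^{2}}{j^{2}\pi^{2}}\ad^{2}v\big)$ of Remark \ref{facto}, and using $\tfrac{T^{2}}{j^{2}\pi^{2}}=\tfrac1{\mu_j^{2}}$ with $T=T(k,s,s')$ and $\mu_j=j|k|^{-1}|s-s'|$, we get that $\sinhc(T\ad v)$ acts on $M^{+}$ as $A_-:=\prod_j\big(1-\tfrac1{\mu_j^{2}}(L_{|v|}-R_{|v|})^{2}\big)$ and on $M^{-}$ as $A_+:=\prod_j\big(1-\tfrac1{\mu_j^{2}}(L_{|v|}+R_{|v|})^{2}\big)$, both products converging in norm, being holomorphic functional calculus images of $\sinc$.

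It remains to match the $M^{\pm}$ splitting with the form in the statement, which is where the conjugation by $L_u$ enters. Given $x\in M_{sk}$, write $x=x^{+}+x^{-}$ with $x^{\pm}\in M^{\pm}$ (both skew-adjoint); a short computation shows $u^{*}x^{+}$ is self-adjoint and $u^{*}x^{-}$ is skew-adjoint, so $\mathfrak{Re}(u^{*}x)=u^{*}x^{+}$ and $\mathfrak{Im}(u^{*}x)=u^{*}x^{-}$. Since $u$ commutes with $|v|$, both $L_u$ and $L_{u^{*}}$ commute with $A_{\pm}$, and $L_uL_{u^{*}}=L_{P_v}=\mathrm{id}$ on $M$; therefore
\begin{align*}
L_u(\Pi_{-}\oplus\Pi_{+})L_{u^{*}}(x)&=u\big[A_-(\mathfrak{Re}(u^{*}x))+A_+(\mathfrak{Im}(u^{*}x))\big]\\
&=A_-(uu^{*}x^{+})+A_+(uu^{*}x^{-})=A_-(x^{+})+A_+(x^{-})=\sinhc(T\ad v)(x),
\end{align*}
which is the asserted identity on the first summand; the claim that $\Pi_{\pm}$ preserve self-adjoint resp.\ skew-adjoint elements then follows from the presence of $\mathfrak{Re},\mathfrak{Im}$ and the reality of the coefficients of $A_{\pm}$. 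The main obstacle I anticipate is the bookkeeping in the computation of $\ad^{2}v$ on $M^{\pm}$ — which rests squarely on the normality of $v$, so that $u$ commutes with $|v|$ — together with the observation that conjugation by $L_u$ is exactly what converts the $*$-decomposition $M=M_{h}\oplus M_{sk}$ (the $\mathfrak{Re}/\mathfrak{Im}$ of the statement) into the $u$-commutant decomposition $M=M^{+}\oplus M^{-}$, on which $\ad^{2}v$ and hence $\sinhc(T\ad v)$ are block-diagonal. I would also note that this argument is uniform over the real and complex cases (the involution $\Phi$ replaces any appeal to eigenvalues of $u$), and that since $u,|v|,P_v$ live a priori only in the enveloping von Neumann algebra, the identity is to be read there, even though $\sinhc(T\ad v)$ itself preserves $\mathcal A$.
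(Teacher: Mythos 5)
Your proof is correct and, at bottom, rests on the same algebraic identity as the paper's: the observation that $vxv=\mp|v|x|v|$ depending on whether $x$ commutes or anticommutes with $u$, which is what splits $\ad^2v$ into $-(L_{|v|}-R_{|v|})^2$ and $-(L_{|v|}+R_{|v|})^2$. The genuine reorganization is that you first introduce the involution $\Phi(x)=-uxu$ on $M=P_v\mathcal A P_v$ and the eigenspace decomposition $M=M^+\oplus M^-$, prove that $\ad^2v$ is block-diagonal there, and only afterwards identify $M^{\pm}$ with the self-/skew-adjoint parts of $M$ by left multiplication with $u^*$. The paper instead applies $L_{u^*}$ first, splits $u^*x=a+b$ into self- and skew-adjoint parts directly, and reads off the two factors from $L_{u^*}(1+\ad^2v)x$. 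The underlying computation is the same, and your treatment of the middle block (via $L_v\oplus(-R_v)$ and evenness of $\sinhc$) and the trivial block agrees with the paper's; what your packaging buys is a cleaner explanation of where the $\mathfrak{Re}/\mathfrak{Im}$ decomposition comes from (conjugation by $L_u$ of the $\Phi$-grading), and it makes the invariance of the three summands under $\sinhc(T\ad v)$ an immediate consequence of $v=P_vvP_v$ rather than something read off from the final formulas. One small point worth making explicit: that $\Pi_{\pm}$ land in self-/skew-adjoint elements follows because $(L_{|v|}\mp R_{|v|})^2$ itself preserves the $*$-grading (since $|v|=|v|^*$), not merely from the reality of coefficients.
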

\begin{proof}
Using that $uv=vu=-|v|$ we have that 
$$
L_{u^*}(1+ \ad^2 v)x=u^*x-|v|^2u^*x-u^*x+2|v| (u^*x)^*|v|
$$
where in the last term we used that $(u^*x)^*=x^*u=-xu$ since $x^*=-x$.  If we split $u^*x=a+b$ in its self-adjoint part $a$ and its skew adjoint part $b$, we obtain
$$
a-|v|^2a-a|v|^2+2|v|a|v|   +  b-|v|^2b-b|v|^2-2|v|b|v|,
$$
where the sum of the terms with $a$ is self-adjoint while the sum of the terms with $b$ is skew-adjoint. Then we have shown that
$$
L_{u^*}(1+ \ad^2 v)=(1- (L_{|v|}-R_{|v|})^2)\mathfrak{Re}\,L_u^*+ (1- (L_{|v|}+R_{|v|})^2)\mathfrak{Im}\,L_u^*.
$$
Note that both $L_{|v|}\pm R_{|v|}$ preserve the space of self-adjoint and the space of skew-adjoint elements of $\mathcal A$. Then if we multiply by $(1+\frac{1}{\mu_2^2}\ad^2v)$ on the right, and apply the identity above but for $\mu_2$ instead of $\mu_1$, we have
\begin{align*}
L_{u^*}\left(1+ \frac{\ad^2 v}{\mu_1^2}\right)\left(1+ \frac{\ad^2 v}{\mu_2^2}\right) =& \left(1- \frac{(L_{|v|}-R_{|v|})^2)}{\mu_1^2}\right) \left(1- \frac{(L_{|v|}-R_{|v|})^2)}{\mu_2^2}\right)\mathfrak{Re}\, L_{u^*}
\\
 & + \left(1- \frac{(L_{|v|}+R_{|v|})^2}{\mu_1^2}\right)\left(1- \frac{(L_{|v|}+R_{|v|})^2}{\mu_2^2}\right)\mathfrak{Im}\, L_{u^*}.
\end{align*}
Let 
$$
S_j=(1-\frac{1}{\mu_j^2}\left(L_{|v|}-R_{|v|}\right)^2)\qquad\qquad T_j=(1-\frac{1}{\mu_j^2}(L_{|v|}+R_{|v|})^2),
$$
then $\Pi_-=\Pi_{j\in\mathbb N} S_j\mathfrak{Re}$ and $\Pi_+=\prod_{j\in\mathbb N}T_j\mathfrak{Im}$. Iterating the argument above for the products of $1+\frac{\ad^2 v}{\mu_j^2}$, it is plain by Remark \ref{facto} and Lemma \ref{fcp} that 
$$
L_{u^*}\sinhc( T\ad v)=\Pi_- L_{u^*}+\Pi_+L_{u^*} \textrm{ in } \mathcal {\mathcal C}_P.
$$
Since $L_{P_v}=L_u^*L_u$ we obtain 
$$
L_{P_v}\sinhc( T\ad v)=L_u(\Pi_-\oplus \Pi_+)L_{u^*} \textrm{ in } \mathcal {\mathcal C}_P.
$$
On the other hand, a straightforward computation shows that 
$$
(1-P_v)(1+\frac{\ad^2 v}{\mu_j^2})x=(1-P_v)x(1+\frac{v^2}{\mu_j^2}),
$$
hence $(1-L_{P_v})\sinhc(T\ad v)=(1-L_{P_v})R_{\sinhc(Tv)}$. Hence
\begin{align}
\sinhc(T\ad v)x & =\sinhc(T\ad v)L_{P_v}x+\sinhc(T\ad v)(1-L_{P_v})x \nonumber \\
\sinhc(T\ad v)x &=u(\Pi_-\oplus\Pi_+)(u^*x)+(1-P_v)x\sinhc(Tv).\label{parc}
\end{align}
Now let $x=P_vxP_v$, then the second term in (\ref{parc}) vanishes, since $(1-P_v)x=0$. For the first term we have
\begin{align*}
u\Pi_{\pm}(u^*x) & =u\Pi_{\pm}(u^*xP_v)=u\Pi_{\pm}(R_{P_v}(u^*x))=uR_{P_v}\Pi_{\pm}(u^*x)\\
&=u\Pi_{\pm}(u^*x) P_v\in P_v\mathcal A P_v
\end{align*}
since $u=P_vu$. This settles the assertion for the first invariant subspace. 

Now if $x=(1-P_v)x(1-P_v)$, it is plain that $u^*x=0$ therefore the first term vanishes.  It is easy to see that  $(1-P_v)\sinhc(Tv)=1-P_v$, thus the second term equals $(1-P_v)x(1-P_v)$ i.e., the space is invariant and $\sinhc(\ad T v)$ is the identity operator there. 

Lastly, let $x=xP_v+P_vx$ i.e. $x^*=-x$ is $P_v$-codiagonal. Then $P_vxP_v=0$ and $vxv=0$, hence $\ad^{2k}v x=v^{2k}x+xv^{2k}$ for $k\ge 1$ (in particular the operator $\sinhc(T\ad v)$ preserves $P_v\mathcal A_{sk}\oplus \mathcal A_{sk}P_v$). A straightforward computation throws 
\[
\sinhc(T\ad v)x  =\sinhc(Tv) x+x\sinhc(Tv)-x. 
\]
\end{proof}

\subsection{The kernel of $D\Exp_P$} In this section we examine the nontrivial componentes of the kernel, we begin with the codiagonal part.

\begin{rem}[$P_V$-codiagonal tangent locus]\label{pvc} Solutions in the space $\widetilde{\mathcal C}_{P_V}$:
\begin{enumerate}
\item[i)]  This space in the above decomposition will be trivial when $P_V=1$, and this is equivalent to $P_{|\lambda^*|}=P$, $P_{|\lambda|}=1-P$ (notation as in Definition \ref{plam}). Since $P_1=\Omega P_2 \Omega^*$ are conjugated, this is only possible when $P,1-P$ have the same rank. 
\item[ii)]Let  $S=\sinhc(Tv)$. Then  $x\in \widetilde{\mathcal C}_{P_V}$ belongs to the kernel of $\sinhc(T\ad v)$ if and only if 
\begin{equation}\label{sxpv}
Sx=xP_v.
\end{equation}
It is plain that such $x$ is in the kernel. Here is why any solution satisfies (\ref{sxpv}):  by the previous proposition we have  $Sx+xS=x$, then $P_vSx+P_vxS=P_vx$. Now $P_v$ commutes with $S$ and moreover $P_vS=P_v(1+v+\dots)$ therefore $P_vSx=P_vs$ since $P_vxP_v=0$. Hence $P_vS x =0$ and 
\begin{align*}
Sx & =S(1-P_v)x+SP_vx=s(1-P_v)x=(1+v+\dots)(1-P_v)x\\
&=(1-P_v)x=x-P_vx=xP_v.
\end{align*} 
\item[iii)] The equation (\ref{sxpv}) has nontrivial solution if and only if there  exist $j\in \mathbb N$ and $0\ne s_0\in \sigma(|v|)$ such that  $Ts_0=j\pi$, or equivalently
\begin{equation}\label{Tj}
j\, |s-s'|=|k| s_0.
\end{equation}
Here we explain why and characterize the solutions: we have $S=P_{\ker v}+S'$ for some $S'$ with $|S'|=|S-P_{\ker v}|\le 1-\varepsilon<1$.  The operator $S'$ is obtained by means of $\sinhc$ applied to $TV$ with the eigenprojections of $TV$ corresponding to $j\pi$ (which are mapped to $0$) removed. Note that $P_{\ker v}=1-P_v$, then from $Sx=xP_v$ we have $(1-P_v)x+ S' x=xP_v$, which is equivalent to $S'x=0$. Thus the solutions $x$ are characterized by the equation $P_0x=0$, where $P_0$ is the spectral projection of $Tv$ corresponding to the points of the spectrum of $Tv$ that are different of $j\pi$, $j\ge 1$. In particular if $\sigma(TV)\subset \pi\mathbb Z$ then $S'=0$ and the solution is the whole space $\widetilde{\mathcal C}_{P_V}$.
\item[iv)] For the first conjugate point, the condition (\ref{Tj}) is impossible since in that case $k=1$ and $|s-s'|=2$ (Remark \ref{ftcp}). So it must be  $2|j|=s_0\le 1$ which is not possible. Then we conclude that \textit{there are no $P_V$-codiagonal solutions} for the first conjugate locus, and \textit{all solutions are in} $P_v\mathcal A P_v$.
\end{enumerate}
\end{rem}

We now proceed to study $\sinhc(T\ad v)$ in the final block  $P_v\mathcal AP_v$. In this space, $u$ acts like a unitary operator, to be precise:

\begin{lem}\label{luu}
The map $L_{u^*}:P_v\widetilde{\mathcal C}_PP_v\longrightarrow \mathcal D_0=\{Z\in \mathcal D_P\cap \mathcal D_{P_v}: Z^*=uZu^*\}$ is an isometric isomorphism of Banach spaces, with inverse $L_u$.  For each $Z$ in the later space we have $PZ=\Omega^*\rchi=ZP$ and $(1-P)Z=\Omega\rchi^*=Z(1-P)$, where $\rchi,\rchi^*$ are the components of $x\in \widetilde{\mathcal C}_P$ as in (\ref{VX}). Moreover for each $Z$ we have
\begin{equation}\label{zi}
ZP=PZ=\Omega^*(1-P)Z\Omega \quad \mathrm{ and }\quad (1-P)Z=\Omega PZ \Omega^*.
\end{equation}
\end{lem}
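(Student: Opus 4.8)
The plan is to reduce everything to one structural observation: the operator $u$ of (\ref{uU}) is a skew-adjoint partial isometry whose initial and final projections both equal $P_v$. Indeed, from the block form in (\ref{uU}) and the relations $\Omega^*\Omega=P_{|\lambda|}$, $\Omega\Omega^*=P_{|\lambda^*|}$ of Remark \ref{polard} one computes $u^*u=uu^*=P_{|\lambda^*|}+P_{|\lambda|}=P_v$, hence $u^*=-u$, $u^2=-P_v$ and $u=P_vu=uP_v=P_vuP_v$. Consequently left multiplication by $u$ and by $u^*$ map $P_v\mathcal A P_v$ to itself and are mutually inverse there ($L_{u^*}L_u=L_uL_{u^*}=L_{P_v}=\mathrm{id}$ on $P_v\mathcal A P_v$), and for $x\in P_v\mathcal AP_v$ one has $uu^*x=P_vx=x$, so $\|u^*x\|\le\|x\|=\|uu^*x\|\le\|u^*x\|$; thus $L_{u^*}$ is already an isometric isomorphism of the Banach space $P_v\mathcal AP_v$, and the only genuine task is to show it carries the closed real subspace $P_v\widetilde{\mathcal C}_PP_v$ onto $\mathcal D_0$.

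For that I would verify the inclusions $L_{u^*}(P_v\widetilde{\mathcal C}_PP_v)\subseteq\mathcal D_0$ and $L_u(\mathcal D_0)\subseteq P_v\widetilde{\mathcal C}_PP_v$ by a handful of one-line computations. Given $x\in P_v\widetilde{\mathcal C}_PP_v$ and $Z=u^*x$: $Z$ is $P$-diagonal because $u$ and $x$ are both $P$-codiagonal, so $Z$ commutes with $\mathfrak s_P$; $Z=P_vZP_v$ (hence $Z\in\mathcal D_{P_v}$) because $u^*=P_vu^*P_v$ and $x=P_vxP_v$; and the twisted adjoint relation holds since $Z^*=x^*u=-xu=xu^*$ while $uZu^*=uu^*xu^*=P_vxu^*=xu^*$, so $Z\in\mathcal D_0$. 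Conversely, if $Z\in\mathcal D_0$ then $Z^*=uZu^*$ together with $u=P_vu=uP_v$ forces $Z=P_vZP_v$, and setting $x=uZ$ one gets $x^*=Z^*u^*=uZu^*u^*=-uZP_v=-uZ=-x$ (using $u^2=-P_v$ and $ZP_v=Z$), while $x$ is $P$-codiagonal (product of the $P$-codiagonal $u$ and the $P$-diagonal $Z$) and visibly $x=P_vxP_v$; finally $u^*x=u^*uZ=P_vZ=Z$, so $L_u$ indeed inverts $L_{u^*}$ between the two subspaces. Injectivity, surjectivity and the isometry property then follow, proving the first assertion; the intertwining $PZ=\Omega^*\rchi=ZP$, $(1-P)Z=\Omega\rchi^*=Z(1-P)$ drops out by substituting the $2\times2$ block forms of $u$ (from (\ref{uU})) and of $x$ (from (\ref{VX})) into $Z=u^*x$ and reading off the diagonal blocks.

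For the remaining identities (\ref{zi}) I would translate the membership condition $Z^*=uZu^*$ of $\mathcal D_0$ into $2\times2$ block form, which expresses $PZ$ and $(1-P)Z$ in terms of each other conjugated by $\Omega,\Omega^*$, and then simplify using $\Omega^*\Omega=P_{|\lambda|}$, $\Omega\Omega^*=P_{|\lambda^*|}$, $P_v=P_{|\lambda^*|}+P_{|\lambda|}$, together with the fact that $x=P_vxP_v$ is equivalent to $\rchi=P_{|\lambda^*|}\,\rchi\,P_{|\lambda|}$, so that $P_{|\lambda|}$ and $P_{|\lambda^*|}$ act as identities wherever they appear and the double conjugation collapses to (\ref{zi}).

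The whole argument is elementary; the one point one must get exactly right is the partial-isometry bookkeeping for $u$ in the first step — that both $u^*u$ and $uu^*$ equal $P_v$, so that $L_u$ behaves as a unitary of the corner $P_v\mathcal AP_v$ and $u^2=-P_v$, $u=P_vuP_v$ — since once these are in hand, membership in $\mathcal D_0$, commutation with $P_v$, skew-adjointness and the twisted adjoint condition are all routine. I do not expect any obstacle beyond keeping the block conventions and the signs straight.
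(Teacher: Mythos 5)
Your argument is correct and follows essentially the same route as the paper: reduce to the partial-isometry facts $u^*=-u$, $u^*u=uu^*=P_v$, $u=P_vuP_v$, verify the two inclusions $L_{u^*}(P_v\widetilde{\mathcal C}_PP_v)\subseteq\mathcal D_0$ and $L_u(\mathcal D_0)\subseteq P_v\widetilde{\mathcal C}_PP_v$ by direct algebra, and read off the remaining block identities from the $2\times2$ decompositions. The only difference is cosmetic — you front-load the partial-isometry bookkeeping and use the grading (codiagonal times codiagonal is diagonal, etc.) in place of explicit block multiplications — but the substance is identical.
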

\begin{proof}
Let $x\in P_v\widetilde{\mathcal C}_PP_v$, let $Z=u^*x$. Then $Z^*=x^*u=xu^*=uu^*xu^*=uZu^*$.  We have $P_vZ^*=P_vuZu^*=uZu^*=Z^*$ and likewise $Z^*P_v=Z^*$, hence $Z\in \mathcal D_{P_v}$. Now $PZ=Pu^*x=P(\Omega \rchi^* +\Omega^*\rchi)=\Omega\rchi^*=u^*x P=ZP$, this proves that $Z\in \mathcal D_P$ and the assertion on $Z$ and the fact that $Z\in \mathcal D_P$. It is plain that $L_u^*$ is isometric since it is a partial isometry and we are restricting it to its support. Now assume that $Z\in \mathcal D_0$, then decomposing $Z=PZ+(1-P)Z$ and using $uZu^*=Z$ immediately shows the validity of (\ref{zi}). Let  $x=uZ$. Then  $Z^*u=uZP_v=uP_vZ=uZ$ hence $x^*=Z^*u^*=uZu^*u^*=-uZ=-x$. Moreover $P_v x=uZ=x$, $xP_v=-x^*P_v=-x^*=x$, thus $x\in P_v\mathcal A_{sk}P_v$. Finally, by writting $Z$ a a diagonal block matrix $Z=\tilde{z}\oplus z$ with respect to $P$, and likewise with $u$ as in (\ref{uU}), we see that
$$
Px=Puz=-\Omega z =uz(1-P)=x(1-P),
$$
thus $x\in \mathcal C_P$. 
\end{proof}

\begin{rem}\label{muu} By Proposition \ref{bloques} and Lemma \ref{luu}, the operator $\sinhc(T\ad v)$ restricted to $P_v\mathcal A P_v$ is equivalent to the operator $\Pi_-\oplus \Pi_+$ acting in $\mathcal D_0$. Let
$$
\mathcal A_0=(1-P)\mathcal D_0 (1-P)\simeq P\mathcal D_0 P=P_{|\lambda*|}\mathcal A P_{|\lambda*|}\simeq P_{|\lambda|}\mathcal A P_{|\lambda|}
$$
the isomorphisms implemented by $\Ad_{\Omega}$. By the same Lemma \ref{luu}, it suffices to study the restriction of this operator $\Pi_-\oplus \Pi_+$ to $\mathcal A_0$. So if we name 
$$
z=\Omega^*\rchi \in \mathcal A_0,
$$
we have that $Z\in \mathcal D_0$ can be written as $Z=\Omega^* z\Omega + z$, where the first summand is in $P\mathcal D_0 P$. For this restriction of the operator we are considering, it is plain that we can replace $L_{|V|}$ with $L_{|\lambda|}$ and $R_{|V|}$ with $R_{|\lambda|}$, since $|V|=|\lambda|$ in $(1-P)\mathcal A (1-P)$.   For brevity we will denote $L=L_{|\lambda|}$, $R=R_{|\lambda|}$ from now on.

Now we write $z=a+b$ with $a^*=a$, $b^*=-b$ and we notice that the building blocks of $\Pi_-\oplus \Pi_+$ are given  (modulo a nonzero factor $\mu^2$) by the operators
\begin{align}
a\longmapsto |\lambda|^2a+a|\lambda|^2-2|\lambda|a|\lambda|-\mu^2 a &=\left((L-R)^2-\mu^2  1\right )a \label{1}\\
b\longmapsto |\lambda|^2b+b|\lambda|^2+2|\lambda|b|\lambda|-\mu^2 b &=\left((L+R)^2-\mu^2  1\right )b\label{2}.
\end{align}
Note that 
$$
\left((L+ R)^2-\mu^2  1\right )=(L+R-\mu  1 )(L+ R+\mu  1 )
$$
and that both factors preserve the skew-adjoint operators. Likewise
$$
\left((L-R)^2-\mu^2  1\right )=(L-R-\mu  1 )(L- R+\mu  1 )
$$
but \textit{neither factor preserves the self-adjoint operators}.  Let $\Lambda=\Lambda(k,s,s')$ denote the index set
$$
\Lambda=\{j\in\mathbb N: \exists s_1\ne s_2\in\sigma(V)\textrm{ with } j |s-s'|=|k||s_1-s_2|\},
$$
as mentioned before this set if finite because $|s_1-s_2|\le 2$.  We denote
$$
H=\Pi_{j\in\Lambda}\big(\big(L-R)^2-\mu_j^2\big)\big|_h,\qquad\qquad K=\Pi_{j\in\Lambda}\left(L+R-\mu_j\right)\big|_{sk}
$$
the restriction of the first operator to $(\mathcal A_0)_h$ (resp. the second one to $(\mathcal A_0)_{sk}$). Here $\mu_j=\frac{j}{|k|}|s-s'|$ as before. 
\end{rem}

\setstretch{1.25}
\begin{lem}\label{muu2} Let $L,R$ be as before, $\mu>0$. Let $z=w|z|\in\mathcal A_0$ be its polar decomposition in $\mathcal A_0''$. All the operators here are considered as operators in $\mathcal A_0$.
\begin{enumerate}
\item[a) ] $L+R+\mu 1$ is invertible, $(L-R)^2-\mu^2  1 $ is invertible for $\mu>1$.
\item[b) ] Let $\mathfrak H=\ker H$ and $\mathfrak K=\ker K$, then
\vspace*{-.2cm}$$\vspace*{-.15cm}
\qquad\mathfrak H=\oplus_{j\in\Lambda}\ker((L-R)^2-\mu_j^2)\big|_h\quad \textrm{ and }\quad\mathfrak K=\oplus_{j\in\Lambda}\ker(L+R-\mu_j)\big|_{sk}.
$$
\item[c) ] $\ker((L-R)^2-\mu^2)|_h=\{f+f^*+g+g^*: f\in\ker(L-R-\mu), g\in \ker(L-R+\mu)\}$.
\item[d) ]  $\ker(L-R\pm \mu)=\{z: |z||\lambda|=|\lambda||z|,\, w^*|\lambda|w=(|\lambda|\mp\mu)P_{|z|}\}$.
\item[e) ]  $\ker((L-R)^2-1)=\{0\}$.
\item[f) ] $\ker(L+R-\mu)=\{z:|z||\lambda|=|\lambda||z|,\, w^*|\lambda|w=(\mu-|\lambda|)P_{|z|}\}$.
\item[g) ]  $\ker(L+R-2)|_{sk}=\{b: \, |\lambda|b=b\}$.
\end{enumerate}
\end{lem}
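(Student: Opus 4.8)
The plan is to treat the seven items separately, using three ingredients: (i) $L=L_{|\lambda|}$ and $R=R_{|\lambda|}$ commute, and since $\|\,|\lambda|\,\|=\|V\|=1$ they both have spectrum contained in $[0,1]$; (ii) $0\le|\lambda|\le 1$ in $\mathcal A_0$ (hence in $\mathcal A_0''$); (iii) $|\lambda|$ is injective on $\mathcal A_0''$, its support projection there being $P_{|\lambda|}$, the unit of $\mathcal A_0''$. For \emph{(a)} I would only read off spectra: $\sigma(L+R)\subseteq\sigma(L)+\sigma(R)\subseteq[0,2]$, so $L+R+\mu 1$ is invertible for every $\mu>0$; and $\sigma(L-R)\subseteq[-1,1]$ gives $\sigma((L-R)^2)\subseteq[0,1]$, so $(L-R)^2-\mu^2 1$ is invertible once $\mu>1$.

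For \emph{(b)} and \emph{(c)} the starting observation is that $M:=L-R$ interchanges $(\mathcal A_0)_h$ and $(\mathcal A_0)_{sk}$ (because $|\lambda|a-a|\lambda|$ is skew-adjoint when $a=a^*$), so both $(L-R)^2$ and $L+R$ preserve these two real subspaces. Then $H$ and $K$ are finite products of pairwise commuting operators, one factor for each of the finitely many distinct values $\mu_j$, $j\in\Lambda$; to conclude $\ker\prod_j(S-\mu_j)=\bigoplus_j\ker(S-\mu_j)$ (with $S=(L-R)^2$, resp.\ $S=L+R$) I would invoke Lagrange interpolation: there are polynomials $q_j$ with $\sum_j q_j(S)=1$ and $(S-\mu_j)q_j(S)$ a scalar multiple of $\prod_i(S-\mu_i)$, so $q_j(S)$ projects any element of the kernel of the product into $\ker(S-\mu_j)$. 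That gives (b). For (c) I would use the root-space decomposition of $\ker(M^2-\mu^2)$: any $z$ with $M^2z=\mu^2z$ is $\tfrac12(z+\mu^{-1}Mz)+\tfrac12(z-\mu^{-1}Mz)$ with summands in $\ker(M-\mu)$ and $\ker(M+\mu)$; if $z=a=a^*$ then $(Ma)^*=-Ma$ forces the second summand to be the adjoint of the first, so $a=f+f^*$ with $f\in\ker(L-R-\mu)$, and conversely every $f+f^*$, resp.\ $g+g^*$ with $g\in\ker(L-R+\mu)$, is self-adjoint and killed by $(L-R)^2-\mu^2$.

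The main work is \emph{(d)} (and \emph{(f)}, which is the same computation). Here $z\in\ker(L-R\pm\mu)$ means $|\lambda|z=z(|\lambda|\mp\mu)$ and $z\in\ker(L+R-\mu)$ means $|\lambda|z=z(\mu-|\lambda|)$. The plan is: first take adjoints (so also $z^*|\lambda|=(|\lambda|\mp\mu)z^*$) and sandwich the two relations to get that $|\lambda|$ commutes with $z^*z$ and with $zz^*$, hence with $|z|$, $P_{|z|}$ and $P_{|z^*|}=ww^*$, where $z=w|z|$ is the polar decomposition in $\mathcal A_0''$; then substitute $z=w|z|$, move $|z|$ across $|\lambda|$, and cancel it on the right using $X|z|=Y|z|\iff XP_{|z|}=YP_{|z|}$, which collapses the identity to $|\lambda|w=w(|\lambda|\mp\mu)$; finally left-multiply by $w^*$ to reach $w^*|\lambda|w=(|\lambda|\mp\mu)P_{|z|}$. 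For the reverse inclusion I would retrace these steps, this time also using $[\,|\lambda|,ww^*\,]=0$ (the adjoint twin of $[\,|\lambda|,|z|\,]=0$): left-multiplying the displayed identity by $w$ and using $ww^*|\lambda|w=|\lambda|ww^*w=|\lambda|w$ recovers $|\lambda|w=w(|\lambda|\mp\mu)$ and hence the root equation. Part (f) is verbatim with $|\lambda|\mp\mu$ replaced by $\mu-|\lambda|$.

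Finally \emph{(e)} and \emph{(g)} I would get from (d)/(f) by a positivity squeeze against $0\le|\lambda|\le 1$. For (e), by (c) it is enough that $\ker(L-R\pm1)=\{0\}$: if $z\in\ker(L-R-1)$, then $w^*|\lambda|w=(|\lambda|+1)P_{|z|}\ge P_{|z|}$ while $w^*|\lambda|w\le w^*w=P_{|z|}$, which forces $|\lambda|P_{|z|}=0$ and then $P_{|z|}=0$ by injectivity; if $z\in\ker(L-R+1)$, then $w^*|\lambda|w=(|\lambda|-1)P_{|z|}\le0$, so $w^*|\lambda|w=0$, so $|\lambda|w=0$, so $w=0$. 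For (g), if $b^*=-b$ and $b\in\ker(L+R-2)$, then (f) gives $w^*|\lambda|w=(2-|\lambda|)P_{|b|}\ge P_{|b|}\ge w^*|\lambda|w$, whence $w^*(1-|\lambda|)w=0$, so $|\lambda|w=w$ and $|\lambda|b=b$; conversely $|\lambda|b=b$ and $b^*=-b$ give $b|\lambda|=b$, hence $(L+R)b=2b$. I expect (d) to be the delicate step: one has to keep the polar decomposition inside the enveloping von Neumann algebra, remember that $|\lambda|$ commutes only with the source and range projections of $z$ and not with $w$ itself, and notice that the reverse inclusion really does need commutation of $|\lambda|$ with $zz^*$ in addition to commutation with $|z|$.
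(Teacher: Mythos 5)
Your proposal reproduces the paper's proof essentially step by step for items (a), (b), (e), (f), (g), and your variant of (c) (splitting $z=\tfrac12(z+\mu^{-1}Mz)+\tfrac12(z-\mu^{-1}Mz)$ and observing that the two pieces are mutual adjoints when $z=z^*$) is a cleaner repackaging of the paper's construction of $f$ and $g$ from $a_\pm$. In (b) Lagrange interpolation and Bezout's identity are interchangeable. So the overall route agrees with the paper.

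The one genuine weakness is the reverse inclusion in (d) (and hence (f)), which you yourself flag in your closing sentence. Your argument there requires $[\,|\lambda|,ww^*\,]=0$, which you justify parenthetically as ``the adjoint twin of $[\,|\lambda|,|z|\,]=0$''; but in this direction the only data available are $[\,|\lambda|,|z|\,]=0$ and $w^*|\lambda|w=(|\lambda|\mp\mu)P_{|z|}$, and these do \emph{not} imply $[\,|\lambda|,ww^*\,]=0$. Concretely, in $\mathcal A_0=M_3(\mathbb C)$ take $|\lambda|=\mathrm{diag}(1,\tfrac12,\tfrac34)$, $\mu=\tfrac38$, and $z=\xi\otimes e_1$ with $\xi=\tfrac1{\sqrt2}(0,1,1)$: then $|z|=P_{|z|}=e_1\otimes e_1$ commutes with $|\lambda|$, the partial isometry is $w=z$, and $w^*|\lambda|w=\langle|\lambda|\xi,\xi\rangle\,e_1\otimes e_1=\tfrac58\,e_1\otimes e_1=(|\lambda|-\mu)P_{|z|}$, yet $ww^*=\xi\otimes\xi$ does not commute with $|\lambda|$ and $\ker(L-R+\mu)=\{0\}\not\ni z$. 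So the right-hand sides of (d) and (f) as written are strictly larger than the kernels; the converse needs $[\,|\lambda|,zz^*\,]=0$ as an additional hypothesis, after which your retracing of the steps is correct. This gap is shared with the paper, which disposes of the reverse inclusion with the phrase ``a direct substitution shows''; it is harmless for the rest of the paper because only the forward inclusion (kernel contained in the set) is used in the proofs of (e) and (g). Still, you should add the extra commutation hypothesis explicitly rather than suggest it follows from $[\,|\lambda|,|z|\,]=0$ by taking adjoints.
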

\setstretch{1.1}
\begin{proof}
$\mathrm {a)}\,$ Since $\sigma(L+R+\mu)\subset [\mu,2+\mu]$ by  \cite[Theorem 10]{rosen}, then $L+R+\mu 1$ is always invertible; in the real case we complexify and use Remark \ref{complexifi} to arrive to the same conclusion. With the same approach  $(L-R)^2-\mu^2  1 $ is invertible for $\mu>1$, since $\sigma(L-R)^2\subset [0,1]$. 

$\mathrm {b)}\,$ We order the indices in $\Lambda=\{j_1,\dots,j_n\}$, and note that the $\mu_{j_k}$ are distinct. That eigenvalues corresponding to different eigenvectors are linearly independent, is well known and elementary. On the other hand by Bezout's identity we can find polyonmials $p_k$ such that
$$
\sum_{k=1}^n p_k(t)\prod_{l\ne k}(t-\mu_{j_l})=1.
$$
Replacing $t$ with $L+R$ we see that for any $a\in\mathcal A_0$ we have
$$
a=\sum_{k=1}^n p_k(L+R)\prod_{l\ne k}(L+R-\mu_{j_l})a=\sum_{k=1}^n a_k,
$$
and it is plain that $a_k \in \ker(L+R-\mu_{j_k})$. Thus the kernel of $K$ is the direct sum of the kernels of its factors. With a similar argument, now using that $(L-R)^2$ maps self-adjoint operators to self-adjoint operators, the kernel of $H$ is the direct sum of the kernels of its factors. 

$\mathrm {c)}\,$ If $f,g$ belong to the stated kernels then $f+g\in \ker((L-R)^2-\mu^2)$ and this operator preserves self-adjoint elements, thus the same holds for $f^*+g^*$, thus $f+g+f^*+g^*\in \ker((L-R)^2-\mu^2$. Now conversely, if $a=a^*\in \ker((L-R)^2-\mu^2)$, using the same argument of item $b)$ we can write $a=a_++a_-$ with $a_{\pm}\in \ker   (L-R\pm \mu)$ respectively. Take $f=(a_+^*+a_-)/4$, $g=(a_++a_-^*)/4$, one can check by hand that they belong to the stated kernels. Then
\begin{align*}
f+f^*+g+g^* &=1/4(a_+^*+a_-+a_++a_-^*+a_++a_-^*+a_+^*+a_-)\\
& =\mathfrak{Re}(a_++a_-)=\mathfrak{Re}(a)=a.
\end{align*}

$\mathrm {d)}\,$ If $z$ obeys both conditions, a direct substitution shows that $z\in \ker(L-R\pm \mu)$. Hence assumme $z\in \mathcal A_0$ verifies 
\begin{equation}\label{zkerr}
|\lambda| z -z|\lambda|=\mp \mu z
\end{equation}
then also $z^*|\lambda|- |\lambda|z^*=\mp \mu z^*$. We multiply the first equation on the left by $z^*$, the second one on the right by $z$ and we substract both equations to arrive to $ |z|^2 |\lambda|- |\lambda| |z|^2=0$, which proves  $|\lambda||z|=|z||\lambda|$.  With a similar approach one sees that $|\lambda||z^*|=|z^*||\lambda|$, hence $|\lambda| w |z|w^*=w|z|w^*|\lambda|$. We multiply this last one on the left by $w^*$ and with to $w$ on the right to obtain
\begin{equation}\label{wlambda}
w^*|\lambda| w |z|=|z|w^*|\lambda|w,
\end{equation}
recalling  $w^*w|z|=P_{|z|}|z|=|z|$. Now we multiply (\ref{zkerr}) on the left with $w^*$ to arrive to
$$
w^*|\lambda|w |z|- |z||\lambda|=\mp \mu |z|.
$$
Using (\ref{wlambda}) we can rewrite it as 
$$
|z| (w^*|\lambda|w -|\lambda|)=\mp \mu |z|,
$$
which implies $P_{|z|}(w^*|\lambda|w - |\lambda|)=\mp \mu P_{|z|}$. Since  $P_{|z|}w^*=w^*$ and  $P_{|z|}$ commutes with $|\lambda|$, we obtain $w^*|\lambda|w=(|\lambda|\mp\mu)P_{|z|}$. 

$\mathrm {e) }\,$ Let $z=w|z|\in \ker(L-R-1)$, then by $d)$ we have 
$$
P_{|z|}(|\lambda|+1)=w^*|\lambda|w\le w^*w=P_{|z|}
$$
hence $P_{|z|}|\lambda|\le 0$ which implies $|\lambda|P_{|z|}=0$. Since $P_{|\lambda|}=1$ is the identity of $\mathcal A_0$, the operator $|\lambda|$ has dense range, hence it is injective, thus it must be $P_{|z|}=0$ which implies $z=0$. Now if $z\in z=w|z|\in \ker(L-R+1)$ a similar reasoning (conjugating first with $w$) imples $z=0$ also. Hence 
$$
\ker ((L-R)^2-1)=\ker (L-R+1)\oplus \ker (L-R-1)=\{0\}.
$$

$\mathrm {f)}\,$ Has proof which is similar to that of $d)$ therefore it is omitted.

$\mathrm {g)}\,$  From the previous item we know that   $w|\lambda|w^*=P_b(2-|\lambda|)$ (here $P_b=P_{|b|}$ because $b^*=-b$). Now $\sigma(w|\lambda|w^*)\subset [0,1]$, and $\sigma(P_b(2-|\lambda|))\subset \{0\}\cup [1,2]$, since $|\lambda|$ and $P_b$ commute. Then it must be 
$$
\sigma(w|\lambda|w^*)=\sigma(P_b(2-|\lambda|))\subset \{0,1\}
$$
and this tells us that this operator is an orthogonal projection, i.e. $p=w|\lambda|w^*=P_b(2-|\lambda|)$. Multiplying by $P_b$ on the left we see that $P_bp=P_b(2-|\lambda|)=p$, and taking adjoint also $pP_b=p$. Thus $p=P_bp=P_bpP_b\le P_b^2=P_b$ and $p$ is a subprojection of $P_b$. We claim that $p=P_b$: from $p=2P_b-P_b|\lambda|$ we see that 
$$
0\le P_b-p=-(1-|\lambda|)P_b.
$$
Now the spectrum of the operator on the right is contained in $[-1,0]$ thus it must be $P_b-p=0$ as claimed. Then we have $P_b=2P_b-P_b|\lambda|$ from where we obtain $P_b=P_b|\lambda|$. Multiplying by $b$ on the left and taking adjoints we conclude that $b=b|\lambda|=|\lambda|b$ as claimed, and this finishes the proof.
\end{proof}

\begin{teo}[Monoconjugate points]\label{mononcon} 
Let $L=L_{|\lambda|}$ amd $R=R_{|\lambda|}$, let $T=T(k,s,s')$ and $\mathfrak H, \mathfrak K$ as in the previous lemma. Then
$$
\ker D(\Exp_P)_{TV}\big|_{P_v\mathcal A P_v}=\left\{ \left(\begin{array}{cc} 0 & \Omega (a+b) \\ (a-b)\Omega^* & 0 \end{array}\right)    : \,a\in\mathfrak H,\, b\in \mathfrak K\right\}.
$$
\end{teo}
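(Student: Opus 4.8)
The plan is to obtain the statement by composing the structural results already in place: Remark \ref{difexp}, Proposition \ref{bloques}, Lemma \ref{luu}, Remark \ref{muu} and Lemma \ref{muu2}. First I would recast everything in terms of $\sinhc(T\ad v)$. By Remark \ref{difexp}, $D(\Exp_P)_{TV}(Y)=e^{Tv}[\sinhc(T\ad v)y,P]e^{-Tv}$ for $Y=[y,P]$ with $y\in\widetilde{\mathcal C}_P$; since $y\mapsto[y,P]$ is injective on $\widetilde{\mathcal C}_P$ (Lemma \ref{campos}) and conjugation by $e^{Tv}$ is a linear isomorphism, $Y\in\ker D(\Exp_P)_{TV}$ exactly when $\sinhc(T\ad v)y=0$. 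As $P_v$ commutes with $P$, the space $\widetilde{\mathcal C}_P$ splits along the three summands of (\ref{ds}), and the part of the kernel lying in $P_v\mathcal A P_v$ is $\{[y,P]:y\in P_v\widetilde{\mathcal C}_P P_v,\ \sinhc(T\ad v)y=0\}$.

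Next I would run the chain of identifications. By Proposition \ref{bloques}, on $P_v\widetilde{\mathcal C}_P P_v$ one has $\sinhc(T\ad v)=L_u(\Pi_-\oplus\Pi_+)L_{u^*}$, and by Lemma \ref{luu} the map $L_{u^*}$ is an isometric isomorphism of $P_v\widetilde{\mathcal C}_P P_v$ onto $\mathcal D_0$ with inverse $L_u$; hence the kernel we want is $L_u\big(\ker(\Pi_-\oplus\Pi_+)|_{\mathcal D_0}\big)$. By Remark \ref{muu}, the assignment $Z\mapsto z=(1-P)Z$ identifies $\mathcal D_0$ with $\mathcal A_0$ (with inverse $z\mapsto\Omega^* z\Omega+z$), and under this identification, writing $z=a+b$ with $a^*=a$ and $b^*=-b$, the operator $\Pi_-\oplus\Pi_+$ acts as $a\mapsto\prod_{j\in\mathbb N}(1-\mu_j^{-2}(L-R)^2)a$ on $(\mathcal A_0)_h$ and as $b\mapsto\prod_{j\in\mathbb N}(1-\mu_j^{-2}(L+R)^2)b$ on $(\mathcal A_0)_{sk}$, by (\ref{1}) and (\ref{2}). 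These two images are self-adjoint and skew-adjoint respectively, hence independent, so $\ker(\Pi_-\oplus\Pi_+)=\{a+b:a\in\ker\prod_{j\in\mathbb N}(1-\mu_j^{-2}(L-R)^2)|_h,\ b\in\ker\prod_{j\in\mathbb N}(1-\mu_j^{-2}(L+R)^2)|_{sk}\}$.

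Finally I would cut these $\mathbb N$-indexed products down to the finite $\Lambda$-indexed ones of Lemma \ref{muu2}. All factors commute, being functions of $L$ and $R$; and for $j\notin\Lambda$ the factor $1-\mu_j^{-2}(L-R)^2$ (resp. $1-\mu_j^{-2}(L+R)^2$) is invertible on $\mathcal A_0$: for $\mu_j>1$ by Lemma \ref{muu2}(a), and for $\mu_j\le 1$ because the definition of $\Lambda$ together with the Lumer--Rosenblum inclusion (Remark \ref{speo}, and Remark \ref{complexifi} in the real case) forces $\pm\mu_j\notin\sigma(L-R)\cup\sigma(L+R)$. Deleting these invertible factors does not change the kernel, so on the self-adjoint side $\ker\prod_{j\in\mathbb N}(1-\mu_j^{-2}(L-R)^2)|_h=\ker\prod_{j\in\Lambda}((L-R)^2-\mu_j^2)|_h=\ker H=\mathfrak H$ (the two finite products agreeing up to the nonzero scalar $\prod_{j\in\Lambda}(-\mu_j^{-2})$). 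On the skew side one further factors $(L+R)^2-\mu_j^2=(L+R-\mu_j)(L+R+\mu_j)$, where both factors preserve $\mathcal A_{sk}$ and $L+R+\mu_j$ is invertible by Lemma \ref{muu2}(a), so deleting also the factors $L+R+\mu_j$ gives $\ker\prod_{j\in\mathbb N}(1-\mu_j^{-2}(L+R)^2)|_{sk}=\ker\prod_{j\in\Lambda}(L+R-\mu_j)|_{sk}=\ker K=\mathfrak K$. Hence $\ker(\Pi_-\oplus\Pi_+)|_{\mathcal A_0}=\{a+b:a\in\mathfrak H,\ b\in\mathfrak K\}$, and transporting back via $Z=\Omega^* z\Omega+z$ and $y=L_uZ$, then reading off the $(1,2)$ and $(2,1)$ blocks through $z=\Omega^*\rchi$ (so that $\rchi=\Omega(a+b)$ and $\rchi^*=(a-b)\Omega^*$), produces the asserted block matrix. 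The main obstacle will be this reduction of the infinite products to the finite $\Lambda$-indexed ones, namely checking that every factor with $j\notin\Lambda$ and every factor $L+R+\mu_j$ is invertible on $\mathcal A_0$; the remainder is a formal composition of the isomorphisms of Lemma \ref{luu} and Remark \ref{muu} with the Bezout splitting of Lemma \ref{muu2}(b), and the bookkeeping of adjoints and signs along $y\leftrightarrow Z\leftrightarrow z$, while needing care, is routine.
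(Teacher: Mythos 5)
Your proof is correct and follows the same route as the paper: identify the kernel with that of $\sinhc(T\ad v)$ via Remark \ref{difexp}, restrict to the $P_v\mathcal A P_v$ block, transport to $\mathcal A_0$ through Lemma \ref{luu} and Remark \ref{muu}, and then recognize the reduced operator as $H\oplus K$. The paper's proof is just a terser version of this, asserting "with the invertible factors removed" where you spell out, via Lemma \ref{muu2}(a) and the Lumer--Rosenblum inclusion of Remark \ref{speo}, why the factors with $j\notin\Lambda$ and the factors $L+R+\mu_j$ can be dropped without changing the kernel; your final translation $z=a+b$, $\rchi=\Omega(a+b)$, $\rchi^*=(a-b)\Omega^*$ matches the asserted block form.
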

\begin{proof}
Using the notation of Remark \ref{muu}, we have to find the kernel of $\Pi_-\oplus \Pi_+$. Then $X=\rchi+\rchi^*\in  \ker D(\Exp_P)_{TV}|_{P_v\mathcal A P_v}$ is obtained by taking $\rchi=\Omega z$, $\rchi^*=z^*\Omega^*$. Now  $\Pi_-$ restricted to the self-adjoint operators of $\mathcal A_0$ is exactly $H$, and  $\Pi_+$ restricted to the skew-adjoints of $\mathcal A_0$ (with the invertible factors removed) is exactly $K$. Then the assertion follows from the previous lemma.
\end{proof}

\smallskip

Now we show that the first conjugate point is always at $T=\pi/2$.

\begin{teo}[First conjugate point]\label{t1cp} The point $Q=\gamma(\frac{\pi}{2})$ is the first conjugate point to $P$ along $\gamma$. The kernel of $D(Exp_P)_{\frac{\pi}{2}V}$ in $T_PGr(P_0)$ 
$$
\{\Omega z-z\Omega^*: z\in \mathcal A_0, \,|\lambda|z=z=-z^*\}.
$$
If $Q$ is not monoconjugate, it is epiconjugate.
\end{teo}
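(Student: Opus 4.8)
The plan is to read off both the kernel and the genuine non-invertibility from the block decomposition of $\sinhc(\tfrac{\pi}{2}\ad v)$ already established, the only new ingredient being a short spectral-radius argument. By Lemma \ref{fcp} and Remark \ref{ftcp}, $Q=\gamma(\tfrac{\pi}{2})=\Exp_P(\tfrac{\pi}{2}V)$ equals $\Exp_P(TV)$ for $T=T(1,1,-1)$; moreover $\tfrac{\pi}{2}$ is the least possible value of $T(k,s,s')$ because $|s-s'|\le 2$ and $\pm 1\in\sigma(V)$ (Lemma \ref{campos}), so $\gamma$ has no conjugate point before $\tfrac{\pi}{2}$, and for this $T$ the only building block of $\sinhc(\tfrac{\pi}{2}\ad v)$ that can be singular is $x\mapsto(4+\ad^2 v)x$ on $\widetilde{\mathcal C}_P$. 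Hence it suffices to (i) exhibit the kernel and (ii) show $4+\ad^2 v$ is not invertible, which will make $Q$ the first conjugate point.

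For (i): with $T=T(1,1,-1)$ the index set of Remark \ref{muu} is $\Lambda=\{1\}$, $\mu_1=2$, so $\mathfrak H=\ker\big((L-R)^2-4\big)\big|_{(\mathcal A_0)_h}$ and $\mathfrak K=\ker(L+R-2)\big|_{(\mathcal A_0)_{sk}}$, where $L=L_{|\lambda|}$, $R=R_{|\lambda|}$. Since $2>1$, Lemma \ref{muu2}(a) gives $\mathfrak H=\{0\}$, while Lemma \ref{muu2}(g) gives $\mathfrak K=\{z\in\mathcal A_0:\ |\lambda|z=z=-z^*\}$. Substituting $\mathfrak H=\{0\}$ into Theorem \ref{mononcon} yields $\ker D(\Exp_P)_{\frac{\pi}{2}V}\big|_{P_v\mathcal A P_v}=\{\Omega z-z\Omega^*:\ z\in\mathcal A_0,\ |\lambda|z=z=-z^*\}$, exactly the set in the statement. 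On the other two summands of the decomposition (\ref{ds}) there is nothing further: $\sinhc(\tfrac{\pi}{2}\ad v)$ is the identity on $(1-P_v)\mathcal A_{sk}(1-P_v)$, and by Remark \ref{pvc}(iv) it has no kernel on $\widetilde{\mathcal C}_{P_v}$ at the first conjugate point; so the displayed set is the entire kernel in $T_PGr(P_0)$.

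For (ii): by Proposition \ref{bloques}, $\sinhc(\tfrac{\pi}{2}\ad v)$ is block diagonal along (\ref{ds}). It is the identity on $(1-P_v)\mathcal A_{sk}(1-P_v)$, and on $\widetilde{\mathcal C}_{P_v}$ it is $L_S+R_S-1$ with $S=\sinhc(\tfrac{\pi}{2}v)=\sinc(\tfrac{\pi}{2}|v|)$; since $\sigma(|v|)\subset[0,1]$ and $\sinc$ is decreasing on $[0,\tfrac{\pi}{2}]$, one has $\sigma(S)\subset[\tfrac{2}{\pi},1]$, hence $\sigma(L_S+R_S-1)\subset[\tfrac{4}{\pi}-1,1]\subset(0,\infty)$ and this block is invertible. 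On $P_v\mathcal A P_v$, after the identifications of Lemma \ref{luu} and Remark \ref{muu}, the operator is — up to the invertible factors $(L-R)^2-4$ on $(\mathcal A_0)_h$ and $L+R+2$ on $(\mathcal A_0)_{sk}$ (both invertible by Lemma \ref{muu2}(a)) — equivalent to $(L+R-2)\big|_{(\mathcal A_0)_{sk}}$. So $D(\Exp_P)_{\frac{\pi}{2}V}$ is invertible iff $(L+R-2)|_{(\mathcal A_0)_{sk}}$ is, and the claim is that it is not. Since $\|V\|=1$ we have $|\lambda|\ge 0$ and $\||\lambda|\|=1$ (Remark \ref{blm}), so $\|L_{|\lambda|}+R_{|\lambda|}\|\le 2$, while $(L_{|\lambda|}+R_{|\lambda|})^n$ applied to $|\lambda|^{1/2}\in\mathcal A_0$ equals $2^n|\lambda|^{n+1/2}$, of norm $2^n$; hence $\|(L_{|\lambda|}+R_{|\lambda|})^n\|\ge 2^n$ for every $n$, and the spectral radius of $L_{|\lambda|}+R_{|\lambda|}$ in $\mathcal B(\mathcal A_0)$ is $2$. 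Therefore $2\in\sigma(L_{|\lambda|}+R_{|\lambda|})$, i.e. $0\in\sigma(L_{|\lambda|}+R_{|\lambda|}-2)$; as $L_{|\lambda|}$ and $R_{|\lambda|}$ commute and have real spectrum, $L+R-2$ has real spectrum and preserves $(\mathcal A_0)_{sk}$, so Remark \ref{complexifi} gives $0\in\sigma\big((L+R-2)|_{(\mathcal A_0)_{sk}}\big)$. Thus this operator, and hence $D(\Exp_P)_{\frac{\pi}{2}V}$, is not invertible, and $Q$ is the first conjugate point to $P$ along $\gamma$.

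Finally, $D(\Exp_P)_{\frac{\pi}{2}V}$ is a bounded linear operator on the Banach space $T_PGr(P_0)$, and a bounded operator that is both injective and surjective is invertible by the open mapping theorem; since $D(\Exp_P)_{\frac{\pi}{2}V}$ is not invertible, if it is not monoconjugate (i.e. injective) then it must fail to be surjective, i.e. it is epiconjugate. The only substantive step here is the identity $r(L_{|\lambda|}+R_{|\lambda|})=2$, i.e. that the candidate $T=\tfrac{\pi}{2}$ is genuinely attained; everything else is bookkeeping with Proposition \ref{bloques}, Theorem \ref{mononcon}, Lemma \ref{muu2} and Remark \ref{pvc}.
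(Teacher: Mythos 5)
Your argument is correct and follows the same skeleton as the paper's proof: reduce via Remark \ref{ftcp} to the single building block $\mu=2$, split along the decomposition (\ref{ds}), use Remark \ref{pvc}(iv) to eliminate the $\widetilde{\mathcal C}_{P_v}$ summand, and read off the kernel from Lemma \ref{muu2}(a),(g) and Theorem \ref{mononcon}. The genuine departure is your proof that $L+R-2$ is not invertible in $\mathcal B(\mathcal A_0)$. The paper's argument is a numerical-range computation in the style of Bonsall--Duncan: it takes a state $\varphi$ on $\mathcal A_0$ with $\varphi(|\lambda|)=1$ to put $2$ in the spatial numerical range of $L+R$, observes that $L+R$ is a Hermitian element of $\mathcal B(\mathcal A_0)$ (via $\|e^{is(L+R)}a\|=\|a\|$), and invokes $V(L+R)=\mathrm{co}(\sigma(L+R))$ to force $2\in\sigma(L+R)$. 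Your replacement is a direct spectral-radius computation by power iteration: $(L+R)^n(|\lambda|^{1/2})=2^n|\lambda|^{n+1/2}$ has norm exactly $2^n$, and together with $\|L+R\|\le 2$ this yields $r(L+R)=2$. This is more elementary and entirely self-contained, avoiding the machinery of Hermitian elements of Banach algebras, while buying exactly the same conclusion. One small refinement you should add: spectral radius $2$ plus real spectrum gives only $\pm 2\in\sigma(L+R)$; to conclude $2\in\sigma(L+R)$ you need $\sigma(L+R)\subset[0,2]$, which is immediate from $\sigma(L)=\sigma(R)=\sigma(|\lambda|)\subset[0,1]$ and commutativity of $L,R$ (or cite Remark \ref{speo}). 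Your extra check that the $\widetilde{\mathcal C}_{P_v}$ block $L_S+R_S-1$ has spectrum in $[\tfrac{4}{\pi}-1,1]$, hence is invertible, is a pleasant bonus but not needed: the paper only requires that block to have trivial kernel, and the non-invertibility of the full operator already follows from the $P_v\mathcal A P_v$ block.
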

\begin{proof}
At the first conjugate point we already noted (Remark \ref{ftcp}) that there is only one possible value of $\mu$, which is $\mu=2$, and in Remark \ref{pvc}.iv) we noted that the all the solutions lay in $P_v\mathcal A P_v$. By Lemma \ref{muu2}$.a)$ we have that $H$ is invertible hence $a=0$ and by the same lemma item $g)$ we see that the kernel of $K$ is exactly $b^*=-b$ such that $|\lambda|b=b$. Now we show that $L+R-2$ is never invertible in $\mathcal A_0$. Let $\varphi$ be a state of $\mathcal A_0$ such that $\varphi(|\lambda|)=\|\lambda\|=1$. Then $\varphi((L+R)1)=2\varphi(|\lambda|)=2$. Since $\varphi(1)=1=\|1\|$, we see that $2\in W(L+R)$, the spatial numerical range of the operator $L+R\in \mathcal B(\mathcal A_0)$. Now $W(L+R)\subset V(L+R)$, the later being the intrinsic numerical range of $L+R$ in the Banach algebra $\mathcal B(\mathcal A_0)$ (in fact, it holds $\overline{\mathrm{co}\,W(L+R)}=V(L+R)$, see \cite[p. 83]{bonsall}). Now note that 
$$
\|e^{is (L+R)}a\|=\|e^{is |\lambda|}ae^{is|\lambda|}\|=\|a\|
$$
for any $a\in\mathcal A_0$, hence $L+R$ is an Hermitian operator in $\mathcal B(\mathcal A_0)$. Since $L+R$ is Hermitian, it is well known that  $\mathrm{co}(\sigma(L+R))=V(L+R)$. Now by Remark \ref{speo}, $\sigma(L+R)\subset [t_0,2]$ for some $t_0\ge 0$. If $2$ does not belong to this spectrum then $\sigma(L+R)\subset [t_0,2-\varepsilon]$ for some $\varepsilon>0$. Then
$$
2\in W(L+R)\subset V(L+R)=\mathrm{co}(\sigma(L+R))\subset [t_0,2-\varepsilon],
$$
a contradiction. Hence $L+R-2$ is not invertible and we are done.
\end{proof}

\begin{rem}\label{qk}The points $Q_k=\gamma(\frac{k\pi}{2})$ are always conjugate to $P$: if they are not monoconjugate they are epiconjugate. This is because the factor $L+R-2$ is always present in $\sinhc(T\ad v)$ (by choosing $j=k$), and in the previous proof we showed that this operator is never invertible in $\mathcal A_0$.
\end{rem}

\begin{rem}[Counting dimensions] To compute the dimension of the kernel at the first tangent conjugate point, by the previous theorem it then  suffices to compute the dimension of $X=\{ \,b^*=-b  \,:\,|\lambda|b=b\}$. Since $\||\,\lambda|\,\|=1$, we know that $1\in\sigma(|\lambda|)$. But it is possible that $1$ is not an eigenvalue of $|\lambda|$, hence in that case the space $X$ is null, thus the kernel is null. 
\end{rem}

\smallskip

On the other hand, if $1$ is an eigenvalue of $V$ (equivalently, it is isolated in the spectrum), let $P_1\in\mathcal A_0$ be the associated eigenprojection let $d=\dim_{\mathbb R}(P_1)$,  it must be
$$
d\le \mathrm{rank}(P_{|\lambda|}) = 
\mathrm{rank}(|\lambda|)=\mathrm{rank}(\lambda^*\lambda)
\le \min\{ \mathrm{rank}(P),\mathrm{rank}(1-P)\}.
$$
Then for the complex Grassmannian we have that the order at $V_0=\frac{\pi}{2}V$ is $d^2$, and for the real Grassmanian it is $\frac{d^2-d}{2}$, since those numbers are the real dimensions of the spaces of skew-symmetric matrices acting on a space of real dimension $d$.

\begin{coro}\label{dimensiond} 
Let $d$ be the real dimension of the fixed point set of $V^2$. Then for the complex Grassmannian we have that the order at the first tangent conjugate point along $\gamma$ with initial unit speed $V$ is $d^2$, provided the partial isometry of $V=U|V|$ belongs to $\mathcal A$. For the real Grassmanian the number is $\frac{d^2-d}{2}$.
\end{coro}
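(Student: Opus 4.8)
The plan is to read the kernel off Theorem \ref{t1cp} and then carry out an elementary dimension count of a matrix algebra; the corollary is in effect a repackaging of the two remarks preceding it, so the work is mostly organizational. First I would record that, by Theorem \ref{t1cp},
\[
\mathcal S := \ker D(\Exp_P)_{\frac{\pi}{2}V} = \{\,\Omega z - z\Omega^* : z\in\mathcal A_0,\ |\lambda|z=z=-z^*\,\},
\]
and reduce the computation of $\dim_{\mathbb R}\mathcal S$ to that of the real vector space $\mathcal Z := \{z\in\mathcal A_0 : z^*=-z,\ |\lambda|z=z\}$ by checking that the real-linear map $z\mapsto \Omega z - z\Omega^*$ is injective: since $z\in\mathcal A_0\subset (1-P)\A(1-P)$ and $\Omega=PU(1-P)\in\A$ (this is where the hypothesis $U\in\A$ is used), the summand $\Omega z$ lies in the corner $P\A(1-P)$ while $z\Omega^*$ lies in $(1-P)\A P$, so $\Omega z - z\Omega^*=0$ forces $\Omega z=0$, whence $z=P_{|\lambda|}z=\Omega^*\Omega z=0$ by Remark \ref{polard}.

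Next I would identify $\mathcal Z$ with a skew-adjoint corner of $\A$. Taking adjoints in $|\lambda|z=z$ and using $z^*=-z$ gives $z|\lambda|=z$ as well; since $|\lambda|\ge 0$ and $\||\lambda|\|=\|V\|=1$ (Remark \ref{blm}), the two conditions $|\lambda|z=z=z|\lambda|$ say exactly that $z=P_1zP_1$, where $P_1$ is the spectral projection of $|\lambda|$ for its top eigenvalue $1$. This $P_1$ is nonzero precisely when $1$ is an eigenvalue of $|\lambda|$, equivalently (using Lemma \ref{campos} and $\|V\|=1$) when $\pm1\in\sigma(V)$ are eigenvalues. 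With $\Omega\in\A$ one has $|\lambda|=\Omega^*|\lambda^*|\Omega\in\A$, and once the eigenvalue $1$ is isolated in $\sigma(|\lambda|)$ the continuous functional calculus puts $P_1\in\A$; in that case $\mathcal Z=(P_1\A P_1)_{sk}$.

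To conclude, let $d=\dim\ran P_1$. Through the polar-decomposition isometry $\Omega$, which carries $\ker(|\lambda|^2-1)$ onto $\ker(|\lambda^*|^2-1)$, this number is exactly the geometric quantity named in the statement via the fixed point set $\ker(V^2-1)=\ker(|\lambda^*|^2-1)\oplus\ker(|\lambda|^2-1)$ of $V^2$, whose two summands are $\Omega$-related copies of $\ran P_1$. Identifying $P_1\A P_1\cong M_d(\mathbb K)$ — with $\mathbb K=\mathbb C$ for the complex Grassmannian, and $\mathbb K=\mathbb R$ for the real one, where one passes to the complexification and back as in Remark \ref{complexifi} — the skew-adjoint part $(P_1\A P_1)_{sk}$ is the real vector space of skew-Hermitian, resp. skew-symmetric, $d\times d$ matrices, of real dimension $d^2$, resp. $\frac{d^2-d}{2}$. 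Since $\dim_{\mathbb R}\mathcal S=\dim_{\mathbb R}\mathcal Z=\dim_{\mathbb R}(P_1\A P_1)_{sk}$, the stated orders follow, and the bound $d\le\min\{\mathrm{rank}(P),\mathrm{rank}(1-P)\}$ recorded before the statement drops out as a byproduct.

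I expect the main obstacle to be the identification $P_1\A P_1\cong M_d(\mathbb K)$, i.e. checking that \emph{every} skew-adjoint operator supported on the $1$-eigenspace of $|\lambda|$ already lies in $\mathcal A_0$. This needs both that $P_1\in\A$ (guaranteed by the hypothesis $U\in\A$ together with isolation of the eigenvalue $1$) and that the relevant corner is a full matrix algebra rather than a proper $*$-subalgebra; this is the step that genuinely uses the ambient structure, and without such factoriality only the inequalities $\dim_{\mathbb R}\mathcal S\le d^2$ (complex), $\le\frac{d^2-d}{2}$ (real), survive. The remaining ingredients — injectivity of $z\mapsto\Omega z-z\Omega^*$ and the count of skew-Hermitian and skew-symmetric matrices — are routine.
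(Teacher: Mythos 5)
Your proposal follows the paper's own route: read the kernel off Theorem \ref{t1cp} (and the remark just before the corollary), reduce to the skew-adjoint elements of $\mathcal A_0$ supported on the top eigenprojection $P_1$ of $|\lambda|$, and count. You supply details the paper leaves implicit — the injectivity of $z\mapsto\Omega z-z\Omega^*$, argued via the off-diagonal corners $P\mathcal A(1-P)$ vs.\ $(1-P)\mathcal A P$, and the equivalence $|\lambda|z=z=z|\lambda|\iff z=P_1zP_1$ — and these are correct fillings of what the paper compresses into ``by the previous discussion this number counts the dimension of the kernel.''

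Two further remarks. First, your factoriality caveat is a genuine observation that the paper does not address: the count $d^2$ (resp.\ $\tfrac{d^2-d}{2}$) is the real dimension of the skew-adjoint part of $M_d(\mathbb K)$, so one needs $P_1\mathcal A_0 P_1$ to actually fill the matrix algebra on $\ran P_1$. This is automatic for $\mathcal A=\mathcal B(\mathcal H)$, but for a general $\mathcal A$ (e.g.\ the direct-sum algebra in Example \ref{pocos}) the corner can be a proper $*$-subalgebra and only the inequality $\le d^2$ (resp.\ $\le\tfrac{d^2-d}{2}$) survives; the paper's proof silently assumes the full corner. Second, be careful with the factor of two: you set $d=\dim\ran P_1$ and then assert this equals the ``real dimension of the fixed point set of $V^2$'' of the statement, but as you yourself observe $\ker(V^2-1)=\ker(|\lambda^*|^2-1)\oplus\ker(|\lambda|^2-1)$ is the direct sum of two $\Omega$-related copies of $\ran P_1$, so those two quantities differ by a factor of $2$. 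This tension is already present in the paper's own formulation and proof of the corollary; do not import the identification uncritically.
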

\begin{proof}
Note that
$$
V^2=\left(\begin{array}{cc} \lambda\lambda^* & 0  \\ 0 & \lambda^*\lambda  \end{array}\right)=\left(\begin{array}{cc} |\lambda^*|^2 & 0  \\ 0 & |\lambda|^2  \end{array}\right).
$$
If we write $\xi=\xi_1+\xi_2\in \mathcal H=\ran(P)\oplus\ran(1-P)$, the fixed points of $V^2$ obey $|\lambda^*|^2\xi_1=\xi_1$ and $|\lambda|^2\xi_2=\xi_2$. Since $\Omega|\lambda|^2\Omega^*=|\lambda^*|^2$ (Remark \ref{polard}), the dimension of the fixed point set is the dimension of the eigenspace of the eigenvalue  $t=1$ of $|\lambda|^2$, or equivalently, of $|\lambda|$. By the previous discussion this number counts the dimension of the kernel.
\end{proof}

\begin{lem}[Eigenvalues of $V$ produce monoconjugate points]\label{otherconj}
Assumme $s\ne s'\in \sigma(V)$ are eigenvalues with respective eigenvectors $\xi_s,\xi_{'}$ and assume that both $\xi_s\otimes\xi_{'}$ and $\xi_{'}\otimes\xi_s$ belong to $\mathcal A$. Then $\gamma(T)$ for $T=T(k,s,s')$ is monoconjugate to $P=\gamma(0)$ for any $k\in\mathbb Z^*$.
\end{lem}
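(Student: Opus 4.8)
The plan is to reduce the statement, via the closed form of the differential of $\Exp_P$, to the production of a single nonzero $P$-codiagonal skew-adjoint operator annihilated by $\sinhc(T\ad v)$, and then to manufacture such an operator out of the given eigenvectors of $V$. Write $v=[V,P]\in\widetilde{\mathcal C}_P$, so $\gamma(t)=e^{tv}Pe^{-tv}$ and, by Remark \ref{difexp}, $D(\Exp_P)_{TV}([y,P])=e^{Tv}[\sinhc(T\ad v)y,P]e^{-Tv}$ for $y\in\widetilde{\mathcal C}_P$. Since $y\mapsto[y,P]$ is injective on $\widetilde{\mathcal C}_P$ (Lemma \ref{campos}(5)), since $\sinhc$ is even, and since by Remark \ref{facto} the operator $\sinhc(T\ad v)$ is a function of $\ad^2 v$ preserving $\widetilde{\mathcal C}_P$, the point $\gamma(T)$ is monoconjugate as soon as there is $0\ne y\in\widetilde{\mathcal C}_P$ with $\sinhc(T\ad v)y=0$. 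Here $T=k\pi/|s-s'|$, so in the Weierstrass factorization of $\sinhc(T\ad v)$ recalled in Remark \ref{facto} the index $j=|k|$ contributes the (commuting) factor $\ad^2 v+|s-s'|^2$; hence it is in fact enough to find $0\ne y\in\widetilde{\mathcal C}_P$ with
$$\ad^2 v\,y=-|s-s'|^2\,y.$$

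I would first produce such an eigenvector inside $\mathcal A$, ignoring for the moment the codiagonality and skew-adjointness requirements. We may assume $s\ne0$ (as $s\ne s'$ at least one of them is nonzero, and both $T(k,s,s')$ and the hypothesis are symmetric under $s\leftrightarrow s'$, $\xi_s\leftrightarrow\xi_{s'}$). From $v=[V,P]$ and $\mathfrak s_P V=-V\mathfrak s_P$ (Lemma \ref{campos}(2)) one gets $v=V\mathfrak s_P$, whence $V(\mathfrak s_P\xi_s)=-\mathfrak s_P(V\xi_s)=-s\,\mathfrak s_P\xi_s$, and a short computation yields
$$v\,\eta=is\,\eta,\qquad v\,\zeta=is'\,\zeta,\qquad \eta:=\xi_s+i\,\mathfrak s_P\xi_s,\quad \zeta:=\xi_{s'}+i\,\mathfrak s_P\xi_{s'},$$
with $\eta,\zeta\ne0$ because $i\notin\sigma(\mathfrak s_P)=\{\pm1\}$. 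Since $v^*=-v$, the standard computation for rank-one operators gives $\ad v(\eta\otimes\zeta)=(is-is')\,\eta\otimes\zeta$, so $w:=\eta\otimes\zeta$ satisfies $\ad^2 v\,w=-|s-s'|^2\,w$. Expanding,
$$w=(\xi_s\otimes\xi_{s'})-i\,(\xi_s\otimes\xi_{s'})\mathfrak s_P+i\,\mathfrak s_P(\xi_s\otimes\xi_{s'})+\mathfrak s_P(\xi_s\otimes\xi_{s'})\mathfrak s_P,$$
which lies in $\mathcal A$ by the hypothesis together with $\mathfrak s_P=2P-1\in\mathcal A$.

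Next I would descend from $w$ to $\widetilde{\mathcal C}_P$. Let $w=w_d+w_c$ be the decomposition into $P$-diagonal and $P$-codiagonal parts (still in $\mathcal A$). Because $v\in\widetilde{\mathcal C}_P$ and $[\mathcal D_P,\mathcal C_P]\subset\mathcal C_P$, $[\mathcal C_P,\mathcal C_P]\subset\mathcal D_P$, the operator $\ad^2 v$ leaves $\mathcal D_P$ and $\mathcal C_P$ invariant, so $w_d$ and $w_c$ are again eigenvectors for $-|s-s'|^2$. Set $u:=w_c$ if $w_c\ne0$, and $u:=\ad v(w_d)\in\mathcal C_P$ otherwise; in the latter case $u\ne0$ (else $\ad^2 v\,w_d=0\ne-|s-s'|^2 w_d$) and is still an eigenvector for the same eigenvalue. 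Thus $0\ne u\in\mathcal C_P\cap\mathcal A$ with $\ad^2 v\,u=-|s-s'|^2 u$. Finally, $\ad v$ commutes with the involution (since $v^*=-v$), so $u^*$ is an eigenvector for the same eigenvalue; hence $\mathfrak{Im}(u)$ and $\mathfrak{Re}(u)$ both are, they lie in $\mathcal C_P\cap\mathcal A$, and they are not both zero. If $\mathfrak{Im}(u)\ne0$, take $y:=\mathfrak{Im}(u)\in\widetilde{\mathcal C}_P$; otherwise $u=\mathfrak{Re}(u)\ne0$ is self-adjoint and $P$-codiagonal and $y:=iu$ is a nonzero skew-adjoint, $P$-codiagonal element of $\mathcal A$. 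In either case $\ad^2 v\,y=-|s-s'|^2\,y$, so $\sinhc(T\ad v)y=0$ and $[y,P]\ne0\in\ker D(\Exp_P)_{TV}$, proving $\gamma(T)$ monoconjugate to $P$. The real case is handled by carrying out the same computation in the complexification of $\mathcal A$ and extracting a representative in the real algebra.

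The step I expect to be the main obstacle is the last one: a rank-one operator is neither $P$-codiagonal nor skew-adjoint, so one must peel off successively the codiagonal part and then the skew part of $w$, checking each time that it does not vanish; in the exceptional case where $\mathfrak{Im}(u)=0$, i.e. the chosen codiagonal eigenvector happens to be self-adjoint, multiplication by $i$ saves the day — the one place where the complex structure is genuinely used.
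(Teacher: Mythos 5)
Your argument is correct in the complex case but takes a genuinely different route from the paper's. You produce an eigenvector of $\ad^2 v$ in $\widetilde{\mathcal C}_P$ by first lifting eigenvectors of $V$ to eigenvectors of the skew-adjoint $v$ (via $\eta=\xi_s+i\,\mathfrak s_P\xi_s$, using $v=V\mathfrak s_P$), forming the rank-one operator $w=\eta\otimes\zeta$, and then peeling off in turn the $P$-codiagonal part and the skew-adjoint part. The paper instead descends to the corner $\mathcal A_0=P_{|\lambda|}\mathcal A P_{|\lambda|}$ and works with the two scalar equations from Remark \ref{muu}: it splits according to the signs of $s,s'$, picks eigenvectors of $|\lambda|$ rather than $v$, and plugs the explicit $a=\xi_s\otimes\xi_{s'}+\xi_{s'}\otimes\xi_s$ (self-adjoint, same-sign case) or $b=\xi_s\otimes\xi_{s'}-\xi_{s'}\otimes\xi_s$ (skew-adjoint, opposite-sign case) directly into the relevant factor. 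Your approach is perhaps cleaner conceptually, as it keeps everything at the level of $\sinhc(T\ad v)$ and avoids the block reduction, at the cost of an extra ``peel-off'' step to land in $\widetilde{\mathcal C}_P$.

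One caution on the real case, which you only hand-wave. Your $w$ is manifestly not in a real $\mathcal A$ (the expansion has coefficients $\pm i$), so ``lies in $\mathcal A$ by hypothesis together with $\mathfrak s_P\in\mathcal A$'' is only true over the complexification, and extracting a nonzero real representative is not automatic. Indeed the corner case $s'=-s$ with a one-(real-)dimensional $s$-eigenspace is genuinely problematic: for example in $\mathbb{RP}^1$ with $\sigma(V)=\{\pm1\}$ the hypotheses of the lemma are satisfied, yet $\ad^2 v\equiv 0$ on $\widetilde{\mathcal C}_P$ and there are no conjugate points at all, so no real representative exists. This is not a defect of your method alone --- the paper's own proof explicitly flags the same caveat (``one would need to ask that the eigenspace of $s$ has real dimension at least $2$; this is plain for the complex case''), and the stated lemma is imprecise in exactly that corner --- but your write-up should make the extra hypothesis explicit rather than deferring to ``carry out the same computation in the complexification and extract a representative.''
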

\begin{proof}
Consider the equations for $j=k$, $s_1=s$, $s_2=s'$ which appear in Remark \ref{muu}
\begin{align}
|\lambda|^2a+a|\lambda|^2-2|\lambda|a|\lambda|-|s-s'|^2a &=0\label{B1}\\
|\lambda|^2b+b|\lambda|^2+2|\lambda|b|\lambda|-|s-s'|^2b &=0\label{B2}.
\end{align}
for self-adjoint $a\in \mathcal A_0$ (resp. skew-adjoint $b$). Assumme $\mathcal A_0$ represented in some Hilbert space $\mathcal H$. We can safely assume that $0<s\le 1$. Consider first the case of $0< s'<s$. Let $\xi_s,\xi_{'}\in \mathcal H$ such that $|\lambda|\xi_s=s\xi_s$ and $|\lambda|\xi_{'}=s'\xi_{'}$. Note that $\xi_s,\xi_{'}\in \ran P_{|\lambda|}\cap\ran(1-P)$ since $\xi_s=s^{-1}|\lambda|\xi_s$ and likewise for $s'$. Consider $a=a^*=\xi_s\otimes\xi_{'}+\xi_{'}\otimes \xi_s\in\mathcal A$, then 
$$
P_{|\lambda|}a=(P_{|\lambda|}\xi_s)\otimes\xi_{'}+(P_{|\lambda|}\xi_{'})\otimes \xi_s=a
$$
and likewise $aP_{|\lambda|}=a$. The same argument shows that $(1-P)a=a=a(1-P)$, which shows that $a\in \mathcal A_0$. On the other hand, it is easy to  check that $a$ is a solution of equation (\ref{B1}), hence the kernel is nontrivial. If $s<s'\le 1$, the same solution applies since we can exchange $s,s'$. Now assume  $s'\le 0$, take $\xi_s$ as before and $\xi_{'}$ such that $|\lambda|\xi_{'}=|s'|\xi_{'}=-s'\xi_{'}$. In this case consider $-b^*=b=\xi_s\otimes\xi_{'}-\xi_{'}\otimes \xi_s\in\mathcal A_0$. Then it is easy to check that $b$ is a solution of equation (\ref{B2}). If $s'\ne -s$, then $b\ne 0$ and the kernel is nontrivial. If $s'=-s$, one would need to ask that the eigenspace of $s$ has real dimension at least $2$ (this is plain for the complex case), for in this case one can take two linearly independent eigenvectors $\xi_s,\xi_{'}$ of the eigenvalue $s$, and then $b\ne 0$.
\end{proof}

\begin{rem}[Compact operators and the restricted Grassmannian]\label{compact} Consider the unitary group of a proper ideal $\mathcal I\subset\mathcal K(\mathcal H)$ of compact operators (cf. Gohberg and Krein \cite[Chapter III]{gk}):
$$
\mathcal U_{\mathcal I}=\{u\in\mathcal U(H):u-1\in \mathcal I\}=\exp\{A: A^*=-A\in\mathcal I\}.
$$
A relevant case of infinite dimensional Grassmannian occurs when we consider the coadjoint orbit a a projection $P\in\mathcal B(\mathcal H)$ for the action of the group $\mathcal U_{\mathcal I}$, i.e.
$$
Gr_{\mathcal I}(P_0)=\{uPu^*: u\in\mathcal U_{\mathcal I}\}.
$$ 
Then the number $d$ of Corollary \ref{dimensiond} is finite, and the statement of that corollary holds for the restricted Grassmannian, with the same proof (despite the fact that in general it is not the unitary group of a $C^*$-algebra).

\smallskip

We also want to mention that the  argument in the previous Lemma \ref{otherconj} holds for these restricted Grassmannians (since the ideal $\mathcal I$ contains all finite rank operators $\xi_s\otimes\xi_s'$), with one exception. Indeed since $V$ is compact then $|\lambda|$ is positive compact, thus for each nonzero eigenvalue we have a finite dimensional nontrivial eigenspace. The exception is for the case of $s\ne 0, s'=0$, i.e. the candidate to conjugate point $v$ such that $\ad^2v+s^21=0$ with $0\ne s\in\sigma(|\lambda|)$. This is an exception because the kernel of $V$ (equivalently, of $|\lambda|$) might be trivial thus we cannot build neither $a$ nor $b$. Thus \textit{all candidates $Q=\gamma(T)$ are monoconjugate to $P=\gamma(0)$}, except perhaps for the case of $T=(k,s,0)$. Other restricted Grassmannians can be approached with our techniques, for instance those considered in \cite{ratiu} by Ratiu et al.
\end{rem}
 
\begin{ejem}[First epiconjugate point which is not monoconjugate]\label{noesmono}  Let $\mathcal H=L^2[-1,1]$, and let $\mathcal A=\mathcal B(\mathcal H)$. Let $P$ be the orthogonal projection given taking the even part of a function $f\in \mathcal H$, i.e.
$$
Pf(x)=\frac{1}{2}(f(x)+f(-x)).
$$
Let $V^*=V\in\mathcal A$ be given by $Vf(x)=xf(x)$. Then $\sigma(V)=[-1,1]$ and in particular $\|V\|=1$. Moreover if $v=[V,P]$ then  $vf(x)=xf(-x)$. Now
$$
PVf(x)=\frac{1}{2}(xf(x)-xf(-x))=V(1-P)f(x)
$$
hence $V=PV+VP$ thus $V$ is $P$-codiagonal. Let $\gamma$ be the geodesic through $P$ with intial speed $V$. We claim that $Q=\gamma(\frac{\pi}{2})$ \textit{is not monoconjugate but epiconjugate to $P$}.

\smallskip

To this end, note first that $|V|f(x)=|x|f(x)$ and it is plain that $\sigma(|V|)=[0,1]$ while $|V|$ has no eigenvalues. This also tells us that  $P_Vf(x)=f(x)$ i.e. $P_V$ is the identity operator. Therefore $\sinhc(T\ad V)$ is unitary equivalent to $H\oplus K$ for any $T$ (Remark \ref{pvc}.i). We have $P_{|\lambda|}=1-P$ and moreover $|\lambda|=(1-P)|V|(1-P)$ is described by $(|\lambda|f)(x)=|x|f(x)$ for odd functions $f\in L^2[-1,1]$, which is the range of $1-P$. We use the characterization of first conjugate points obtained in Theorem \ref{t1cp}.  The point $Q$ must be conjugate to $P$ by Theorem \ref{t1cp}. But $|\lambda|b=b$ for $b^*=-b$ has no solutions in $\mathcal A_0$, because $|\lambda|$ has no eigenvalues. Hence the point is not monoconjugate but epinconjugate.\end{ejem}

\smallskip

By taking the product of algebras, we show an  example where 

\begin{ejem}[$Q=\gamma(\pi/2)$ is monoconjugate and epiconjugate to $P$]\label{eslasdos} Consider the direct sum $\mathcal A=\mathcal B(\mathcal H)\oplus M_2(\mathbb C)$, with the maximum norm, with $\mathcal H=L^2[-1,1]$ as above. Let $P,V$ be as in the previous example and let $p=e_1\otimes e_1\in M_2(\mathbb C)$, while $w=e_1\otimes e_2-e_2\otimes e_1$ is $p$-codiagonal, skew adjoint and of unit norm. consider $P'=(P,p)$, $V'=(V,w)$, then with the product in each coordinate it is plain that $P'$ is a projection and $V'$ is $P'$-codiagonal and of unit norm. In both cases $P_v,P_w$ is the unit of the respective algebra, hence $P_{V'}$ is the unit of $\mathcal A$. Therefore for the first conjugate point we are again dealing only with the right-down corner of the algebra $\mathcal A$ by Remark \ref{pvc}.i), which is the direct sum of both corners. Moreover $|\lambda|(f,\xi)=(|x|f,\xi)$ for odd $f\in L^2[-1,1]$ and $\xi\in \mathbb C$, i.e.
$$
L_{|\lambda|}=\left(\begin{array}{cc}
M & 0\\
0 & 1
\end{array}\right)
$$
Where $M=M_{|x|}$ is the multiplication operator. In the second coordinate, the kernel is the span of $b=(0,i)$, in particular $Q$ is monoconjugate to $P$. Now $L_M+R_M-2$ is not invertible and injective (previous example), therefore it is not surjective and $Q$ is also epiconjugate to $P$.
\end{ejem}

\subsection{Projective spaces}\label{spc} We now characterize the kernel for all conjugate points in projective spaces, presented as the orbit of a one-dimensional projection:

\begin{ejem}[Complex projective space]\label{3por3} Here $P$ is (complex) one-dimensional projection. In this case $\lambda\rchi^*$, $\rchi\lambda^*$ are complex numbers and $\lambda\lambda^*$ is a real non-negative number. The normalization condition implies that 
$\lambda\lambda^*=\|\lambda\lambda^*\|=1$, and this also tells us that $p=|\lambda|^2=\lambda^*\lambda$ is a one-dimensional projection in $\mathcal A_0$. It is apparent that $\Omega=\lambda\in\mathcal A$ and in particular solving for $z\in \mathcal A_0$ solves the problem in $\mathcal A$.
\end{ejem}

\begin{prop}\label{losproye}
For each tangent $V$ in the complex projective space, there are two kinds of monoconjugate points: 
\begin{enumerate}
\item[i)] $T_0=(2k+1)\frac{\pi}{2}, \, k\in\mathbb Z$: the kernel is spanned by $X=iV$ (in particular  the order is always $1$)
\item[ii)] $T_1=k\pi, k\in \mathbb Z_{\ne 0}$: the kernel is given by all $b^*=-b\in\mathcal A_0$, and the whole $\widetilde{\mathcal C}_{P_v}$.
\end{enumerate}
For the case of $Gr(P_0)=\mathbb C^n/\mathbb C$, the order of the $T_1$ points is $2(2n-3)$.
\end{prop}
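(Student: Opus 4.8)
The plan is to feed the rank-one normal form of Example~\ref{3por3} into the general descriptions of $\ker D(\Exp_P)_{TV}$ supplied by Theorem~\ref{mononcon} and Remark~\ref{pvc}; since the corner $\mathcal A_0$ is now isomorphic to $\mathbb C$, these collapse almost completely. First I would fix a unit tangent vector $V=[v,P]$ at the one-dimensional projection $P$ and keep the notation $\lambda=PV(1-P)$, $\Omega=\lambda$, $p=\lambda^*\lambda=|\lambda|^2$, $\mathcal A_0=p\mathcal A p$ of that example. There $\lambda\lambda^*=1$ on $\ran P$, so $p$ is a rank-one projection, $\Omega\in\mathcal A$, $\mathcal A_0\cong\mathbb C$, and $P_v=P_{|\lambda^*|}+P_{|\lambda|}=P+p$ has rank two; moreover $V^2=\mathrm{diag}(\lambda\lambda^*,\lambda^*\lambda)=P_v$, so $\sigma(V)=\{-1,0,1\}$ when $n\ge 3$ (and $\sigma(V)=\{-1,1\}$, $P_v=1$, when $n=2$). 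By Lemma~\ref{fcp} the only differences $|s-s'|$ among distinct elements of $\sigma(V)$ are $1$ and $2$, so the candidates $T(k,s,s')=k\pi/|s-s'|$ are exactly the nonzero half-integer multiples of $\pi$, which split into the odd multiples of $\pi/2$ (the $T_0=(2k+1)\frac\pi2$) and the multiples of $\pi$ (the $T_1=k\pi$).

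For the block $P_v\mathcal A P_v$ I would use Theorem~\ref{mononcon}. On $\mathcal A_0\cong\mathbb C$ the operators $L=L_{|\lambda|}$ and $R=R_{|\lambda|}$ are both the identity, so $H=\prod_{j\in\Lambda}\left((L-R)^2-\mu_j^2\right)\!\big|_h=\bigl(\prod_{j\in\Lambda}(-\mu_j^2)\bigr)\,1$ is invertible, whence $\mathfrak H=0$, while $K=\prod_{j\in\Lambda}(L+R-\mu_j)\big|_{sk}=\bigl(\prod_{j\in\Lambda}(2-\mu_j)\bigr)\,1$ on $(\mathcal A_0)_{sk}$; since the $\mu_j$ are distinct, $\mathfrak K=\ker K$ equals the one-real-dimensional space $(\mathcal A_0)_{sk}$ when $2\in\{\mu_j:j\in\Lambda\}$ and is $\{0\}$ otherwise. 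I would then check that $2$ always occurs: writing $T_0=T(m,1,-1)$ with $m$ odd gives $\Lambda=\{|m|\}$ with $\mu_{|m|}=2$, and writing $T_1=T(k,1,0)$ gives $\Lambda=\{|k|,2|k|\}$ with $\mu_{2|k|}=2$. Hence Theorem~\ref{mononcon} yields that the $P_v\mathcal A P_v$-part of $\ker D(\Exp_P)_{TV}$ is, at \emph{every} candidate, the line spanned by $\Omega(ip)-(ip)\Omega^*$; using $\lambda p=\lambda\lambda^*\lambda=\lambda$ this vector equals $i(\lambda-\lambda^*)=i[P,V]=J(V)$, which is the direction $X=iV$ of item (i) under $\widetilde{\mathcal C}_P\cong T_PGr(P_0)$, and in particular is one-dimensional, so the $T_0$ points have order $1$.

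For the codiagonal block I would use Remark~\ref{pvc}. Here $\sigma(|v|)=\sigma(|V|)=\{0,1\}$ with unique nonzero point $s_0=1$, so by Remark~\ref{pvc}(ii)--(iv) the restriction of $\sinhc(T\ad v)$ to $\widetilde{\mathcal C}_{P_v}$ has nontrivial kernel precisely when $T\in\pi\mathbb Z^*$; thus for $T_0$ this block contributes nothing, whereas for $T_1=k\pi$ one has $\sigma(T_1V)\subseteq\pi\mathbb Z$, so Remark~\ref{pvc}(iii) gives that the kernel on this block is the whole of $\widetilde{\mathcal C}_{P_v}$. The block $(1-P_v)\mathcal A(1-P_v)$ never contributes, since $\sinhc(T\ad v)$ is the identity there. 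Because $P\le P_v$, the part of $\widetilde{\mathcal C}_{P_v}$ lying in $T_PGr(P_0)=\widetilde{\mathcal C}_P$ is exactly the space of skew-adjoint operators supported on the blocks between $\ran P$ (dimension $1$) and $\ran(1-P_v)$ (dimension $n-2$), of real dimension $2(n-2)$. Assembling the three blocks: at $T_0$ the kernel is the line $\mathbb R\,iV$, of order $1$; at $T_1$ it is that line together with the whole $\widetilde{\mathcal C}_{P_v}$, of real dimension $1+2(n-2)=2n-3=\dim_{\mathbb R}Gr(P_0)-1$ (only the velocity direction $V$ escapes the kernel); in particular every candidate is monoconjugate.

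The step I expect to be the main obstacle is the bookkeeping needed to apply Proposition~\ref{bloques} on $\widetilde{\mathcal C}_P$ rather than on the ambient $\mathcal A_{sk}$: one must verify that, because $P\le P_v$, the tangent space $\widetilde{\mathcal C}_P$ decomposes as the two-dimensional piece inside $P_v\mathcal A_{sk}P_v$ (the blocks $\ran P\leftrightarrow\ran p$, which contain both $V$ and the kernel direction $iV$) plus the $2(n-2)$-dimensional piece inside $\widetilde{\mathcal C}_{P_v}$ (the blocks $\ran P\leftrightarrow\ran(1-P_v)$), with nothing in $(1-P_v)\mathcal A_{sk}(1-P_v)$, so that no part of the kernel is omitted or double-counted; and one must chase the (non-unique) representations $T=T(k,s,s')$ through the index sets $\Lambda$ and the values $\mu_j$. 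Everything else is an immediate consequence of $\mathcal A_0\cong\mathbb C$.
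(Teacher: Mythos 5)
Your argument is correct and follows the same overall decomposition as the paper's own proof (the $P_v\mathcal A P_v$ block handled via Theorem~\ref{mononcon} and Lemma~\ref{muu2}, the codiagonal block via Remark~\ref{pvc}), but it arrives at a different, and in fact the right, count for the order at the $T_1$ points. You obtain $1+2(n-2)=2n-3$: the summand $1$ from $(\mathcal A_0)_{sk}$, since $\mathcal A_0=P_{|\lambda|}\mathcal A P_{|\lambda|}\cong\mathbb C$ because $P_{|\lambda|}$ is a rank-one projection, and $2(n-2)$ from $\widetilde{\mathcal C}_{P_v}\cap\widetilde{\mathcal C}_P$ (the blocks between $\ran P$ and $\ran(1-P_v)$). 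The figure $2(2n-3)=4n-6$ in the statement is impossible already on dimensional grounds: $\dim_{\mathbb R}Gr(P_0)=2n-2$ and the geodesic direction is never annihilated, so the kernel has real dimension at most $2n-3$; and $2n-3$ is exactly what the classical Jacobi-field picture for $\mathbb C^n/\mathbb C$ gives at $t=\pi$ along a closed geodesic of period $\pi$ (one curvature-$4$ direction plus $2(n-2)$ curvature-$1$ directions). The paper's proof loses track of the identity of $\mathcal A_0$, computing as if it had complex dimension $n-2$ (the rank of $1-P_v$, not of $P_{|\lambda|}$), and also measures the full $\widetilde{\mathcal C}_{P_v}$ rather than its intersection with $T_PGr(P_0)=\widetilde{\mathcal C}_P$. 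Your count should replace the one in the statement; the description of the kernels in items (i) and (ii), which you reproduce, is correct.
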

\begin{proof}
Since $\sigma(V)=\pm \sigma(|\lambda|)=\{-1,0,1\}$ we have only two possibilities $\{|s-s'|:s\ne s'\in\sigma(V)\}=\{1,2\}$. From Lemma \ref{fcp} we then obtain two families of times
$$
T_0=(2k+1)\frac{\pi}{2} \quad (k\in\mathbb Z)\quad \textrm{ and } \quad T_1=k\pi\quad (k\in \mathbb Z_{\ne 0}),
$$
the first one corresponding to $|s-s'|=2$ and the second one corresponding to $|s-s'|=1$. For the points of type $T_0$, we note that $2j=(2k+1).s_0$ is impossible because here the only possibility for $0\ne s_0\in\sigma(|V|)$ is $s_0=1$, thus by Remark \ref{pvc}.iii) the $P_v$-codiagonal part of the kernel is null. Now we examine the other part: by Theorem \ref{mononcon}  it must be $\frac{2|j|}{|2k+1|}=1$ or $\frac{2|j|}{|2k+1|}=2$. The first possibility is again excluded since $j\ne 0$ is integer. The second possibility occurs when $|j|=|2k+1|$, and the only operator (\ref{opermu}) that might not be invertible is for $\mu=2$. By Lemma  \ref{muu2} we have $a=0$ (since $\mu>1$)  by the same lemma $|\lambda|b=b$. Since $|\lambda|$ is a (real) $2$-dimensional projection the unique skew-adjoint solution is $b=i|\lambda|=i\lambda^*\lambda$ or equivalently $\rchi=\Omega b=\lambda i\lambda^*\lambda=i\lambda$, thus $X=iV$. 

Now we consider the points of the second kind $T_1=k\pi$. We first take a look in $\mathcal A_0$: by Theorem \ref{mononcon} we have to consider the cases $|j|=|k|$ or $|j|=|2k|$ which are both possible, hence we have $\mu=1$ and $\mu=2$, i.e.
\begin{align*}
((L-R)^2- 1)((L-R)^2-4)a=0\\
((L+R)^2- 1)((L+R)^2-4)b=0.
\end{align*}
By Lemma \ref{muu2}$.a)$ and $e)$ we have $a=0$. Cancelling the invertible terms in the second equation (Lemma \ref{muu2}), it must be $((L+R)- 1)((L+R)-2)b=0$. Since in this case $|\lambda|=P_{|\lambda|})$, we have 
$$
(L+R-2)b=pb+bp-2b=b+b-2b=0
$$
for any $b^*=-b\in\mathcal A_0$. For the case of $\mathcal A=M_n(\mathbb C)$, the dimension of $\ran(1-P)\mathcal A(1-P)$ is $n-1$, and inside it the dimension of $p=P_{|\lambda|}$ is $1$. Thus the dimension of $\mathcal A_0$ is $n-2$. Thus the space of skew-adjoint operators has complex dimension $n-2$, and real dimension $2n-4$. Now we take a look in $\mathcal C_{P_v}$, we note that it must be $j.1=k.1$ which is possible, hence by Remark  \ref{pvc}.iii) the kernel is non-trivial. Note that $|V|=P_v$ hence equation (\ref{sxpv}) gives
$$
xP_v=\sinhc(k\pi P_v)x=(1-P_v)x
$$
showing that any $x\in\widetilde{\mathcal C}_{P_v}$ is a solution. For the case of $\mathbb C^n/\mathbb C$, the complex dimension of $\widetilde{\mathcal C}_{P_v}$ is $n-1$, hence the real dimension is $2n-2$. Adding $2n-4+2n-2$ gives the full dimension of the kernel.
\end{proof}

\begin{ejem}[Real projective space]\label{3por3R} In this case the computations are done in the same fashion as in the previous example, but now the subspaces are real. Therefore the solutions for the points of type $T_0$ is $\{0\}$, because there are no skew-adjoint matrix in real dimension $1$. Therefore these are not monoconjugate points. The points of type $T_1$  fulfill the same conditions as in the previous example: $b^*=-b\in \mathcal A_0$  and the whole space of $P_v$ co-diagonal operators. For the case of $Gr(P_0)=\mathbb R^n/\mathbb R$, these spaces have dimension $n-2$ and $n-1$ respectively so the order of these points is $2n-3$.
\end{ejem}

\subsection{Beyond first conjugate point}\label{sbc}

By means of the calculations and remarks of the previous sections (in particular Remark \ref{muu}), the study of the differential of the exponential map $\Exp_P$ along $\gamma(t)=\Exp_p(tV)$, restricted to the subspace $P_vT_PGr(P_0)P_v$ can be subsumed to the study of the pair of operators $H,K\in \mathcal B(\mathcal A_0)$ defined before Lemma \ref{muu2}. Since $H,K$ preserve the spaces of self-adjoint and skew adjoint operators, it is plain that $D(\Exp_P)$ is not surjective here (resp. not bounded below) if and only if either $H$ or $K$ is not surjective (resp. not bounded below) here. Recall (Lemma \ref{fcp}) that
$$
T(k,s,s')=\frac{k\pi}{|s-s'|},\qquad k\in\mathbb Z^*=\mathbb Z\setminus\{ 0\}, \quad s\ne s'\in\sigma(V)
$$
are the candidates to conjugate points along the geodesic $\gamma(t)=\Exp_P(tV)$, for $V\in T_PGr(P_0)$ of unit length. The points $Q_k=\gamma(\frac{k\pi}{2})$ are always conjugate to $P$ along $\gamma$ (Remark \ref{qk}). For other $T=T(k,s,s')$, for some particular algebras this is also true.

\begin{lem}\label{prime}
Assumme $\mathcal A_0$ is a von Neumann factor or a prime $C^*$-algebra. Take any $T=T(k,s,s')$. If $Q=\gamma(T)$ is not monoconjugate to $P$, then $Q$ is epiconjugate to $P$.
\end{lem}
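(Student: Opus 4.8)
The plan is to establish the statement that is, given the hypotheses, equivalent to the assertion: namely that $D(\Exp_P)_{TV}$ is not invertible at the candidate time $T=T(k,s,s')$. Indeed, ``$Q$ is not monoconjugate to $P$'' means precisely that $D(\Exp_P)_{TV}$ is injective, and a bounded injective non-invertible operator on the Banach space $T_PGr(P_0)$ cannot be surjective (open mapping theorem), so $Q$ is then epiconjugate. Thus the whole point is to show that a candidate conjugate point along $\gamma$ is always a genuine conjugate point when $\mathcal A_0$ is prime or a factor.

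First I would pull everything down to $\mathcal A_0$ using the machinery already assembled. By Proposition \ref{bloques}, Lemma \ref{luu} and Remark \ref{muu}, after the isometric identifications $L_{u^*}$ and $\Ad_{\Omega}$ the operator $\sinhc(T\ad v)$ splits as a direct sum along the three summands of (\ref{ds}): it is the identity on $(1-P_v)\mathcal A_{sk}(1-P_v)$, it is $L_S+R_S-1$ with $S=\sinhc(Tv)$ on $\widetilde{\mathcal C}_{P_V}$, and on $P_vT_PGr(P_0)P_v$ its only possibly non-invertible part is $H\oplus K$ acting on $(\mathcal A_0)_h\oplus(\mathcal A_0)_{sk}$, with $H=\prod_{j\in\Lambda}\big((L-R)^2-\mu_j^2\big)\big|_h$, $K=\prod_{j\in\Lambda}(L+R-\mu_j)\big|_{sk}$ and $L=L_{|\lambda|}$, $R=R_{|\lambda|}$. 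Since the factors of each product commute (they are polynomials in $L-R$, resp.\ $L+R$, both of which leave $(\mathcal A_0)_h$, resp.\ $(\mathcal A_0)_{sk}$, invariant), $H$ (resp.\ $K$) is invertible iff every factor is. Hence it is enough to show, for the distinguished index $j=|k|\in\Lambda$ (for which $\mu_{|k|}=|s-s'|$), that one of $(L-R)^2-\mu_{|k|}^2$ on $(\mathcal A_0)_h$ and $L+R-\mu_{|k|}$ on $(\mathcal A_0)_{sk}$ fails to be invertible, or else that the $\widetilde{\mathcal C}_{P_V}$-block fails to be invertible.

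The key step is to locate $\mu_{|k|}$ in the spectrum, and this is exactly where the hypothesis is used. Since $L$ and $R$ are commuting Hermitian operators of $\mathcal B(\mathcal A_0)$ and $\mathcal A_0$ is a von Neumann factor or a prime $C^*$-algebra, the sharp form of the spectral formula recalled in Remark \ref{speo} (the equality, not merely the inclusion $\subseteq$ valid in general, which is what fails when the center is nontrivial, cf.\ Example \ref{pocos}) gives $\sigma(L-R)=\sigma(|\lambda|)-\sigma(|\lambda|)$ and $\sigma(L+R)=\sigma(|\lambda|)+\sigma(|\lambda|)$; by Remark \ref{complexifi} these spectra do not change when we restrict $(L-R)^2$ to $(\mathcal A_0)_h$ and $L+R$ to $(\mathcal A_0)_{sk}$. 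Next, using that $\sigma(V)$ is balanced (Lemma \ref{campos}(3)) and that $\Omega$ conjugates $\lambda\lambda^*$ to $\lambda^*\lambda$ and identifies the corner $P_{|\lambda^*|}\mathcal A P_{|\lambda^*|}$ with $\mathcal A_0$ (Remark \ref{polard}), every nonzero $r\in\sigma(V)$ has $|r|\in\sigma_{\mathcal A_0}(|\lambda|)$. I would then split into cases on $s,s'\in\sigma(V)$: if both are nonzero of the same sign, $\mu_{|k|}=\big||s|-|s'|\big|\in\sigma(L-R)$, so $\mu_{|k|}^2\in\sigma((L-R)^2)$ and $(L-R)^2-\mu_{|k|}^2$ is not invertible; if both are nonzero of opposite signs, $\mu_{|k|}=|s|+|s'|\in\sigma(L+R)$ and $L+R-\mu_{|k|}$ is not invertible; and if one of them, say $s'$, is $0$, then $0\in\sigma(V)$, and either $0$ is an eigenvalue of $V$, in which case $P_v\neq 1$ and Remark \ref{pvc}(iii) (with $s_0=|s|$ and $j=|k|$, so $Ts_0=|k|\pi$) already exhibits a nonzero kernel vector inside $\widetilde{\mathcal C}_{P_V}$, or $0$ is not an eigenvalue of $V$, in which case $P_v=1$, which forces $0\in\sigma_{\mathcal A_0}(|\lambda|)$, so that $\mu_{|k|}=|s|-0\in\sigma(L-R)$ kills $(L-R)^2-\mu_{|k|}^2$ as before. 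In all cases $D(\Exp_P)_{TV}$ is not invertible, and the first paragraph then finishes the proof.

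I expect the main obstacle to be precisely the degenerate case where one of $s,s'$ vanishes: there the $\mathcal A_0$-block can genuinely be invertible (for instance when $|\lambda|$ is invertible in $\mathcal A_0$), so the non-invertibility must be detected in the codiagonal block $\widetilde{\mathcal C}_{P_V}$, and making the dichotomy ``$0$ is/is not an eigenvalue of $V$'' precise requires careful bookkeeping with the projections $P_v,P_{|\lambda|},P_{|\lambda^*|}$ and with the relations among $\sigma(V)$, $\sigma(|V|)$, $\sigma(\lambda\lambda^*)$ and $\sigma_{\mathcal A_0}(\lambda^*\lambda)$ mediated by $\Omega$. The remaining points — that a commuting factor with $0$ in its spectrum obstructs invertibility of the product, and that restricting a Hermitian operator to the self- or skew-adjoint part of $\mathcal A_0$ does not enlarge its spectrum — are routine, the latter via Remark \ref{complexifi}.
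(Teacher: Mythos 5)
Your argument is correct and follows essentially the same route as the paper's proof: split $\sinhc(T\ad v)$ along the three blocks of (\ref{ds}), invoke the Lumer--Rosenblum/Mathieu equality $\sigma(L\pm R)=\sigma_{\mathcal A_0}(|\lambda|)\pm\sigma_{\mathcal A_0}(|\lambda|)$ (valid because $\mathcal A_0$ is a factor or prime), case-split on the signs of $s,s'$ using the distinguished index $j=|k|$, and conclude. Two of your choices are genuine improvements. First, your closing reduction via the open mapping theorem (injective and non-invertible implies non-surjective on a Banach space) is shorter and cleaner than the paper's detour through ``not bounded below.'' Second, and more substantively, you isolate and treat the degenerate case $s'=0$, which the paper glosses over: the paper simply writes $\sigma(L\pm R)=\{|s_1|\pm|s_2|:s_i\in\sigma(V)\}$, but this can fail when $0\in\sigma(V)$ is isolated, because then $0\notin\sigma_{\mathcal A_0}(|\lambda|)$ and $|s|-0$ need not lie in $\sigma_{\mathcal A_0}(|\lambda|)-\sigma_{\mathcal A_0}(|\lambda|)$. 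Your dichotomy on whether $0$ is an eigenvalue of $V$ correctly patches this.

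One small imprecision in that patch: in the subcase ``$0$ is an eigenvalue of $V$'' you invoke Remark \ref{pvc}(iii) to ``exhibit a nonzero kernel vector'' inside $\widetilde{\mathcal C}_{P_V}$. That remark produces an actual kernel element only when $|s|$ is an \emph{eigenvalue} of $|v|$ on $\ran P_v$, not merely a point of its spectrum, so it does not apply as stated. What you do have in general is that $|s|\in\sigma\bigl(|v|\big|_{\ran P_v}\bigr)$, hence $\sinhc(Tv)\big|_{\ran P_v}$ is not invertible, and therefore the $\widetilde{\mathcal C}_{P_V}$-block $L_S+R_S-1$ is not invertible; this suffices, since all you need for the OMT step is non-invertibility of $D(\Exp_P)_{TV}$, not a nontrivial kernel. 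With that wording adjusted, the proof is complete and slightly sharper than the one in the paper.
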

\begin{proof}
The case of $j=|k|$ is always possible, hence we have that at any $T=T(k,s,s')$ the operator $K$ contains the factor $L+R-\mu_k=L+R-|s-s'|$.  Now recall $\mathcal A_0$ is a $C^*$-algebra with identity (Remark \ref{muu}). If $\mathcal A_0$ is a von Neumann factor, or a  prime $C^*$-algebra, we have the equality of spectra
$$
\sigma(L\pm R)=\{|s_1|\pm |s_2|: s_1,s_2\in \sigma(V)\}
$$
(see Remark \ref{speo}). We can safely assumme that $s\ge s'$. If $ss'\ge 0$ and both are non-negative then $|s-s'|=s-s'=|s|-|s'|\in \sigma (L-R)$ thus $L-R-|s-s'|$ is not invertible in $\mathcal A_0$. Then it is not invertible in $(\mathcal A_0)_h$ (Remark \ref{complexifi}). Likewise, if both $s,s'$ are non-positive then again $|s-s'|=s'-s=|s|-|s'|\in \sigma(L-R)$. Further, if $ss'<0$ then $|s-s'|=s-s'=|s|+|s'|\in\sigma(L+R)$ and $L+R-|s-s'|$ is not invertible in $(\mathcal A_0)_{sk}$. Thus in any case $H$ or $K$ (or both) is not invertible. If $H,K$ are injective, all of their factors must be injective; since they are not invertible, they cannot be bounded below. Now $\|A\| \|B\xi\|\ge \|AB\xi\|\ge c\|\xi\|$ implies $B$ bounded below; since the factors in $H,K$ commute, either $H$ or $K$ are not bounded below. Since $H,K$ are a direct summands of $\sinhc(T \ad v)$, the later cannot be bounded below, and its range cannot be full. 
\end{proof}

We close the paper with a finite dimensional example where $Q=\gamma(T)$ for $T(k,s,s')$ \textit{is not} conjugate to $P$, except for the case of the points $T=\frac{k\pi}{2}$ already discussed in Remark \ref{qk}. This is unlike the classical Grassmannians $Gr_k(n)$ where they all are conjugate, and the main reason of failure is that $\mathcal A_0\simeq \mathbb C\oplus \mathbb C$ is not a factor.

\begin{ejem}\label{pocos} Let $\mathcal A=M_2(\mathbb C)\oplus M_2(\mathbb C)$, let
$$
P=\left(\begin{array}{cc} 1 & 0 \\ 0 & 0 \end{array}\right)\oplus \left(\begin{array}{cc} 1 & 0 \\ 0 & 0 \end{array}\right)\qquad \mathrm{ and }\quad V=\left(\begin{array}{cc} 0 & 1 \\ 1 & 0 \end{array}\right)\oplus \left(\begin{array}{cc} 0 & \alpha \\ \alpha & 0 \end{array}\right)
$$
for some $0<\alpha<1$. Then $V$ is $P$-codiagonal, $\sigma(V)=\{-1,-\alpha,\alpha,1\}$ and $P_V=1\oplus 1$ is the identity of $\mathcal A$. We also have 
$$
\mathcal A_0=\left(\begin{array}{cc} 0 & 0 \\ 0 & \mathbb C  \end{array}\right)\oplus \left(\begin{array}{cc} 0 & 0 \\ 0 & \mathbb C \end{array}\right), \quad |\lambda|=\left(\begin{array}{cc} 0 & 0 \\ 0 & 1  \end{array}\right)\oplus \left(\begin{array}{cc} 0 & 0 \\ 0 & \alpha \end{array}\right), 
$$
and the identity of $\mathcal A_0$ is of course $1-P$. Then $\sigma(|\lambda|)=\{1,\alpha\}$ but $L-R=0$ in $\mathcal A_0$ hence $\sigma(L-R)\subsetneq\{0,1-\alpha,\alpha-1\}$. On the other hand $L+R=2 \oplus 2\alpha$ in $\mathcal A_0$ hence $\sigma(L+R)=\{2,2\alpha\}$ again with strict inclusion in $\{2\alpha, 1+\alpha, 2\}$. There are four family of candidates to conjugate points,
$$
T_1=\frac{k\pi}{2},\quad T_2=\frac{k\pi}{1+\alpha},\quad T_3=\frac{k\pi}{1-\alpha}, \quad T_4=\frac{k\pi}{2\alpha}.
$$
For the first family we know that $\gamma(T_1)$ is conjugate to $P$ (Remark \ref{qk}), in fact monoconjugate because the algebra is finite dimensional. On the other hand it is easy to see that \textit{none of the other points are conjugate to $P$}: we only show that for the case of $T_2$, the other cases being similar. For this case one can check that the only possible value of $\mu_j$ is $\mu=1+\alpha>1$. Since $P_V=1$ all conjugate points occur inside $\mathcal A_0$ (Remark \ref{pvc}$.1$). Therefore we are only interested in 
$$
H=(L-R)^2-(\alpha +1)=-(\alpha+1)
$$
which is invertible and 
$$
K=L+R-(\alpha+1)=(1-\alpha)\oplus (\alpha-1)
$$
which is also invertible. Hence $\sinhc(T_2\ad v)$ is invertible and $\gamma(T_2)$ is not conjugate to $P$ along $\gamma$.
\end{ejem}

\end{document}